\newtheorem*{thmmA}{\bf Theorem A}
\newtheorem*{thmmB}{\bf Theorem B}
\newtheorem{thm}{Theorem}[section]
\newtheorem{lem}[thm]{Lemma}
\newtheorem{exm}[thm]{Example}
\newtheorem{conj}[thm]{Conjecture}
\newtheorem{prop}[thm]{Proposition}
\theoremstyle{definition}
\newtheorem{defn}[thm]{Definition}
\theoremstyle{remark}
\newtheorem{rem}[thm]{\bf Remark}
\newtheorem{nota}[thm]{\bf Notation}
\numberwithin{table}{section}
\newcommand{\xr}{\mathbb{R}}
\newcommand{\xh}{\mathbb{H}}
\newcommand{\xc}{\mathbb{C}}
\newcommand{\im}{\mathbf{i}}
\newcommand{\jm}{\mathbf{j}}
\newcommand{\km}{\mathbf{k}}
\newcommand{\lm}{\mathbf{l}}
\begin{document}
	
	\title{The locally complexified-gentle algebras}
	\author[J. Li]{Jie Li}
    \address{School of Mathematics, 
	Hefei University of Technology, Hefei, Anhui, PR China}
    \email{lijie0@hfut.edu.cn}

     \author[C. Zhang]{Chao Zhang}
     \address{Department of Mathematics,
     School of Mathematics and Statistics,
    Guizhou University,
   550025, Guiyang,
    PR China}
   \email{zhangc@amss.ac.cn}
	
	\begin{abstract}
		We call an $\xr$-algebra locally complexified-gentle if it becomes a locally gentle $\xc$-algebra up to Morita equivalence after complexification. We use modulated quivers to introduce two types of locally complexified-gentle algebras and show that they are Morita equivalent to some semilinear clannish algebras.
	\end{abstract}

\subjclass[2020]{Primary 16G10; Secondary 16G20}

\keywords{complexified-gentle $\xr$-algebra, modulated quiver, semilinear clannish algebra.}
    
	\maketitle
	
	\section{Introduction}
	Gentle algebras were introduced by Assem and Skowro\'{n}ski \cite{AS1987}, which are described by certain combinatorial conditions on quivers and ideals of their path algebras; see Definition~\ref{usual defn}. Research of these algebras and their generalizations has become highly active since they appear in many other subjects; see \cite{GP1999, LD2009}.
	
	In representation theory, gentle algebras are well-studied. Indecomposable objects in both module and derived categories of gentle algebras were classified; see \cite{BR1987} for module categories and \cite{BD2017, BM2003} for derived categories. Some homological properties are shown; see \cite{ALP2016, CB1989, GR2005, KM2015, KY2018}. Recently, research on geometric models for gentle algebras and their relation with other subjects has attracted much attention; see \cite{ABCP2010, BC2021, BC2023, HKK2017, HZZ2023, OPS2018, QZZ2022}.
	
	Gentle algebras were originally defined over algebraically closed fields, and it is natural to explore such ``gentle" algebras over more general fields, such as the real number field. Base field change is a well-studied approach in representation theory. Many properties are found to be compatible with certain field extensions, such as homological dimensions \cite{ERZ1957, JL1982} and the representation type \cite{JL1982, KS2001, LZ2022}. Our motivation is to study and describe algebras that become gentle algebras after certain base field extensions. We expect these algebras to have properties similar to gentle algebras. We can also use descent theory to generalize the known results of gentle algebras. 
	
	\medskip
	
	In this article, we focus on the extension from the real number field $\xr$ to the complex number field $\xc$. We call an $\xr$-algebra \textbf{(locally) complexified-gentle} if it becomes a (locally) gentle $\xc$-algebra up to Morita equivalence after complexification; see Definition \ref{defn of cmp-gen alg}. We show that (locally) complexified-gentle algebras have many properties similar to those of (locally) gentle algebras: they are of generically tame representation type and Gorenstein; see Proposition \ref{prop of cmp-gen alg}.
	
	By Gabriel's theorem, when the base field is algebraically closed, say $\xc$, each finite-dimensional algebra has a quiver presentation. In this case, the definition of gentle algebras works properly. However, if we consider a perfect but not algebraically closed base field, say $\xr$, we need modulated quivers to include more algebras; see \cite{DR1976}. Hence, we study locally complexified-gentle algebras in the form of $T(Q,\mathcal{M})/I$, where $Q$ is a quiver with modulation $\mathcal{M}$ and $I$ is some ideal of the tensor algebra $T(Q,\mathcal{M})$; see Section 2.1. 
	
	To describe some complexified-gentle algebras $T(Q,\mathcal{M})/I$, we first define gentle vertices in $T(Q,\mathcal{M})/I$ locally. They are ordinarily gentle vertices (with $\xr$, $\xh$ or $\xc$) and specially gentle vertices; see Section~3.2. Next, we give two types of locally complexified-gentle algebras. 
	
	\begin{thmmA}[Theorem \ref{main}]
		Let $T(Q,\mathcal{M})/I$ be an $\xr$-algebra satisfying one of the following conditions: \begin{enumerate}
			\item each vertex is ordinarily gentle with $\xr$ (or with $\xh$) in $T(Q,\mathcal{M})/I$ and the ideal $I$ is generated by $I_v, v\in Q_0$,
			\item each vertex is ordinarily gentle with $\xc$ or specially gentle in $T(Q,\mathcal{M})/I$ and the ideal $I$ is generated by $I_v, v\in Q_0$,
		\end{enumerate} where $I_v$ is some bimodule associated with those arrows staring or ending at $v$. Then $T(Q,\mathcal{M})/I$ is locally complexified-gentle.
	\end{thmmA}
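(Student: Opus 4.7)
My plan is to apply the complexification functor $\mathbb{C}\otimes_{\mathbb{R}}-$ to $T(Q,\mathcal{M})/I$ and exhibit, after Morita reduction, a presentation as a locally gentle $\mathbb{C}$-algebra. Because the ideal $I$ is generated by the local pieces $I_v$, and because both complexification and Morita reduction respect the decomposition across vertices, the argument proceeds locally, in parallel with the local definition of gentle vertices in Section~3.2. The overall strategy is to build the complexified quiver and ideal vertex by vertex, then check the combinatorial gentle conditions at each newly produced vertex.

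First I would describe the vertex set of the complexified quiver. At each vertex $v$, the residue division algebra $D_v \in \{\mathbb{R},\mathbb{C},\mathbb{H}\}$ becomes, after complexification, one of $\mathbb{C}$, $\mathbb{C}\times\mathbb{C}$, or $M_2(\mathbb{C})$. Since $M_2(\mathbb{C})$ is Morita equivalent to $\mathbb{C}$, vertices of types $\mathbb{R}$ and $\mathbb{H}$ each contribute a single vertex in the Morita-reduced algebra, while vertices of type $\mathbb{C}$ split into two. This dichotomy is precisely what explains the separation of the theorem into cases~(1) and~(2): in case~(1) the vertex set is preserved, whereas in case~(2) vertices genuinely split and arrows must redistribute themselves among the fragments.

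Next I would analyze how arrows transform. Each arrow corresponds to a bimodule over the division algebras at its source and target; after tensoring with $\mathbb{C}$ over $\mathbb{R}$, such a bimodule decomposes into a direct sum of $\mathbb{C}$-bimodules between the new vertices, producing a collection of $\mathbb{C}$-arrows in the new quiver. I would carry out this decomposition case by case, using the explicit local description of each vertex type from Section~3.2. Then I would argue that $\mathbb{C}\otimes_{\mathbb{R}}I$ is generated by $\mathbb{C}\otimes_{\mathbb{R}}I_v$ (this follows from the hypothesis on the generation of $I$), and verify the gentle conditions locally at each new vertex: at most two arrows in and two out, unique continuation inside and outside the ideal, and relations generated by zero paths of length two. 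For case~(1) this bookkeeping is essentially formal, since the Morita reduction from $M_2(\mathbb{C})$ to $\mathbb{C}$ merely relabels arrows without altering their combinatorics.

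The main obstacle will be the analysis for specially gentle vertices in case~(2), where the bimodule structure of the incident arrows is subtler and the splitting under complexification genuinely redistributes arrows between the two copies. Here I expect to need a careful matching argument showing that, after splitting, every pair of composable arrows at the new vertices satisfies the gentle dichotomy (either composes to zero uniquely or continues uniquely outside $I$), and this match must be compatible with the way neighboring vertices split. Identifying why the hypotheses of cases~(1) and~(2) cannot be combined—that is, locating the combinatorial obstruction that forbids an $\mathbb{R}$- or $\mathbb{H}$-type vertex from being adjacent to a specially gentle one—will serve as a useful sanity check that the vertex classification is exhaustive and the local conditions paste consistently.
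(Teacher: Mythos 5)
Your overall scaffolding --- complexify, Morita-reduce $M_2(\xc)$ and $\xc\times\xc$ down to $\xc$-vertices, decompose the arrow bimodules, and then check the gentle conditions vertex by vertex --- is the same skeleton the paper uses (via Proposition~\ref{ios after complexification} and the local analysis of Proposition~\ref{gentle-ver}). For condition~(1) this is indeed essentially formal, as you say. But for the specially gentle vertices your plan stops exactly where the real difficulty begins, and the step you describe as ``a careful matching argument showing that every pair of composable arrows satisfies the gentle dichotomy'' would fail as stated: in the natural arrow basis of the complexified quiver, the images of the local relation bimodules $I_v^0$ and $I_v^1$ are \emph{not} spanned by paths. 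For instance, in the case $\xr\xr\xr$ the generator of $\Psi(\mathbf{e}(I_v^0\otimes\xc)\mathbf{e})$ is $(\alpha+\im\alpha^\im)(\beta+\im\beta^\im)$, and in the case $\xh\xr\xh$ one gets combinations like $\alpha\beta+\overline{\alpha}\overline{\beta}$ (this is the skew-gentle phenomenon of Example~\ref{skgen exm}). So no individual composition $\alpha\beta$ lies in $J$, the ideal $J$ is not monomial, and conditions (G2)--(G3) simply do not hold for the naive presentation $\xc\Gamma/J$. The missing idea is the explicit $\xc$-algebra automorphism $d$ of $\xc\Gamma$ sending $\gamma+\im\gamma^\im\mapsto\gamma$ and $\gamma-\im\gamma^\im\mapsto\gamma^\im$ on each parallel pair attached to a specially gentle vertex; only after replacing $J$ by $d(J)$ do the relations become paths of length two and the gentle dichotomy become checkable. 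This in turn requires (a) the case-by-case computation of $\Psi$ on the generators $r_0,r_1$ (the content of Table~\ref{psi(ab)}), and (b) a well-definedness check that the local changes of basis prescribed at the two endpoints of a parallel pair agree --- which works precisely because both endpoints of such a pair are specially gentle.

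A smaller inaccuracy: you tie the dichotomy between cases~(1) and~(2) to whether vertices split, but in case~(2) the specially gentle vertices are modulated by $\xr$ or $\xh$ and do \emph{not} split; only the ordinarily-gentle-with-$\xc$ vertices produce two fibers. What distinguishes the specially gentle vertices is arrow doubling (a single $\xh$- or $\xc$-valued bimodule arrow producing two parallel $\xc$-arrows, or an existing parallel pair $\gamma,\gamma^\im$ surviving), and it is this doubling, not vertex splitting, that forces the change of basis above. Getting this right also matters for your ``sanity check'' about why the two hypotheses cannot be mixed, which the paper addresses separately in Proposition~\ref{notgentle}.
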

	
	Those algebras that satisfy condition (1) are called \textbf{locally complexified-gentle of uniform type} with $\xr$ (or with $\xh$). Those satisfying condition (2) are called \textbf{locally complexified-gentle of special type}.
	
	We conjecture that these two types of locally complexified-gentle algebras are all locally complexified-gentle algebras up to Morita equivalence. Evidence and examples are given in Section 3.3.
	\medskip
	
	Recently, Bennett-Tennenhaus and Crawley-Boevey have introduced in \cite{BC2024} semilinear clannish algebras over division rings and described their finite-dimensional indecomposable representations under some mind conditions. If we take the base ring to be $\xc$, then a $\xc$-semilinear clannish algebra is an $\xr$-algebra. To some extent, semilinear algebras provide us a way to explore algebras over more general fields. In this article, we consider the relation between semilinear clannish algebras and locally complexified-gentle algebras.

	Locally complexified-gentle algebras of uniform type are exactly gentle algebras over $\xr$ or $\xh$, and thus are semilinear clannish. By \cite{BC2024}, their finite-dimensional indecomposable representations can be classified directly.
	
	Representations of locally complexified-gentle algebras of special type seem complicated to describe by modulated quivers; see \cite{DR1976, DR1978}. For these algebras, we study a special kind of $\xc$-semilinear clannish algebras, called \textbf{$\xc$-semilinear clannish algebras of gentle type}; see Definition~\ref{csca of gentle}. By giving their modulated quiver presentations, we prove our second main result:
	
	\begin{thmmB}[Theorem \ref{main 2}]
		An $\xr$-algebra is locally complexified-gentle of special type if and only if it is Morita equivalent to some $\xc$-semilinear clannish algebra of gentle type. Therefore, locally complexified-gentle algebras are semilinear clannish algebras up to Morita equivalence.
	\end{thmmB}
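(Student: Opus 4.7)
The plan is to prove the biconditional by exhibiting an explicit back-and-forth translation between modulated-quiver presentations satisfying condition (2) of Theorem A and semilinear-quiver presentations of $\xc$-semilinear clannish algebras of gentle type, and then to verify Morita equivalence by comparing their basic algebras locally at each vertex.

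For the ``only if'' direction, let $A = T(Q,\mathcal{M})/I$ be locally complexified-gentle of special type, so every vertex of $Q$ is either ordinarily gentle with $\xc$ or specially gentle, and $I$ is generated by the local pieces $I_v$. I would build a $\xc$-semilinear quiver $(Q',\sigma)$ as follows: ordinarily gentle vertices with $\xc$ contribute $\xc$-vertices on which $\sigma$ acts trivially on the incident arrows, while specially gentle vertices contribute vertices on which $\sigma$ acts by complex conjugation on the incident arrows; the arrow set of $Q'$ is obtained from a carefully chosen $\xr$-basis of the $\xc$-bimodules assembling $\mathcal{M}$. Each local relation $I_v$ then transports to a clannish relation of gentle type at the corresponding vertex of $(Q',\sigma)$. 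Conversely, given a $\xc$-semilinear clannish algebra $B$ of gentle type, I would invert this procedure: vertices of $Q'$ where $\sigma$ is trivial on the incident arrows are declared ordinarily gentle with $\xc$, vertices where $\sigma$ acts nontrivially are declared specially gentle, and the $\xr$-bimodule attached to each arrow is reconstructed from the $\xc$-action twisted by $\sigma$. The clannish relations assemble into an ideal of the desired form $\langle I_v \rangle$. The final assertion of the theorem follows by combining this equivalence with the already-established fact (as noted in the paragraph preceding Theorem B using \cite{BC2024}) that locally complexified-gentle algebras of uniform type, being gentle algebras over $\xr$ or $\xh$, are themselves semilinear clannish.

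The main obstacle I expect lies at the specially gentle vertices, where the $\xr$-bimodule attached by $\mathcal{M}$ is a nonsplit two-dimensional real form of a $\xc$-bimodule. Recovering a canonical semilinear automorphism from this bimodule requires a choice of $\xr$-basis compatible with the relations in $I_v$, and a change of basis corresponds to an inner twist that must be absorbed into the Morita equivalence rather than producing a genuinely different semilinear algebra. The delicate bookkeeping is to check that the correspondence between $I_v$ and the gentle-type clannish relations is exact---no relation is lost or spuriously introduced---and that passage to a basic algebra commutes uniformly with the translation across all vertices, so that the Morita equivalence at the level of full algebras is a consequence of the local matching.
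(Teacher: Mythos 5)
There is a genuine gap in your translation at the specially gentle vertices, and it is the heart of the theorem. You propose to encode a specially gentle vertex $v$ (where $\mathcal{M}(v)=\xr$ or $\xh$) as a $\xc$-vertex of the semilinear quiver on whose \emph{incident arrows} $\sigma$ acts by conjugation, with no further structure. But a $\xc$-semilinear clannish algebra of gentle type is by definition $\xc_\sigma Q/\langle S\cup Z\rangle$ where the vertex types are distinguished by \emph{special loops} $s_v$ with $\sigma_{s_v}=\overline{(?)}$ and quadratic relations $s_v^2=\pm e_v$; these make the local ring $A_v\simeq \xc[x;\overline{(?)}]/\langle x^2\mp1\rangle\simeq M_2(\xr)$ or $\xh$. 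Under your scheme the local ring at $v$ stays $\xc e_v\simeq\xc$, so after complexification $v$ splits into two vertices, whereas a specially gentle vertex with $\mathcal{M}(v)\in\{\xr,\xh\}$ has a single fiber in the complexified quiver $\Gamma$. The complexifications therefore cannot be Morita equivalent, and no choice of $\xr$-basis or ``inner twist'' absorbs this: twisting ordinary arrows by conjugation is (as the paper's Remark after Table~\ref{phi(ab)} shows) always removable by composing with powers of special loops, so it carries no Morita-invariant information at all. The paper's construction of $Q^{\mathrm{s}}$ instead attaches a special loop $s_v$ to every vertex with $\mathcal{M}(v)\neq\xc$, collapses each pair of parallel arrows $\gamma,\gamma^\im$ of $Q$ into a single ordinary arrow, and records which summand $I_v^0$ or $I_v^1$ of $\bigoplus\mathcal{M}(\alpha\beta)$ lies in $I$ either by inserting $s_v^p$ into the relation ($Z'$) or by a twist on the arrow ($Z$); your proposal is also silent on this arrow-collapsing and on the $I_v^0$-versus-$I_v^1$ dichotomy.

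A second, smaller issue: the claimed Morita equivalences are not mere bookkeeping. The paper needs the explicit isomorphism $\Phi\colon\epsilon\,\xc_\sigma Q/\langle S\rangle\,\epsilon\simeq T(Q^{\mathrm{b}},\mathcal{M})$ of Theorem~\ref{CQS and TQM} (built from the idempotents $\epsilon_i=(\lceil e_i\rceil+\lceil s_i\rceil)/2$ at $M_2(\xr)$-vertices and the case-by-case bimodule identifications) together with the computations of $\Phi(\epsilon\lceil\alpha\beta\rceil\epsilon)$ and $\Phi(\epsilon\lceil\alpha s\beta\rceil\epsilon)$ in Table~\ref{phi(ab)}, precisely to verify that the ideal generated by the clannish relations matches $\langle I_v\rangle$ bimodule-for-bimodule. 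Your final paragraph correctly identifies that this matching is the delicate point, but the verification cannot even begin until the dictionary uses special loops rather than conjugation twists on incident arrows.
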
 
	
	We explain why we choose the field extension $\xc/\xr$. Firstly, over $\xr$, all division rings and simple bimodules over these rings are clear and provide typical situations which may occur in other field extensions. Moreover, the complexificaion of an $\xr$-algebra is clear in aspect of (modulated) quiver presentations; see \cite{LJ2023}. There are many relative studies; see \cite{DR1976, DR1978}. Finally, the Galois group of $\xc/\xr$ is $C_2$, the cyclic group of order $2$, which is closely related to gentle algebras. For example, a skew-gentle algebra is Morita equivalent to a skew-group algebra $C_2* A$, where $A$ is some gentle algebra; see \cite{GP1999}.
	
	The article is organized as follows. 
	
	In Section~2, we recall the modulated quiver presentation $T(Q,\mathcal{M})/I$ of $\xr$-algebras and the complexified quiver presentation $\xc\Gamma/J$ of $T(Q,\mathcal{M})/I\otimes_{\xr}\xc$. 
	
	In Section~3.1, we define locally complexified-gentle algebras and prove some properties. In Section~3.2, we define gentle vertices in $T(Q,\mathcal{M})/I$ and prove that they can be gentle in $\xc\Gamma/J$. Then, we give two types of locally complexified-gentle algebras in Section~3.3 using gentle vertices, and Theorem A is proved. In Section~3.4, we give some examples of complexified-gentle algebras and exclude some examples that are not complexified-gentle. 
	
	We consider in Section~4.1 algebras $\xc_\sigma Q/\langle S\rangle$, certain quotients of $\xc$-semilinear path algebras and give their modulated quiver presentation $T(Q^\mathrm{b},\mathcal{M})$ in Section~4.2. In Section~4.3, we introduce $\xc$-semilinear clannish algebras of gentle type $\xc_\sigma Q/\langle S\cup Z\rangle$ and describe their presentations $T(Q^\mathrm{b},\mathcal{M})/I$. In Section 4.4 we prove Theorem B. In Section 4.5, we explain how to classify finite-dimensional indecomposable representations of locally complexified-gentle algebras of uniform or special type.

	\section{The \texorpdfstring{$\xr$}{}-algebras and their complexifications}
	
	In this section, we recall some knowledge about $\xr$-algebras, their modulated quiver presentations, their complexifications and complexified quiver presentations.

	\subsection{Tensor algebras of modulated quivers over \texorpdfstring{$\xr$}{}}
	
	Let $Q=(Q_0,Q_1,s,t)$ be a finite quiver, where $Q_0$ is a finite set of vertices, $Q_1$ is a finite set of arrows, $s\colon Q_1\rightarrow Q_0$ maps an arrow to its starting vertex, and $t\colon Q_1\rightarrow Q_0$ maps an arrow to its terminal vertex.
	
	By the Frobenius theorem, the (isomorphism classes of) finite-dimensional division rings over the real number field are only the field $\mathbb{R}$ itself, the field $\mathbb{C}$ of complex numbers and the quaternion algebra $\mathbb{H}$ over $\xr$. Denote the conjugate of $c\in\xc$ by $\overline{c}$. We view $\mathbb{H}=\xr 1_{\xh}\oplus\xr\jm\oplus\xr\km\oplus\xr\lm$ as $$\{\begin{bmatrix} a & b \\ -\overline{b} & \overline{a} \end{bmatrix}
	|a,b\in \mathbb{C} \},$$ an $\mathbb{R}$-subalgebra of $M_2(\mathbb{C})$ (the full matrix algebra of the $2\times2$ matrices over $\mathbb{C}$) by identifying $1_\xh$ as $\begin{bmatrix} 1 & 0 \\ 0 & 1 \end{bmatrix}$, $\jm$ as $\begin{bmatrix} \im & 0 \\ 0 & -\im \end{bmatrix}$, $\km$ as $\begin{bmatrix} 0 & 1 \\  -1 & 0 \end{bmatrix}$ and $\lm$ as $\begin{bmatrix} 0 & \im \\  \im& 0 \end{bmatrix}$. The bimodules between these division rings up to isomorphisms are clear; see the construction of $\Gamma_1$ in Section 2.2 or \cite[Section 2]{LJ2023} for details. In this paper, bimodule structures on matrices are given by multiplication of matrices.
	
	A \textbf{modulation} $\mathcal{M}$ of $Q$ (over $\xr$) is a map given below: for each vertex $i$, the image $\mathcal{M}(i)$ is a division ring $\xr$, $\xh$ or $\xc$; for each arrow $\alpha$, the image $\mathcal{M}(\alpha)$ is one of the simple $\mathcal{M}(t(\alpha))$-$\mathcal{M}(s(\alpha))$-bimodules listed in Section 2.2; see Remark \ref{all sim bim} or \cite[Section~2]{LJ2023}. In this case, we call $(Q,\mathcal{M})$ a \textbf{modulated quiver} (over $\xr$). 
	
	\begin{nota}
		We write the modulation on a quiver to denote a modulated quiver. For example, the modulated quiver $(\xymatrix@C=4ex@R=-1ex{u\ar@<.4ex>[r]^{\alpha}\ar@<-.4ex>[r]_{\beta} & v}, \, \mathcal{M} :u\mapsto \xr, v\mapsto \xc, \alpha\mapsto \xc,\beta\mapsto\xc)$ can be denoted by $\xymatrix@C=6ex@R=-1ex{\xr_u\ar@<.4ex>[r]^{\xc_\alpha,\,\xc_{\beta}}\ar@<-.4ex>[r]& \xc_v}$, by $\xymatrix@C=6ex@R=-1ex{\xr\ar@<.4ex>[r]^{\xc_\alpha,\,\xc}\ar@<-.4ex>[r]& \xc_v}$ if we only care about vertex $v$ and arrow $\alpha$, or simply by $\xymatrix@C=5ex@R=-1ex{\xr\ar@<.4ex>[r]^{\xc,\,\xc}\ar@<-.4ex>[r]& \xc}$.
	\end{nota}

    \medskip
	The tensor algebra of a modulated quiver $(Q,\mathcal{M})$ is the tensor algebra of the bimodule $\underset{\alpha\in Q_1}{\oplus}\mathcal{M}(\alpha)$ over the ring $\underset{i\in Q_0}{\prod}\mathcal{M}(i)$, where the bimodule structure is compatible with the quiver structure. We denote it by $$T(Q,\mathcal{M}):=(\underset{i\in Q_0}{\prod}\mathcal{M}(i))\oplus(\underset{\alpha\in Q_1}{\oplus}\mathcal{M}(\alpha))\oplus(\underset{\alpha\in Q_1}{\oplus}\mathcal{M}(\alpha))^{\otimes2}\oplus\cdots.$$
	
	For a path $p=\alpha_n\alpha_{n-1}\cdots\alpha_1$ in $Q$, set $$\mathcal{M}(p):=\mathcal{M}(\alpha_n)\otimes_{\mathcal{M}(s(\alpha_n))}\mathcal{M}(\alpha_{n-1})\otimes_{\mathcal{M}(s(\alpha_{n-1}))}\cdots\otimes_{\mathcal{M}(s(\alpha_2))}\mathcal{M}(\alpha_1).$$ 
	
	\medskip
	\subsection{Complexified quiver presentations}
	In this subsection, we recall some notation from \cite{LJ2023}. 
    
    Given a modulated quiver $(Q,\mathcal{M})$, its \textbf{complexified quiver} $\Gamma=(\Gamma_0,\Gamma_1,s,t)$ is constructed as follows.

	\textbf{Vertices of $\Gamma$}:
	\begin{enumerate}
		\item Each $i\in Q_0$ with $\mathcal{M}(i)=\mathbb{R}$ or $\mathbb{H}$ gives a vertex $i$ in $\Gamma_0$.
		\item Each $i\in Q_0$ with $\mathcal{M}(i)=\mathbb{C}$ gives two vertices $i$ and $\overline{i}$ in $\Gamma_0$.
	\end{enumerate}
	
	\textbf{Arrows of $\Gamma$}: Let $\alpha:i\rightarrow j$ be an arrow in $Q_1$, modulated with a simple bimodule $\mathcal{M}(\alpha)=S$.
	\begin{enumerate}
		\item If $S=\xr\in\mathbb{R}\mbox{-}\mathbb{R}\mbox{-mod}$, it gives $\alpha:i\rightarrow j$ in $\Gamma_1$.
		\item If $S=\xh\in\mathbb{H}\mbox{-}\mathbb{H}\mbox{-mod}$, it gives $\alpha:i\rightarrow j$ in $\Gamma_1$.
		\item If $S=\xc\in\mathbb{R}\mbox{-}\mathbb{C}\mbox{-mod}$, it gives $\alpha:i\rightarrow j$ and $\overline{\alpha}:\overline{i}\rightarrow j$ in $\Gamma_1$.
		\item If $S=\xc\in\mathbb{C}\mbox{-}\mathbb{R}\mbox{-mod}$, it gives $\alpha:i\rightarrow j$ and $\overline{\alpha}:i\rightarrow\overline{j}$ in $\Gamma_1$. \item If $S=\xh\in\mathbb{R}\mbox{-}\mathbb{H}\mbox{-mod}$, it gives $\alpha:i\rightarrow j$ and $\overline{\alpha}:i\rightarrow j$ in $\Gamma_1$.
		\item If $S=\xh\in\mathbb{H}\mbox{-}\mathbb{R}\mbox{-mod}$, it gives $\alpha:i\rightarrow j$ and $\overline{\alpha}:i\rightarrow j$ in $\Gamma_1$.
		\item If $S=\begin{bmatrix}
			\xc\\ \xc
		\end{bmatrix}\in\mathbb{H}\mbox{-}\mathbb{C}\mbox{-mod}$, it gives $\alpha:i\rightarrow j$ and $\overline{\alpha}:\overline{i}\rightarrow j$ in $\Gamma_1$.
		\item If $S=\begin{bmatrix}
			\xc& \hspace{-2mm}\xc
		\end{bmatrix}\in\mathbb{C}\mbox{-}\mathbb{H}\mbox{-mod}$, it gives $\alpha:i\rightarrow j$ and $\overline{\alpha}:i\rightarrow\overline{j}$ in $\Gamma_1$.
		\item If $S=\mathbb{C}\in\mathbb{C}\mbox{-}\mathbb{C}\mbox{-mod}$, it gives $\alpha:i\rightarrow j$ and $\overline{\alpha}:\overline{i}\rightarrow\overline{j}$ in $\Gamma_1$.
		\item If $S=\overline{\mathbb{C}}\in\mathbb{C}\mbox{-}\mathbb{C}\mbox{-mod}$, it gives $\alpha:\overline{i}\rightarrow j$ and $\overline{\alpha}:i\rightarrow\overline{j}$ in $\Gamma_1$.
	\end{enumerate}
	
	\begin{rem}\label{all sim bim}
		All the simple bimodules over division rings up to isomorphisms are listed above. To see (7) and (8), notice that $\xh\otimes_{\xr}\xc\overset{\theta}{\simeq}M_2(\xc)$; see Section 2.3.
	\end{rem}
	
	The quiver $\Gamma=(\Gamma_0,\Gamma_1,s,t)$ is well-defined and there is an \textbf{automorphism} $\tau$ of $\Gamma$ defined as follows.
	\begin{enumerate}
		\item  For each $i$ in $\Gamma_0$ given by $i\in Q_0$, if $\overline{i}$ exists, $\tau(i)=\overline{i}$ and $\tau(\overline{i})=i$; if not, $\tau(i)=i$.
		\item  For each $\alpha$ in $\Gamma_1$ given by $\alpha\in Q_1$, if $\overline{\alpha}$ exists, $\tau(\alpha)=\overline{\alpha}$ and $\tau(\overline{\alpha})=\alpha$; if not, $\tau(\alpha)=\alpha$.
	\end{enumerate}

	We also have a surjection $$\pi\colon\Gamma\rightarrow Q,$$ where $\pi(x)=x=\pi(\overline{x})$ (if $\overline{x}$ exists), $\forall x\in Q_0\cup Q_1$. We say that $x$ in $\Gamma_0\cup \Gamma_1$ is a \textbf{fiber} of $y$ in $Q_0\cup Q_1$ if $\pi(x)=y$. A path $p'$ in $\Gamma$ is called a \textbf{fiber} of a path $p$ in $Q$ if the arrows in $p'$ are fibers of those in $p$ in orders. 
	
	We call $\Gamma$ the complexified quiver of $(Q,\mathcal{M})$ because of the following results. Here $\xc\Gamma$ is the path algebra of $\Gamma$ over $\xc$. For convenience, we identify $\xc\Gamma$ with the tensor algebra $T(\prod_{i\in\Gamma_0}\xc e_i, \bigoplus_{\alpha\in\Gamma_1}\xc \alpha)$ via the canonical isomorphism between them, where $e_i$ denotes the trivial path corresponding to $i\in\Gamma_0$.
	
	\begin{prop}\cite[Proposition 3.5, Remark 3.6]{LJ2023}\label{ios after complexification}
		Let $\Gamma$ be the complexified quiver of a modulated quiver $(Q,\mathcal{M})$. Then there is an isomorphism of $\xc$-algebras
		$$\Psi\colon \mathbf{e}(T(Q, \mathcal{M})\otimes_{\xr}\xc)\mathbf{e}\longrightarrow\xc \Gamma,$$ where $\mathbf{e}$ is a full idempotent of $T(Q, \mathcal{M})\otimes_{\xr}\xc$. Moreover, for each path $p$ in $Q$, $$\Psi( \mathbf{e}(\mathcal{M}(p)\otimes\xc)\mathbf{e})=\oplus_{i=1}^{n}\xc q_i,$$ where $q_1,\dots,q_n$ are all the paths in $\Gamma$ that are fibers of $p$.
	\end{prop}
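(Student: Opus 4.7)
The plan is to construct the idempotent $\mathbf{e}$ and the map $\Psi$ by working in the natural grading of the tensor algebra: first at vertices (degree $0$), then at arrows (degree $1$), and finally extending multiplicatively. The key inputs are the classification of simple bimodules recalled in Section~2.2 and the three elementary isomorphisms
$$\xr\otimes_{\xr}\xc\cong\xc,\qquad \xc\otimes_{\xr}\xc\cong\xc\times\xc,\qquad \xh\otimes_{\xr}\xc\overset{\theta}{\cong}M_2(\xc),$$
the last being the map $\theta$ highlighted in Remark~\ref{all sim bim}.

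For each $i\in Q_0$ I would define a local idempotent $\mathbf{e}_i\in\mathcal{M}(i)\otimes_{\xr}\xc$ by: $\mathbf{e}_i=1\otimes 1$ if $\mathcal{M}(i)=\xr$; $\mathbf{e}_i=e_i+e_{\bar i}$ (the two primitive idempotents of $\xc\times\xc$) if $\mathcal{M}(i)=\xc$; and $\mathbf{e}_i=\theta^{-1}(E_{11})$ if $\mathcal{M}(i)=\xh$. Then $\mathbf{e}:=\sum_{i\in Q_0}\mathbf{e}_i$ is a full idempotent since each $\mathbf{e}_i$ is full in its factor, and $\mathbf{e}_i(\mathcal{M}(i)\otimes_{\xr}\xc)\mathbf{e}_i$ is a product of copies of $\xc$ indexed exactly by the fibers of $i$ in $\Gamma_0$. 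This furnishes the degree-$0$ identification and determines the trivial-path images of $\Psi$.

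For degree $1$, I would run through the ten bimodule cases (1)--(10) that define $\Gamma_1$. In each case one computes $\mathbf{e}_j(\mathcal{M}(\alpha)\otimes_{\xr}\xc)\mathbf{e}_i$ as a bimodule over the appropriate products of copies of $\xc$ and checks that it splits as a direct sum of one-dimensional $\xc$-$\xc$-bimodules, one for each fiber of $\alpha$. For instance, in case~(3) the complexification $\xc\otimes_{\xr}\xc\cong\xc\oplus\overline{\xc}$ as $\xc$-$(\xc\times\xc)$-bimodules, and the two idempotent cuts on the right-hand side recover the arrows $\alpha$ and $\overline{\alpha}$; in case~(7) the bimodule $\begin{bmatrix}\xc\\ \xc\end{bmatrix}\otimes_{\xr}\xc$, viewed through $\theta$, restricts via $E_{11}$ on the left and the two primitive idempotents on the right to two one-dimensional pieces matching the arrows in $\Gamma_1$. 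Once each case has been verified, the universal property of the tensor algebra $\xc\Gamma$ promotes these vertex- and arrow-identifications to a homomorphism $\Psi\colon\xc\Gamma\to\mathbf{e}(T(Q,\mathcal{M})\otimes_{\xr}\xc)\mathbf{e}$; surjectivity is automatic (generators are hit) and injectivity follows by counting $\xc$-dimensions fiber by fiber. The moreover-statement is then proved by induction on the length of $p=\alpha_n\cdots\alpha_1$: at each intermediate vertex $s(\alpha_k)$ the idempotent $\mathbf{e}_{s(\alpha_k)}$ decomposes as the sum of primitive idempotents indexed by fibers of $s(\alpha_k)$, and associativity of the tensor product produces a splitting of $\mathbf{e}(\mathcal{M}(p)\otimes\xc)\mathbf{e}$ enumerated precisely by all choices of fibers at each intermediate vertex, i.e.\ by all paths $q_i$ in $\Gamma$ above $p$.

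The main obstacle is the consistent bookkeeping around $\xh$-vertices, where one uses a Morita equivalence (through $\theta$) rather than a ring isomorphism. One must verify that the single column idempotent $E_{11}$ is compatible on both the source and target sides of every arrow of types (2), (5)--(8), and that the resulting splittings of $\mathcal{M}(\alpha)\otimes_{\xr}\xc$ agree with (and do not over- or under-count) the fibers prescribed in the definition of $\Gamma_1$. This reduces to a short analysis of the primitive idempotent structure of $M_2(\xc)\otimes M_2(\xc)$ and of $M_2(\xc)\otimes(\xc\times\xc)$; once this is checked, everything else is bookkeeping.
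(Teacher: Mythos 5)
Your construction is correct and follows essentially the same route the paper (via \cite{LJ2023} and the recollection in Section~2.3) takes: the same idempotent $\mathbf{e}$ (your $\theta^{-1}(E_{11})$ is exactly the paper's $(1_{\xh}\otimes 1-\jm\otimes\im)/2$), the same case-by-case splitting of $\mathbf{e}(\mathcal{M}(\alpha)\otimes_{\xr}\xc)\mathbf{e}$ into lines indexed by fibers, and the same reduction of the $\xh$-corner compatibility to the idempotent structure of $M_2(\xc)$, which is precisely the content of Lemma~\ref{map 2} ($\psi_2$). The only cosmetic difference is that you build $\Psi$ out of $\xc\Gamma$ by the universal property of the tensor algebra rather than as the explicit composite $\psi_3\circ\psi_2\circ\theta\circ\psi_1$.
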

	
	In Section 2.3, we briefly recall the isomorphism $\Psi$ and the idempotent $\mathbf{e}$ for the reader's convenience; one can see \cite{LJ2023} for details. 
	
	We fix the isomorphism $\Psi$ above. For each ideal $I$ of $T(Q,\mathcal{M})$, set $$J:=\Psi(\mathbf{e}(I\otimes_{\mathbb{R}}\xc)\mathbf{e})\subset\xc \Gamma.$$ Then $J$ is an ideal of $\xc \Gamma$. Hence, we have 
	
	\begin{thm}\cite[Theorem 3.7]{LJ2023}
		The algebra $T(Q, \mathcal{M})/I\otimes_{\mathbb{R}}\xc$ is Morita equivalent to $\xc \Gamma/J$. Moreover, $I\subseteq (\underset{\alpha\in Q_1}{\bigoplus}\mathcal{M}(\alpha))^{\otimes 2}$ if and only if $J\subseteq (\underset{\alpha\in \Gamma_1}{\bigoplus}\xc\alpha)^{\otimes 2}$, and $I$ is an admissible ideal if and only if so is $J$.
	\end{thm}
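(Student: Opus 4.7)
The plan is to chain together three natural operations that carry $T(Q,\mathcal{M})/I$ over to $\xc\Gamma/J$: flat base change along $\xc/\xr$, Morita reduction via the full idempotent $\mathbf{e}$, and the algebra isomorphism $\Psi$ supplied by Proposition \ref{ios after complexification}. First, since $\xc$ is free (hence flat) over $\xr$, the canonical map gives $T(Q,\mathcal{M})/I\otimes_{\xr}\xc\cong (T(Q,\mathcal{M})\otimes_{\xr}\xc)/(I\otimes_{\xr}\xc)$. Next, as $\mathbf{e}$ is full in $T(Q,\mathcal{M})\otimes_{\xr}\xc$, restriction to the corner yields a Morita equivalence with $\mathbf{e}(T(Q,\mathcal{M})\otimes_{\xr}\xc)\mathbf{e}$, under which two-sided ideals correspond via $K\mapsto\mathbf{e}K\mathbf{e}$ in an inclusion-preserving way; in particular $I\otimes_{\xr}\xc$ corresponds to $\mathbf{e}(I\otimes_{\xr}\xc)\mathbf{e}$. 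Passing to quotients preserves Morita equivalence, and applying $\Psi$ identifies the corner quotient with $\xc\Gamma/J$, giving the first assertion.

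For the ``moreover'' part I would first promote Proposition \ref{ios after complexification} to a graded statement: summing $\Psi(\mathbf{e}(\mathcal{M}(p)\otimes_{\xr}\xc)\mathbf{e})=\bigoplus_{i}\xc q_i$ over all paths $p$ of a fixed length $n$ (and using that every path in $\Gamma$ is a fiber of some path in $Q$ of the same length) shows that $\Psi$ sends $\mathbf{e}(V^{\otimes n}\otimes_{\xr}\xc)\mathbf{e}$ onto $(V')^{\otimes n}$, where $V=\bigoplus_{\alpha\in Q_1}\mathcal{M}(\alpha)$ and $V'=\bigoplus_{\alpha\in\Gamma_1}\xc\alpha$. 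Consequently, the arrow-ideal filtrations on the two sides correspond term by term under the Morita/$\Psi$ chain. The forward direction of $I\subseteq V^{\otimes 2}\Leftrightarrow J\subseteq (V')^{\otimes 2}$ is then immediate. For the converse, $J\subseteq (V')^{\otimes 2}$ forces $I\otimes_{\xr}\xc\subseteq V^{\otimes 2}\otimes_{\xr}\xc$ through the inverse Morita correspondence; intersecting with $T(Q,\mathcal{M})$ then recovers $I\subseteq V^{\otimes 2}$, since $T(Q,\mathcal{M})\otimes_{\xr}\xc=T(Q,\mathcal{M})\oplus \im T(Q,\mathcal{M})$ as $\xr$-modules, so $(X\otimes_{\xr}\xc)\cap T(Q,\mathcal{M})=X$ for any $\xr$-subspace $X$.

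Admissibility of $I$ amounts to $V^{\otimes N}\subseteq I\subseteq V^{\otimes 2}$ for some $N\geq 2$, and similarly for $J$ with $V'$ in place of $V$. The upper-bound condition is precisely the ``moreover'' statement just treated, and the lower-bound condition $V^{\otimes N}\subseteq I\Leftrightarrow (V')^{\otimes N}\subseteq J$ follows by the same argument after reversing the direction of the inclusions.

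The main obstacle I anticipate is the bookkeeping in the converse directions, where an ideal on the $\xc\Gamma$ side has to be lifted back through both the idempotent cut-down and the scalar extension to $\xc$. The two clean facts that shoulder this work are (i) the Morita ideal correspondence along a full idempotent is order-preserving in both directions, and (ii) $-\otimes_{\xr}\xc$ is faithful on $\xr$-subspaces of $T(Q,\mathcal{M})$. With those in hand the theorem reduces to careful transcription of Proposition \ref{ios after complexification}.
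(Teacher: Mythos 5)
The paper states this theorem as a citation of \cite[Theorem 3.7]{LJ2023} and gives no proof of its own, so there is nothing internal to compare against; I can only assess your argument on its merits, and it is correct. The chain you use is the expected one: flatness of $\xc$ over $\xr$ identifies $T(Q,\mathcal{M})/I\otimes_{\xr}\xc$ with $(T(Q,\mathcal{M})\otimes_{\xr}\xc)/(I\otimes_{\xr}\xc)$; the full idempotent $\mathbf{e}$ gives the order-preserving bijection $K\leftrightarrow \mathbf{e}K\mathbf{e}$ between two-sided ideals together with a Morita equivalence of the corresponding quotients (noting $\mathbf{e}A\mathbf{e}\cap K=\mathbf{e}K\mathbf{e}$ for two-sided $K$, so the corner of $A/K$ is $\mathbf{e}A\mathbf{e}/\mathbf{e}K\mathbf{e}$); and $\Psi$ transports the corner quotient to $\xc\Gamma/J$. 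The graded upgrade of Proposition \ref{ios after complexification} that you invoke for the ``moreover'' clause is legitimate: since $\pi$ is a morphism of quivers, every length-$n$ path of $\Gamma$ is a fiber of a length-$n$ path of $Q$, so summing the displayed formula over all paths of length $n$ does give $\Psi(\mathbf{e}(V^{\otimes n}\otimes_{\xr}\xc)\mathbf{e})=(V')^{\otimes n}$, and faithfulness of $-\otimes_{\xr}\xc$ on $\xr$-subspaces closes the converse directions. The one place requiring a word of care is the step where you pass back through the inverse correspondence $L\mapsto ALA$ and conclude containment in $V^{\otimes 2}\otimes_{\xr}\xc$: for this you must read $(\bigoplus_{\alpha\in Q_1}\mathcal{M}(\alpha))^{\otimes 2}$ as the square of the arrow ideal, i.e.\ $\bigoplus_{n\geq 2}V^{\otimes n}$, which is a two-sided ideal, rather than the literal degree-two graded component, which is not; this is evidently the intended reading (otherwise the admissibility clause would be vacuous), but it is worth saying explicitly since your argument uses that the target of the containment is stable under $A(-)A$.
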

	
	We call $\xc \Gamma/J$ the complexifed quiver presentation of $T(Q, \mathcal{M})/I$.
	
	\begin{exm}\label{skgen exm}
		Assume that $(Q,\mathcal{M})=\xr_u\overset{\xc_\beta}{\rightarrow} \xc_v\overset{\xc_\alpha}{\rightarrow} \xr_w$ and $I=\langle 1\otimes 1\rangle\subset\xc\otimes_{\xc}\xc=\mathcal{M}(\alpha)\otimes_{\xc}\mathcal{M}(\beta)$. Then 
		$$\Gamma=\makecell{\xymatrix@C=5ex@R=-1ex{ & v \ar[dr]^{\alpha}& \\ u\ar@<.5ex>[ur]^{\beta}\ar@<-0.5ex>[dr]_{\overline{\beta}}&  & w \\ & \overline{v} \ar[ur]_{\overline{\alpha}}&}}\mbox{, and } J=\langle\alpha\beta+\overline{\alpha}\overline{\beta}\rangle;$$ see the proof of \cite[Lemma 6.4]{LJ2023}.
		The algebra $\xc\Gamma/J$ is isomorphic to a skew-gentle algebra $\xc\Gamma'/J'$ (see the definition in \cite[Section 4]{GP1999}), where
		$$\Gamma'=\makecell{\xymatrix@C=5ex@R=-1ex{ u'\ar[r]^{\beta'}& v'\ar@(ul,ur)_{s} \ar[r]^{\alpha'} & w' }}\mbox{, and } J'=\langle\alpha'\beta',ss-s\rangle.$$
		The isomorphism is induced by $f\colon\xc\Gamma'\rightarrow\xc\Gamma$, with $f(e_{u'})=e_u$, $f(e_{v'})=e_v+e_{\overline{v}}$, $f(e_{w'})=e_w$, $f(s)=e_{\overline{v}}$, $f(\alpha')=\alpha+\overline{\alpha}$ and $f(\beta')=\beta-\overline{\beta}$.
	\end{exm}
	
	\medskip
	
	\subsection{The isomorphism \texorpdfstring{$\Psi$}{}} We recall the isomorphism $\Psi$ which will be used for the computations in Table \ref{psi(ab)}; see \cite[Section 2 and 3]{LJ2023} for more details. 
	
	The idempotent $\mathbf{e}$ of $T(Q,\mathcal{M})\otimes_{\xr}\xc$ is given by
	$$\mathbf{e}=(\mathbf{e}_{i})_{i\in Q_0}\in\underset{i\in Q_0}{\prod}(\mathcal{M}(i)\otimes_{\xr}\xc),$$ where
	\begin{equation*}
		\mathbf{e}_{i}=\begin{cases}
			1_{\mathcal{M}(i)}\otimes 1 & \text{if } \mathcal{M}(i)=\xr \text{ or } \xc\\ (1_{\xh}\otimes1-\jm\otimes\im)/2
			&\text{if } \mathcal{M}(i)=\xh
		\end{cases}.\end{equation*}
	
	The isomorphism $\Psi$ is given as the composition of the following isomorphisms:
	\begin{align*}
		\mathbf{e}(T(Q,\mathcal{M})\otimes_{\xr}\xc)\mathbf{e} &=\mathbf{e}(T(\underset{i\in Q_0}{\prod}\mathcal{M}(i),\underset{\alpha\in Q_0}{\oplus}\mathcal{M}(\alpha))\otimes_{\xr}\xc)\mathbf{e}\\
		&\overset{\psi_1}{\simeq}\mathbf{e}T(\underset{i\in Q_0}{\prod}(\mathcal{M}(i)\otimes_{\xr}\xc),\underset{\alpha\in Q_0}{\oplus}(\mathcal{M}(\alpha)\otimes_{\xr}\xc))\mathbf{e}\\
		&\overset{\theta}{\simeq}\theta(\mathbf{e})T(\underset{i\in Q_0}{\prod}\theta(\mathcal{M}(i)\otimes_{\xr}\xc),\underset{\alpha\in Q_0}{\oplus}\theta(\mathcal{M}(\alpha)\otimes_{\xr}\xc))\theta(\mathbf{e})\\
		&\overset{\psi_2}{\simeq} T(\underset{i\in Q_0}{\prod}\theta(\mathbf{e}(\mathcal{M}(i)\otimes_{\xr}\xc)\mathbf{e}),\underset{\alpha\in Q_0}{\oplus}\theta(\mathbf{e}(\mathcal{M}(\alpha)\otimes_{\xr}\xc)\mathbf{e}))\\
		&\overset{\psi_3}{\simeq} \xc\Gamma.
	\end{align*}
	
	(1) The isomorphism $\psi_1$ is given in a canonical way as in the isomorphism:
	$$(\mathcal{M}\otimes_{\mathcal{A}}\mathcal{N})\otimes_{\xr}\xc\simeq(\mathcal{M}\otimes_{\xr}\xc)\otimes_{\mathcal{A}\otimes_{\xr}\xc}(\mathcal{N}\otimes_{\xr}\xc),$$
	where $\mathcal{A}$ is an $\xr$-algebra, $\mathcal{M}$ and $\mathcal{N}$ are $\mathcal{A}$-bimodules, which is given by 
	$$(m\otimes n)\otimes c\mapsto (m\otimes c)\otimes (n\otimes 1).$$
	
	(2) Now we give $\theta$ and $\psi_3$. For vertices, we give the isomorphism of $\xc$-algebras $$\psi_3\circ\theta:\underset{i\in Q_0}{\prod}\mathbf{e}(\mathcal{M}(i)\otimes_{\xr}\xc)\mathbf{e}\simeq \underset{i\in Q_0}{\prod}\theta(\mathbf{e}(\mathcal{M}(i)\otimes_{\xr}\xc)\mathbf{e})\simeq\xc\Gamma_0$$ as below. Let $i$ be a vertex in $Q$.
	
	$\cdot$ If $\mathcal{M}(i)=\xr$, then $\mathbf{e}(\xr\otimes_{\xr}\xc)\mathbf{e}\simeq\xc\simeq \xc e_i$ , given by $$a\otimes c\mapsto ac\mapsto ace_i \mbox{, where } \mathbf{e}_i=1_{\xr}\otimes1_{\xc}\mbox{ and } \theta(\mathbf{e}_i)=1_\xc.$$
	
	$\cdot$ If $\mathcal{M}(i)=\xh$, then $\mathbf{e}(\xh\otimes_{\xr}\xc)\mathbf{e}\simeq\theta(\mathbf{e})M_2(\xc)\theta(\mathbf{e})\simeq \xc e_i$, given by $$\mathbf{e}_i(\begin{bmatrix} a & b \\ -\overline{b} & \overline{a} \end{bmatrix}\otimes c)\mathbf{e}_i\mapsto\begin{bmatrix}1&0\\0&0\end{bmatrix}\begin{bmatrix} ac & bc \\ -\overline{b}c & \overline{a}c \end{bmatrix}\begin{bmatrix}1&0\\0&0\end{bmatrix}\mapsto ace_i.$$
	
	$\cdot$ If $\mathcal{M}(i)=\xc$, then $\mathbf{e}(\xc\otimes_{\xr}\xc)\mathbf{e}\simeq\xc\times\xc\simeq \xc e_i\oplus\xc e_{\overline{i}}$, given by $$a\otimes c\mapsto (ac, \overline{a}c)\mapsto ace_i+\overline{a}ce_{\overline{i}}\mbox{, where } \mathbf{e}_i=1_{\xc}\otimes1_{\xc}\mbox{ and } \theta(\mathbf{e}_i)=1_{\xc\times\xc}.$$
	
	For arrows, the isomorphism $$\psi_3\circ\theta:\underset{\alpha\in Q_1}{\oplus}\mathbf{e}(\mathcal{M}(\alpha)\otimes_{\xr}\xc)\mathbf{e}\simeq\underset{\alpha\in Q_1}{\oplus}\theta(\mathbf{e}(\mathcal{M}(\alpha)\otimes_{\xr}\xc)\mathbf{e})\simeq \xc\Gamma_1$$ is given as below. For each $\alpha\in Q_1$, via the isomorphisms of algebras, we list below isomorphisms of $\xc e_{t(\alpha)}$-$\xc e_{s(\alpha)}$-bimodules.
	
	$\cdot$ If $\mathcal{M}(\alpha)={_{\xr}\xr_{\xr}}$, then $\mathbf{e}(\xr\otimes_{\xr}\xc)\mathbf{e}\simeq\xc\simeq \xc \alpha$, given by $$a\otimes c\mapsto ac\mapsto ac\alpha.$$
	
	$\cdot$ If $\mathcal{M}(\alpha)={_{\xh}\xh_{\xh}}$, then $\mathbf{e}(\xh\otimes_{\xr}\xc)\mathbf{e}\simeq\theta(\mathbf{e})M_2(\xc)\theta(\mathbf{e})\simeq \xc \alpha$, given by $$\mathbf{e}_{t(\alpha)}(\begin{bmatrix} a & b \\ -\overline{b} & \overline{a} \end{bmatrix}\otimes c)\mathbf{e}_{s(\alpha)}\mapsto\begin{bmatrix}1&0\\0&0\end{bmatrix}\begin{bmatrix} ac & bc \\ -\overline{b}c & \overline{a}c \end{bmatrix}\begin{bmatrix}1&0\\0&0\end{bmatrix}\mapsto ac\alpha.$$
	
	$\cdot$ If $\mathcal{M}(\alpha)={_{\xr}\xc_{\xc}}$, then $\mathbf{e}(\xc\otimes_{\xr}\xc)\mathbf{e}\simeq\xc\oplus\xc\simeq \xc \alpha\oplus\xc\overline{\alpha}$, given by $$a\otimes c\mapsto (ac,\overline{a}c)\mapsto ac\alpha+\overline{a}c\overline{\alpha}.$$
	
	$\cdot$ If $\mathcal{M}(\alpha)={_{\xc}\xc_{\xr}}$, then $\mathbf{e}(\xc\otimes_{\xr}\xc)\mathbf{e}\simeq\xc\oplus\xc\simeq \xc \alpha\oplus\xc\overline{\alpha}$, given by $$a\otimes c\mapsto (ac,\overline{a}c)\mapsto ac\alpha+\overline{a}c\overline{\alpha}.$$
	
	$\cdot$ If $\mathcal{M}(\alpha)={_{\xr}\xh_{\xh}}$, then
	$\mathbf{e}(\xh\otimes_{\xr}\xc)\mathbf{e}\simeq M_2(\xc)\theta(\mathbf{e})\simeq \xc \alpha\oplus\xc\overline{\alpha}$, given by $$(\begin{bmatrix} a & b \\ -\overline{b} & \overline{a} \end{bmatrix}\otimes c)\mathbf{e}_{s(\alpha)}\mapsto\begin{bmatrix} ac & bc \\ -\overline{b}c & \overline{a}c \end{bmatrix}\begin{bmatrix}1&0\\0&0\end{bmatrix}\mapsto ac\alpha-\overline{b}c\overline{\alpha}.$$
	
	$\cdot$ If $\mathcal{M}(\alpha)={_{\xh}\xh_{\xr}}$, then $\mathbf{e}(\xh\otimes_{\xr}\xc)\mathbf{e}\simeq\theta(\mathbf{e})M_2(\xc)\simeq \xc \alpha\oplus\xc\overline{\alpha}$, given by $$\mathbf{e}_{t(\alpha)}(\begin{bmatrix} a & b \\ -\overline{b} & \overline{a} \end{bmatrix}\otimes c)\mapsto\begin{bmatrix}1&0\\0&0\end{bmatrix}\begin{bmatrix} ac & bc \\ -\overline{b}c & \overline{a}c \end{bmatrix}\mapsto ac\alpha+bc\overline{\alpha}.$$
	
	$\cdot$ If $\mathcal{M}(\alpha)=\leftindex_{\xh}{\begin{bmatrix}\xc\\ \xc\end{bmatrix}}_{\xc}$, then 
	$\mathbf{e}(\begin{bmatrix}\xc\\ \xc\end{bmatrix}\otimes_{\xr}\xc)\mathbf{e}\simeq\theta(\mathbf{e}) M_2(\xc) \simeq\xc \alpha\oplus\xc\overline{\alpha}$, given by $$\mathbf{e}_{t(\alpha)}(\begin{bmatrix} a\\ b \end{bmatrix}\otimes c)\mapsto\begin{bmatrix}1&0\\0&0\end{bmatrix}\begin{bmatrix} ac & -\overline{b}c \\ bc & \overline{a}c \end{bmatrix} \mapsto ac\alpha-\overline{b}c\overline{\alpha}.$$
	
	$\cdot$ If $\mathcal{M}(\alpha)=\leftindex_{\xc}{\begin{bmatrix}\xc & \xc\end{bmatrix}}_{\xh}$, then 
	$\mathbf{e}(\begin{bmatrix}\xc & \xc\end{bmatrix}\otimes_{\xr}\xc)\mathbf{e}\simeq M_2(\xc) \theta(\mathbf{e})\simeq\xc \alpha\oplus\xc\overline{\alpha}$, given by $$(\begin{bmatrix}a & b \end{bmatrix}\otimes c)\mathbf{e}_{s(\alpha)}\mapsto \begin{bmatrix}ac & bc\\ -\overline{b}c &\overline{a}c \end{bmatrix}\begin{bmatrix}1&0\\0&0\end{bmatrix}\mapsto ac\alpha-\overline{b}c\overline{\alpha}.$$
	
	$\cdot$ If $\mathcal{M}(\alpha)={_{\xc}\xc_{\xc}}$, then $\mathbf{e}(\xc\otimes_{\xr}\xc)\mathbf{e}\simeq\xc\oplus\xc\simeq \xc \alpha\oplus\xc\overline{\alpha}$, given by $$a\otimes c\mapsto (ac,\overline{a}c)\mapsto ac\alpha+\overline{a}c\overline{\alpha}.$$
	
	$\cdot$ If $\mathcal{M}(\alpha)={_{\xc}\overline{\xc}_{\xc}}$, then $\mathbf{e}(\overline{\xc}\otimes_{\xr}\xc)\mathbf{e}\simeq\xc\oplus\xc\simeq \xc \alpha\oplus\xc\overline{\alpha}$, given by $$a\otimes c\mapsto (ac,\overline{a}c)\mapsto ac\alpha+\overline{a}c\overline{\alpha}.$$

	(3) The map $\psi_2$ can be reduced to the following isomorphism:
	\begin{align*}
		&\theta(\mathcal{M}(\alpha)\otimes_{\xr}\xc)\otimes_{\theta(\mathcal{M}(i)\otimes_{\xr}\xc)}\theta(\mathcal{M}(\alpha)\otimes_{\xr}\xc)\\
		\simeq&\theta((\mathcal{M}(\alpha)\otimes_{\xr}\xc)\mathbf{e})\otimes_{\theta(\mathbf{e}(\mathcal{M}(i)\otimes_{\xr}\xc)\mathbf{e})}\theta(\mathbf{e}(\mathcal{M}(\alpha)\otimes_{\xr}\xc)),
	\end{align*}
	where $\alpha,\beta\in Q_1$ with $s(\beta)=t(\alpha)=i$. We still denote it by $\psi_2$.
	
	If $\mathcal{M}(i)\neq\xh$, we set $\psi_2$ as the identity map since $\mathbf{e}(\mathcal{M}(i)\otimes_{\xr}\xc)\mathbf{e}=\mathcal{M}(i)\otimes_{\xr}\xc$. If $\mathcal{M}(i)=\xh$, $\psi_2$ can be given by the following lemma after choosing the appropriate rings and bimodules: $D=\xc$, $A,B\in\{\xc, M_2(\xc), \xc\times\xc\}$, $M\in\{\xc,\begin{bmatrix}
		\xc\\ \xc 
	\end{bmatrix}\}$, $N\in\{\xc,\begin{bmatrix}
		\xc& \xc 
	\end{bmatrix}\}$.
	
	\begin{lem}\label{map 2}
		Let $M$ be a $A$-$D$-bimodule and $N$ be a $D$-$B$-bimodule, where $A$, $B$ and $D$ are rings. We have isomorphisms of $A$-$B$-bimodules.
		$$\begin{bmatrix}
			M & M 
		\end{bmatrix}\otimes_{M_2(D)}\begin{bmatrix}
			N \\ N \end{bmatrix}\simeq \begin{bmatrix}
			M & 0 
		\end{bmatrix}\otimes_{\begin{bsmallmatrix}
				D &0  \\ 0&0 \end{bsmallmatrix}}\begin{bmatrix}
			N\\0 	\end{bmatrix}\simeq M\otimes_{D}N \mbox{, given by}$$
		$$\begin{bmatrix}
			m_1 & m_2 
		\end{bmatrix}\otimes\begin{bmatrix}
			n_1 \\ n_2 \end{bmatrix}\mapsto \begin{bmatrix}
			m_1 & 0 
		\end{bmatrix}\otimes \begin{bmatrix}
			n_1 \\ 0 \end{bmatrix}+\begin{bmatrix}
			m_2 & 0	\end{bmatrix}\otimes \begin{bmatrix}
			n_2 \\ 0 \end{bmatrix} \mapsto m_1\otimes n_1+m_2\otimes n_2.$$
	\end{lem}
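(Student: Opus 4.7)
The plan is to establish the two isomorphisms separately and then check that their composition yields the explicit formula in the statement. Both isomorphisms are instances of Morita equivalence between $M_2(D)$ and $D$ applied to specific bimodules, so the ideas are standard, but they need to be spelled out with the matrix conventions used here.

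For the first isomorphism, I would exploit the decomposition of the identity $1 = e_{11} + e_{12}e_{21}$ in $M_2(D)$, together with the matrix-unit relations. Concretely, writing $e_{11}=\begin{bsmallmatrix}1 & 0\\ 0 & 0\end{bsmallmatrix}$, $e_{12}=\begin{bsmallmatrix}0 & 1\\ 0 & 0\end{bsmallmatrix}$, $e_{21}=\begin{bsmallmatrix}0 & 0\\ 1 & 0\end{bsmallmatrix}$, I would use the identities $\begin{bmatrix}m_1 & m_2\end{bmatrix} = \begin{bmatrix}m_1 & 0\end{bmatrix} + \begin{bmatrix}m_2 & 0\end{bmatrix}e_{12}$ together with $e_{12}\begin{bmatrix}n_1\\ n_2\end{bmatrix} = \begin{bmatrix}n_2\\ 0\end{bmatrix}$ and $\begin{bmatrix}m & 0\end{bmatrix} = \begin{bmatrix}m & 0\end{bmatrix}e_{11}$ to push every simple tensor into the $e_{11}$-corner, yielding exactly the formula
\[
\begin{bmatrix}m_1 & m_2\end{bmatrix}\otimes\begin{bmatrix}n_1\\ n_2\end{bmatrix} = \begin{bmatrix}m_1 & 0\end{bmatrix}\otimes\begin{bmatrix}n_1\\ 0\end{bmatrix} + \begin{bmatrix}m_2 & 0\end{bmatrix}\otimes\begin{bmatrix}n_2\\ 0\end{bmatrix}.
\]
The inverse, sending $\begin{bmatrix}m & 0\end{bmatrix}\otimes\begin{bmatrix}n\\ 0\end{bmatrix}$ to itself inside the larger tensor product, is clearly well-defined via the inclusions of bimodules, and one only needs to verify that the forward formula is independent of the representative, which reduces to checking $M_2(D)$-bilinearity on matrix units.

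For the second isomorphism, I would observe that the (nonunital) subring $R=\begin{bsmallmatrix}D & 0\\ 0 & 0\end{bsmallmatrix}$ is isomorphic to $D$ as a ring (with its own identity $e_{11}$) via $de_{11}\mapsto d$. Under this identification, $\begin{bmatrix}M & 0\end{bmatrix}$ is $M$ as a right $D$-module and $\begin{bmatrix}N\\ 0\end{bmatrix}$ is $N$ as a left $D$-module; the balanced-tensor conditions match term by term, so the map $\begin{bmatrix}m & 0\end{bmatrix}\otimes\begin{bmatrix}n\\ 0\end{bmatrix}\mapsto m\otimes n$ is a well-defined $A$-$B$-bimodule isomorphism with the obvious inverse.

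The main (admittedly mild) obstacle is bookkeeping: one has to make sure that the $A$- and $B$-bimodule structures on both sides of each isomorphism truly agree with the ones inherited from matrix multiplication, and that moving elements through the idempotents $e_{11}$, $e_{12}$ commutes with the $D$-actions on $M$ and $N$. Once this is checked on matrix-unit generators, the full statement follows by $\xr$-linearity. Composing the two isomorphisms then gives precisely the formula displayed in the lemma.
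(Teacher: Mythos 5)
Your argument is correct and matches the paper's (very terse) proof in substance: both reduce to the corner idempotent $e_{11}$, with the paper merely exhibiting the inclusion-induced inverse of the first isomorphism and leaving the matrix-unit computation you spell out to the reader. One small slip: the decomposition of the identity should read $1=e_{11}+e_{21}e_{12}$ (since $e_{12}e_{21}=e_{11}$, not $e_{22}$), but the identities you actually use afterwards, namely $\begin{bmatrix}m_1 & m_2\end{bmatrix}=\begin{bmatrix}m_1 & 0\end{bmatrix}+\begin{bmatrix}m_2 & 0\end{bmatrix}e_{12}$ and $e_{12}\begin{bmatrix}n_1\\ n_2\end{bmatrix}=\begin{bmatrix}n_2\\ 0\end{bmatrix}$, are the correct ones, so nothing breaks.
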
 
	\begin{proof}
		The inverse map of the first isomorphism is given by	$$\begin{bmatrix}
			m & 0
		\end{bmatrix}\otimes\begin{bmatrix}
			n \\ 0\end{bmatrix}\mapsto \begin{bmatrix}
			m & 0
		\end{bmatrix}\otimes\begin{bmatrix}
			n \\ 0\end{bmatrix}.$$
	\end{proof}

	\section{The locally complexified-gentle algebras}
	
	In this section, we introduce and study $\xr$-algebras which become gentle algebras over $\xc$ after complexification. 
	
	For convenience, when we say ``A xor B", we mean ``A or B, but not both". For each vertex $v$ in a quiver $Q$, set $$v^+:=\{\alpha\in Q_1\,|\,s(\alpha)=v\},\mbox{ }v^-:=\{\alpha\in Q_1\,|\,t(\alpha)=v\}.$$ By $N_v:=\{t(\alpha)\in Q_0\,|\,\alpha\in v^+\}\cup\{s(\alpha)\in Q_0\,|\,\alpha\in v^-\}$ we denote the set of vertices adjacent to $v$. There are loops on $v$ if and only if $v\in N_v$. The number of elements in a set $S$ is denoted by $|S|$. 
	
	\medskip
	\subsection{Locally complexified-gentle algebras}
	To introduce and study $\xr$-algebras which becomes gentle algebras after complexification, we first recall the usual definition of a gentle algebra over a field $k$.
	
	\begin{defn}\label{usual defn}
		A $k$-algebra is called \textbf{locally gentle} if it is Morita equivalent to some algebra $k Q/I$, where $Q$ is a finite quiver and $I$ is an ideal of the path algebra $kQ$, such that each vertex $v$ in $kQ/I$ is a \textbf{gentle vertex}, i.e.,
		\begin{enumerate}
			\item[(G1)]  $|v^+|\leq 2$ and $|v^-|\leq 2$;
			\item[(G2)]  if there are arrows $\alpha\in v^+$ and $\beta, \gamma \in v^-$ with $\beta\neq\gamma$, then $\alpha\beta$ xor $\alpha\gamma$ belongs to $I$; if there are arrows $\alpha\in v^-$ and $\beta, \gamma \in v^+$ with $\beta\neq\gamma$, then $\beta\alpha$ xor $\gamma\alpha$ belongs to $I$;
		\end{enumerate}
		and $I$ satisfies that
		\begin{enumerate}
			\item[(G3)]  $I$ can be generated by some paths of length two. 
		\end{enumerate}
		
		A locally gentle algebra is called \textbf{gentle} if it is finite-dimensional.
	\end{defn}
	
	Now we use the field extension $\xc/\xr$ to extend the above definition.
	\begin{defn}\label{defn of cmp-gen alg}
		An $\xr$-algebra $A$ is called a (locally) complexified-gentle algebra if $A\otimes_{\xr}\xc$ is a (locally) gentle algebra.
	\end{defn}
	
	Locally complexified-gentle algebras have many similar properties to those of locally gentle algebras. We list some as instances.
	
	\begin{prop}\label{prop of cmp-gen alg}
		We have the following facts. \begin{enumerate}
			\item (Locally) complexified-gentle algebras are closed under Morita equivalence.
			\item Complexified-gentle algebras are closed under derived equivalence.
			\item Complexified-gentle algebras are generically tame.
			\item (Locally) complexified-gentle algebras are Gorenstein.
		\end{enumerate}
	\end{prop}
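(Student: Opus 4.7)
The unifying idea of my plan is that complexification $-\otimes_{\xr}\xc$ is a finite free, hence faithfully flat, ring extension, and each of the four claimed properties either transports transparently across such a base change or does so by a known descent theorem. I would treat the four items in order.

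For (1), if $A$ is Morita equivalent to $B$ via a progenerator $P$ with $B\cong\mathrm{End}_A(P)^{\mathrm{op}}$, then $P\otimes_{\xr}\xc$ is a progenerator of $A\otimes_{\xr}\xc$ whose endomorphism ring is $B\otimes_{\xr}\xc$. Hence $A\otimes_{\xr}\xc$ is (locally) gentle if and only if $B\otimes_{\xr}\xc$ is, using that locally gentle is itself Morita-invariant by Definition~\ref{usual defn}. The defining property of being (locally) complexified-gentle is therefore Morita-invariant.

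For (2), a derived equivalence between two finite-dimensional $\xr$-algebras $A$ and $B$ is given by a tilting complex in the sense of Rickard. Tensoring this complex with $\xc$ produces a tilting complex realizing a derived equivalence between $A\otimes_{\xr}\xc$ and $B\otimes_{\xr}\xc$. I would then invoke the known fact that the class of gentle algebras is closed under derived equivalence (via the Avella-Alaminos--Geiss derived invariant together with Schr\"oer--Zimmermann's stable invariants for special biserial algebras) to conclude that $B\otimes_{\xr}\xc$ is gentle whenever $A\otimes_{\xr}\xc$ is.

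For (3), $A\otimes_{\xr}\xc$ is gentle and therefore tame by the classical string-and-band classification; over the algebraically closed field $\xc$, tameness coincides with generic tameness in the sense of Crawley-Boevey. I would then apply the descent of generic tameness under a finite separable field extension, here $\xc/\xr$, to obtain generic tameness of $A$ itself. For (4), since $\xr\to\xc$ is faithfully flat, a standard $\mathrm{Ext}$-computation yields the equality $\mathrm{inj.dim}_{A}(A) = \mathrm{inj.dim}_{A\otimes_{\xr}\xc}(A\otimes_{\xr}\xc)$ on both the left and the right; since (locally) gentle algebras are Gorenstein by a result of Geiss--Reiten, so is $A$.

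I expect the main obstacle to be part (3): the correct formulation of ``generically tame'' for $\xr$-algebras and the specific descent result under the separable extension $\xc/\xr$ must be pinned down with a precise reference, rather than reduced to the routine base-change manipulations used in (1), (2) and (4).
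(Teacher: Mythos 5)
Your proposal is correct and follows essentially the same route as the paper: (1) via base change of a progenerator, (2) via base change of the derived equivalence plus Schr\"oer--Zimmermann, (3) via generic tameness of tame $\xc$-algebras and its descent along $\xc/\xr$, and (4) via comparison of injective dimensions under the free extension $A\subset A\otimes_{\xr}\xc$. The only minor divergences are that the paper needs just the inequality $\mathrm{injdim}({_A}A\otimes_{\xr}\xc)\leq\mathrm{injdim}({_{A\otimes_{\xr}\xc}}A\otimes_{\xr}\xc)$ rather than your claimed equality, and that for the \emph{locally} gentle (possibly infinite-dimensional) case it cites a Gorenstein result covering that generality rather than Geiss--Reiten.
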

	
	\begin{proof}
		(1) Assume that $A$ is an $\xr$-algebra such that $A\otimes_{\xr}\xc$ is a locally gentle algebra and that $B$ is Morita equivalent to $A$ with $B=\mathrm{End}_A(P)$ for some projective generator $P$ of $A$. Then $P\otimes_{\xr}\xc$ is a projective generator of $A\otimes_{\xr}\xc$ and $B\otimes_{\xr}\xc\simeq\mathrm{End}_{A\otimes_{\xr}\xc}(P\otimes_{\xr}\xc)$ is Morita equivalent to $A\otimes_{\xr}\xc$. Since our definition of locally gentle algebras is up to Morita equivalence, $B\otimes_{\xr}\xc$ is also a locally gentle algebra. Thus, $B$ is a locally complexified-gentle algebra.
		
		(2) Assume that $A$ is a finite-dimensional $\xr$-algebra such that $A\otimes_{\xr}\xc$ is a gentle algebra and that $B$ is derived equivalent to $A$. By \cite[Lemma 2.3]{LJ2021}, $B\otimes_{\xr}\xc$ is derived equivalent to $A\otimes_{\xr}\xc$. Thus, $B\otimes_{\xr}\xc$ is also a gentle algebra since gentle algebras are closed under derived equivalence; see \cite[Corollary 1.2]{SZ2003}.
		
		(3) Gentle algebras over $\xc$ are of tame representation type. Hence, they are generically tame; see \cite[Theorem 2.3]{KS2001} for definitions and more details. By \cite[Theorem 4.3]{KS2001}, generically tameness is compatible with the extension $\xc/\xr$. Hence, a complexified-gentle algebra is generically tame.
		
		(4) Let $A$ be a locally complexified-gentle algebra. Then $A \otimes_{\xr} \xc$ is a locally gentle algebra. By \cite[Theorem 4.28]{FOZ2024} $A \otimes_{\xr} \xc$ is Gorenstein, i.e., the injective dimensions $\mathrm{injdim}({_{A \otimes_{\xr} \xc}}A \otimes_{\xr} \xc)$ and $\mathrm{injdim}(A \otimes_{\xr} \xc{_{A \otimes_{\xr} \xc}})$ are finite. We have $A \otimes_{\xr} \xc\simeq A\oplus A$ as both a left and a right $A$-module. So $\mathrm{injdim}({_A}A)=\mathrm{injdim}({_A}A\otimes_{\xr}\xc)$. Due to \cite[Proposition 2]{ERZ1957}, $\mathrm{injdim}({_A}A\otimes_{\xr}\xc)\leq\mathrm{injdim}({_{A \otimes_{\xr} \xc}}A \otimes_{\xr} \xc)$. So $\mathrm{injdim}({_A}A)$ is finite. Dually, $\mathrm{injdim}(A_A)$ is finite. Therefore, $A$ is Gorenstein.  
	\end{proof}
	
	There are many other properties of locally complexified-gentle algebras that can be investigated. In the rest of this paper, our purpose is to describe locally complexified-gentle algebras in a more concrete form. By (1) in the above proposition and Gabriel's theorem, a reasonable approach is to use modulated quivers. 
	
	In the following subsections, we give two types of locally complexified-gentle algebras and conjecture that they are all locally complexified-gentle algebras. An $\xr$-algebra $A$ is called \textbf{connected} if $A$ is Morita equivalent to some $T(Q,\mathcal{M})/I$ such that $Q$ is a connected quiver. Notice that an algebra is (locally) complexified-gentle if and only if so is each of its connected components.
	
	\subsection{Gentle vertices in \texorpdfstring{$T(Q,\mathcal{M})/I$}{}}
	
	Let $T(Q,\mathcal{M})$ be the tensor algebra of a modulated quiver $(Q,\mathcal{M})$ over $\xr$ and $I$ be an ideal of $T(Q,\mathcal{M})$. We consider which conditions on $T(Q,\mathcal{M})/I$ will ensure that its complexified quiver presentation $\xc\Gamma/J$ is a gentle algebra. We first consider such ``local'' conditions for each vertex in $T(Q,\mathcal{M})/I$.

	\begin{defn}
		A vertex $v$ in $T(Q,\mathcal{M})/I$ is called \textbf{ordinarily gentle} with $\xr$ (with $\xh$) if $\mathcal{M}(u)=\mathcal{M}(v)=\xr \mbox{ (respectively,}= \xh),\forall u\in N_v$, and satisfies the following:
		\begin{enumerate}
			\item[(G1)] $|v^+|\leq 2$ and $|v^-|\leq 2$;
			\item[(G2)] for $\alpha\in v^+$ and $\beta, \gamma \in v^-$ with $\beta\neq\gamma$, $I$ includes $\mathcal{M}(\alpha\beta)$ xor $\mathcal{M}(\alpha\gamma)$;\\ for $\alpha\in v^-$ and $\beta, \gamma \in v^+$ with $\beta\neq\gamma$, $I$ includes $\mathcal{M}(\beta\alpha)$ xor $\mathcal{M}(\gamma\alpha)$.
		\end{enumerate}
	\end{defn}
	
	\begin{defn}
		A vertex $v$ in $T(Q,\mathcal{M})/I$ is called \textbf{ordinarily gentle} with $\xc$ if $\mathcal{M}(v)=\xc$ and satisfies the following:
		\begin{enumerate}
			\item[(G1)] $|v^+|\leq 2$ and $|v^-|\leq 2$;
			\item[(G2)] for $\alpha\in v^+$ and $\beta, \gamma \in v^-$ with $\beta\neq\gamma$, $I$ includes $\mathcal{M}(\alpha\beta)$ xor $\mathcal{M}(\alpha\gamma)$; \\
			for $\alpha\in v^-$ and $\beta, \gamma \in v^+$ with $\beta\neq\gamma$, $I$ includes $\mathcal{M}(\beta\alpha)$ xor $\mathcal{M}(\gamma\alpha)$;
			\item[(G3)] for $\alpha\in v^+$ and $\beta\in v^-$ with  $\mathcal{M}(t(\alpha))\neq\xc$ and $\mathcal{M}(s(\beta))\neq\xc$,\\ $I\cap\mathcal{M}(\alpha\beta)=\mathcal{M}(\alpha\beta)$ or $0$.
		\end{enumerate}
	\end{defn}
	
	Now we consider vertices $v$ such that $\mathcal{M}(v)\neq\xc$ and $v$ are not ordinarily gentle. In fact, we consider 18 cases of modulated quivers formed by $v$, $N_v$, $v^-$ and $v^+$ listed in Table \ref{spgenv}. In the first 10 cases, we denote the other arrow parallel to $\gamma$ by $\gamma^{\im}$, which is convenient for discussion in the rest of this paper. 
	
	In Table \ref{spgenv}, we also write down elements $r_0$, $r_1$, $r_0^\im$ (only for cases $\xr\xr\xr$ and $\xr\xh\xr$) and $r_1^\im$ (only for cases $\xr\xr\xr$ and $\xr\xh\xr$) in $\underset{\alpha\in v^+, \beta\in v^-}{\bigoplus}\mathcal{M}(\alpha\beta)$. They are generators of these bimodules. We denote $a_{\mathcal{M}(\gamma)}$ by $a_{\gamma}$ for short in the table.
	
	\begin{table}[!htb]
		\small{\begin{tabular}{ll|l|l}
				\cline{1-4}
				cases & modulated quivers  &$r_0 (r_0^\im) \in\hspace{-3mm}\underset{\alpha\in v^+, \beta\in v^-}{\bigoplus}\hspace{-3mm}\mathcal{M}(\alpha\beta)$ &$r_1$ ($r_1^\im$)  $\in\hspace{-3mm}\underset{\alpha\in v^+, \beta\in v^-}{\bigoplus}\hspace{-3mm}\mathcal{M}(\alpha\beta)$\\ \hline
				
				$\xr\xr\xr$ & \makecell{$\xymatrix@C=4ex@R=-1ex{\xr_u\ar@<.4ex>[r]^{\xr_\beta,\,\xr_{\beta^\im}}\ar@<-.4ex>[r] & \xr_v \ar@<.4ex>[r]^{\xr_\alpha,\,\xr_{\alpha^\im}}\ar@<-0.4ex>[r] & \xr_w}$} &\begin{tabular}[c]{@{}l@{}}$1_{\alpha}\otimes1_{\beta}-1_{\alpha^\im}\otimes1_{\beta^\im}$\\ \small{($-1_{\alpha}\otimes1_{\beta^\im}-\!1_{\alpha^\im}\otimes1_{\beta}$)}\end{tabular}  & \begin{tabular}[c]{@{}l@{}}$1_{\alpha}\otimes1_{\beta}+1_{\alpha^\im}\otimes1_{\beta^\im}$\\ \small{($1_{\alpha}\otimes1_{\beta^\im}-\!1_{\alpha^\im}\otimes1_{\beta}$)}\end{tabular}  \\ \hline
				
				$\xr\xr\xh$ & \makecell{$\xymatrix@C=4ex@R=-1ex{\xr_u\ar@<.4ex>[r]^{\xr_\beta,\,\xr_{\beta^\im}}\ar@<-.4ex>[r] & \xr_v \ar[r]^{\xh_\alpha} & \xh_w}$}  & $1_{\alpha}\otimes1_{\beta}+\jm_{\alpha}\otimes1_{\beta^\im}$ & $1_{\alpha}\otimes1_{\beta}-\jm_{\alpha}\otimes1_{\beta^\im}$ \\\hline

				$\xr\xr\xc$ & \makecell{$\xymatrix@C=4ex@R=-1ex{\xr_u\ar@<.4ex>[r]^{\xr_\beta,\,\xr_{\beta^\im}}\ar@<-.4ex>[r] & \xr_v \ar[r]^{\xc_\alpha} & \xc_w}$} & $1_{\alpha}\otimes1_{\beta}+\im_{\alpha}\otimes1_{\beta^\im}$ & $1_{\alpha}\otimes1_{\beta}-\im_{\alpha}\otimes1_{\beta^\im}$  \\ \hline
				
				$\xh\xh\xr$ & \makecell{$\xymatrix@C=4ex@R=-1ex{\xh_u\ar@<.4ex>[r]^{\xh_\beta,\,\xh_{\beta^\im}}\ar@<-.4ex>[r] & \xh_v \ar[r]^{\xh_\alpha} & \xr_w}$} & $1_{\alpha}\otimes1_{\beta}+1_{\alpha}\otimes\jm_{\beta^\im}$ & $1_{\alpha}\otimes\km_{\beta}-1_{\alpha}\otimes\lm_{\beta^\im}$ \\ \hline
				
				$\xh\xh\xh$ & \makecell{$\xymatrix@C=4ex@R=-1ex{\xh_u\ar@<.4ex>[r]^{\xh_\beta,\,\xh_{\beta^\im}}\ar@<-.4ex>[r] & \xh_v \ar@<.4ex>[r]^{\xh_\alpha,\,\xh_{\alpha^\im}}\ar@<-0.4ex>[r] & \xh_w}$} & \begin{tabular}[c]{@{}l@{}}$1_{\alpha}\otimes1_{\beta}-1_{\alpha^\im}\otimes1_{\beta^\im}$\\$\quad+1_{\alpha}\otimes\jm_{\beta^\im}+1_{\alpha^\im}\otimes\jm_{\beta}$\end{tabular} & \begin{tabular}[c]{@{}l@{}}$1_{\alpha}\otimes\km_{\beta}+1_{\alpha^\im}\otimes\km_{\beta^\im}$\\$\quad-1_{\alpha}\otimes\lm_{\beta^\im}+1_{\alpha^\im}\otimes\lm_{\beta}$\end{tabular} \\ \hline
				
				$\xh\xh\xc$ & \makecell{$\xymatrix@C=4ex@R=-1ex{\xh_u\ar@<.4ex>[r]^{\xh_\beta,\,\xh_{\beta^\im}}\ar@<-.4ex>[r] & \xh_v \ar[r]^{\xc^2_\alpha} & \xc_w}$} & \begin{tabular}[c]{@{}l@{}}$ \tiny{\begin{bmatrix}1&0\end{bmatrix}}_{\alpha}\otimes1_{\beta}$\\ \qquad $+\tiny{\begin{bmatrix}1&0\end{bmatrix}}_{\alpha}\otimes\jm_{\beta^\im}$\end{tabular} &  \begin{tabular}[c]{@{}l@{}}$\tiny{\begin{bmatrix}1&0\end{bmatrix}}_{\alpha}\otimes\km_{\beta}$\\ \qquad$-\tiny{\begin{bmatrix}1&0\end{bmatrix}}_{\alpha}\otimes\lm_{\beta^\im}$\end{tabular}
				\\ \hline
				
				$\xh\xr\xr$ & \makecell{$\xymatrix@C=4ex@R=-1ex{\xh_u\ar[r]^{\xh_\beta} & \xr_v \ar@<.4ex>[r]^{\xr_\alpha,\,\xr_{\alpha^\im}}\ar@<-0.4ex>[r] & \xr_w}$} & $1_{\alpha}\otimes1_{\beta}+1_{\alpha^\im}\otimes\jm_{\beta}$ &  $1_{\alpha}\otimes1_{\beta}-1_{\alpha^\im}\otimes\jm_{\beta}$    \\ \hline
				
				$\xc\xr\xr$ & \makecell{$\xymatrix@C=4ex@R=-1ex{\xc_u\ar[r]^{\xc_\beta} & \xr_v \ar@<.4ex>[r]^{\xr_\alpha,\,\xr_{\alpha^\im}}\ar@<-0.4ex>[r] & \xr_w}$} & $1_{\alpha}\otimes1_{\beta}+1_{\alpha^\im}\otimes\im_{\beta}$ &  $1_{\alpha}\otimes1_{\beta}-1_{\alpha^\im}\otimes\im_{\beta}$    \\ \hline
				
				$\xr\xh\xh$ & \makecell{$\xymatrix@C=4ex@R=-1ex{\xr_u\ar[r]^{\xh_\beta} & \xh_v \ar@<.4ex>[r]^{\xh_\alpha,\,\xh_{\alpha^\im}}\ar@<-0.4ex>[r] & \xh_w}$} & $1_{\alpha}\otimes1_{\beta}+1_{\alpha^\im}\otimes\jm_{\beta}$ &  $1_{\alpha}\otimes\km_{\beta}+1_{\alpha^\im}\otimes\lm_{\beta}$  \\ \hline
				
				$\xc\xh\xh$ & \makecell{$\xymatrix@C=4ex@R=-1ex{\xc_u\ar[r]^{\xc^2_\beta} & \xh_v \ar@<.4ex>[r]^{\xh_\alpha,\,\xh_{\alpha^\im}}\ar@<-0.4ex>[r] & \xh_w}$} & $1_{\alpha}\otimes\tiny{\begin{bmatrix}1\\0\end{bmatrix}}_{\beta}+\jm_{\alpha^\im}\otimes\tiny{\begin{bmatrix}1\\0\end{bmatrix}}_{\beta}$ &  $\km_{\alpha}\otimes\tiny{\begin{bmatrix}1\\0\end{bmatrix}}_{\beta}+\lm_{\alpha^\im}\otimes\tiny{\begin{bmatrix}1\\0\end{bmatrix}}_{\beta}$
				\\ \hline
				
				$\xh\xr\xh$ & \makecell{$\xymatrix@C=4ex@R=-1ex{\xh_u\ar@<-.4ex>[r]^{\xh_\beta} & \xr_v \ar@<-.4ex>[r]^{\xh_\alpha} & \xh_w}$} & $1_{\alpha}\otimes1_{\beta}-\jm_{\alpha}\otimes\jm_{\beta}$ &  $1_{\alpha}\otimes1_{\beta}+\jm_{\alpha}\otimes\jm_{\beta}$  
				\\  \hline
				
				$\xh\xr\xc$ & \makecell{$\xymatrix@C=4ex@R=-1ex{\xh_u\ar@<-.4ex>[r]^{\xh_\beta} & \xr_v \ar@<-.4ex>[r]^{\xc_\alpha} & \xc_w}$}  & $1_{\alpha}\otimes1_{\beta}-\im_{\alpha}\otimes\jm_{\beta}$ &  $1_{\alpha}\otimes1_{\beta}+\im_{\alpha}\otimes\jm_{\beta}$ 
				\\ \hline
				
				$\xc\xr\xh$ & \makecell{$\xymatrix@C=4ex@R=-1ex{\xc_u\ar@<-.4ex>[r]^{\xc_\beta} & \xr_v \ar@<-.4ex>[r]^{\xh_\alpha} & \xh_w}$}  & $1_{\alpha}\otimes1_{\beta}-\jm_{\alpha}\otimes\im_{\beta}$ &  $1_{\alpha}\otimes1_{\beta}+\jm_{\alpha}\otimes\im_{\beta}$
				\\ \hline
				
				$\xc\xr\xc$ & \makecell{$\xymatrix@C=4ex@R=-1ex{\xc_u\ar@<-.4ex>[r]^{\xc_\beta} & \xr_v \ar@<-.4ex>[r]^{\xc_\alpha} & \xc_w}$}  & $1_{\alpha}\otimes1_{\beta}-\im_{\alpha}\otimes\im_{\beta}$ &  $1_{\alpha}\otimes1_{\beta}+\im_{\alpha}\otimes\im_{\beta}$
				\\ \hline
				
				$\xr\xh\xc$ & \makecell{$\xymatrix@C=4ex@R=-1ex{\xr_u\ar@<-.4ex>[r]^{\xh_\beta} & \xh_v \ar@<-.4ex>[r]^{\xc^2_\alpha} & \xc_w}$}  & $\tiny{\begin{bmatrix}1&0\end{bmatrix}}_{\alpha}\otimes1_{\beta}$ &  $\tiny{\begin{bmatrix}1&0\end{bmatrix}}_{\alpha}\otimes\km_{\beta}$
				\\ \hline
				
				$\xc\xh\xr$ & \makecell{$\xymatrix@C=4ex@R=-1ex{\xc_u\ar@<-.4ex>[r]^{\xc^2_\beta} & \xh_v \ar@<-.4ex>[r]^{\xh_\alpha} & \xr_w}$}  &$1_{\alpha}\otimes\tiny{\begin{bmatrix}1\\0\end{bmatrix}}_{\beta}$ &  $-\km_{\alpha}\otimes\tiny{\begin{bmatrix}1\\0\end{bmatrix}}_{\beta}$
				\\ \hline
				
				$\xc\xh\xc$ & \makecell{$\xymatrix@C=4ex@R=-1ex{\xc_u\ar@<-.4ex>[r]^{\xc^2_\beta} & \xh_v \ar@<-.4ex>[r]^{\xc^2_\alpha} & \xc_w}$}  & $\tiny{\begin{bmatrix}1&0\end{bmatrix}}_{\alpha}\otimes\tiny{\begin{bmatrix}1\\0\end{bmatrix}}_{\beta}$ &  $\tiny{\begin{bmatrix}1&0\end{bmatrix}}_{\alpha}\otimes\tiny{\begin{bmatrix}0\\1\end{bmatrix}}_{\beta}$
				\\ \hline
				
				$\xr\xh\xr$ & \makecell{$\xymatrix@C=4ex@R=-1ex{\xr_u\ar@<-.4ex>[r]^{\xh_\beta} & \xh_v \ar@<-.4ex>[r]^{\xh_\alpha} & \xr_w}$}  & $1_{\alpha}\otimes1_{\beta}$ ($1_{\alpha}\otimes\jm_{\beta}$) & $1_{\alpha}\otimes\km_{\beta}$ ($1_{\alpha}\otimes\lm_{\beta}$)
				\\ \hline
		\end{tabular}} 
		\caption{Local modulated quivers for specially gentle vertex}\label{spgenv}
	\end{table}

	Denote by $I_{v}^0$ the $\mathcal{M}(w)$-$\mathcal{M}(u)$-bimodule generated by $r_0$ (by $r_0$ and $r_0^\im$ for cases $\xr\xr\xr$ and $\xr\xh\xr$), and by $I_{v}^1$ denote the $\mathcal{M}(w)$-$\mathcal{M}(u)$-bimodule generated by $r_1$ (by $r_1$ and $r_1^\im$ for cases $\xr\xr\xr$ and $\xr\xh\xr$). Then one can check (or by Remark \ref{4.1=3.1}) that for each vertex $v$ above, $$\underset{\alpha\in v^+, \beta\in v^-}{\bigoplus}\mathcal{M}(\alpha\beta)=I_{v}^0\oplus I_{v}^1.$$

	\begin{rem}
		Notice that for the above notation, we do not assume that $u$, $v$ and $w$ are pairwise different. If $N_v=\{v\}$, we identify $\alpha$ with $\beta$, and $\alpha^\im$ with $\beta^\im$. 
	\end{rem}
	
	\begin{defn}
		A vertex $v$ in $T(Q,\mathcal{M})/I$ is called \textbf{specially gentle} if its modulation is $\xr$ or $\xh$ and satisfies one of the following conditions:
		\begin{enumerate}
			\item  vertices $v$, $N_v$ and arrows $v^-$, $v^+$ with their modulations form one of the 18 cases in Table \ref{spgenv} and the ideal $I$ includes $I_v^0$ xor $I_v^1$;
			\item vertices $v$, $N_v$ and arrows $v^-$, $v^+$ with their modulations form one of the following degenerated cases in Table \ref{spgenv} (in these cases, $v$ is a source or sink in $Q$):
			$$\xr_v,\;\xr_v\underset{\xr}{\overset{\xr}{\rightrightarrows}}\xr,\;\xr\underset{\xr}{\overset{\xr}{\rightrightarrows}}\xr_v,\;\xr_v\overset{\xh}{\rightarrow}\xh,\;\xh\overset{\xh}{\rightarrow}\xr_v,\;\xr_v\overset{\xc}{\rightarrow}\xc,\;\xc\overset{\xc}{\rightarrow}\xr_v,$$
			$$\xh_v,\;\xh_v\overset{\xh}{\rightarrow}\xr,\;\xr\overset{\xh}{\rightarrow}\xh_v,\;\xh_v\underset{\xh}{\overset{\xh}{\rightrightarrows}}\xh,\;\xh\underset{\xh}{\overset{\xh}{\rightrightarrows}}\xh_v,\;\xh_v\overset{\xc^2}{\rightarrow}\xc,\;\xc\overset{\xc^2}{\rightarrow}\xh_v.$$
		\end{enumerate}
		
	\end{defn}

	The definition of ordinarily gentle vertices is an analogy of condition (G1) and (G2) in Definition \ref{usual defn}. The extra condition (G3) for ordinarily gentle vertices with $\xc$ is to avoid communicative relations; see Example \ref{skgen exm}. The 18 cases in Table \ref{spgenv} for specially gentle vertices can be considered as some special locally gentle parts in $T(Q,\mathcal{M})/I$.  Notice that a vertex can be ordinarily and specially gentle at the same time, which happens only if it is a source or sink in $Q$; see Example~\ref{unimform diff from special} for more information.
	
	\begin{rem}
		We call $v$ a \textbf{gentle} vertex in $T(Q,\mathcal{M})/I$, if $v$ is ordinarily or specially gentle in $T(Q,\mathcal{M})/I$. In this case, we set $$I_v:=\underset{\alpha\in v^+, \beta\in v^-}{\bigoplus}\mathcal{M}(\alpha\beta)\cap I,$$ which is a subbimodule (may be $0$) of $\underset{\alpha\in v^+, \beta\in v^-}{\bigoplus}\mathcal{M}(\alpha\beta)$. Here, if $v$ is a source or sink in $Q$, we set $\underset{\alpha\in v^+, \beta\in v^-}{\bigoplus}\mathcal{M}(\alpha\beta)=0$.
	\end{rem}
	
	\medskip
	
	Let $\xc\Gamma/J$ be the complexified quiver presentation of $T(Q,\mathcal{M})/I$ and $v$ be a gentle vertex in $T(Q,\mathcal{M})/I$. We show that there is an isomorphism of $\xc$-algebras $\xc\Gamma/J\simeq \xc\Gamma/J_v$ such that $v$ and $\overline{v}$ (if exists) are gentle in $\xc\Gamma/J_v$. To this end, we compute certain images in $\Psi(\mathbf{e}(I_v^0\otimes_\xr\xc)\mathbf{e})$ and $\Psi(\mathbf{e}(I_v^1\otimes_\xr\xc)\mathbf{e})$ for a specially gentle vertex $v$, and list them in Table~\ref{psi(ab)}. We compute the case ($\xr\xh\xr$) as an example in the proof of the following proposition.
	
	\begin{table}
		
		\begin{tabular}{l|l|l}
			\cline{1-3}
			\small{cases \begin{tabular}[c]{c@{}c@{}}
					complexified\\ quivers
			\end{tabular}}  & \small{\begin{tabular}[c]{@{}l@{}}
					elements in $\Psi(\mathbf{e}(I_v^0\otimes\xc)\mathbf{e})$
			\end{tabular}}   &\small{\begin{tabular}[c]{@{}l@{}}
					elements in $\Psi(\mathbf{e}(I_v^1\otimes\xc)\mathbf{e})$
			\end{tabular}} \\
			\hline
			
			\small{$\xr\xr\xr$ \makecell{$\xymatrix@C=5ex@R=-1ex{u\ar@<.5ex>[r]^{\beta}\ar@<-0.5ex>[r]_{\beta^\im} & v \ar@<.5ex>[r]^{\alpha}\ar@<-0.5ex>[r]_{\alpha^\im} & w}$}} & \small{\begin{tabular}[c]{@{}l@{}}
					$\quad\Psi(\mathbf{e}(r_0\otimes\!1-r_0^\im\otimes\im)\mathbf{e})$\\$=(\alpha+\im\alpha^\im)(\beta+\im\beta^\im)$\\$\quad\Psi(\mathbf{e}(r_0\otimes\!1+r_0^\im\otimes\im)\mathbf{e})$\\$=(\alpha-\im\alpha^\im)(\beta-\im\beta^\im)$
			\end{tabular}} & \small{\begin{tabular}[c]{@{}l@{}}
					$\quad\Psi(\mathbf{e}(r_1\otimes\!1-r_1^\im\otimes\im)\mathbf{e})$\\$=(\alpha+\im\alpha^\im)(\beta-\im\beta^\im)$\\$\quad\Psi(\mathbf{e}(r_1\otimes\!1+r_1^\im\otimes\im)\mathbf{e})$\\$=(\alpha-\im\alpha^\im)(\beta+\im\beta^\im)$
			\end{tabular}} \\  \hline
			
			\small{$\xr\xr\xh$ \makecell{$\xymatrix@C=5ex@R=-1ex{u\ar@<.5ex>[r]^{\beta}\ar@<-0.5ex>[r]_{\beta^\im} & v \ar@<.5ex>[r]^{\alpha}\ar@<-0.5ex>[r]_{\overline{\alpha}} & w}$}} & \small{\begin{tabular}[c]{@{}l@{}}
					$\Psi(\mathbf{e}(r_0\otimes\!1)\mathbf{e})=\alpha(\beta+\im\beta^\im)$\\$\Psi(\mathbf{e}(\km r_0\otimes\!1)\mathbf{e})=\overline{\alpha}(\beta-\im\beta^\im)$
			\end{tabular}} & \small{\begin{tabular}[c]{@{}l@{}}
					$\Psi(\mathbf{e}(r_1\otimes\!1)\mathbf{e})=\alpha(\beta-\im\beta^\im)$\\$\Psi(\mathbf{e}(\km r_1\otimes\!1)\mathbf{e})=\overline{\alpha}(\beta+\im\beta^\im)$
			\end{tabular}} \\ \hline

			\small{$\xr\xr\xc$ \makecell{$\xymatrix@C=5ex@R=-1ex{ & & w\\ u\ar@<.5ex>[r]^{\beta}\ar@<-0.5ex>[r]_{\beta^\im}& v \ar[ur]^{\alpha}\ar[dr]_{\overline{\alpha}} & \\ & &\overline{w}}$}} & \small{\begin{tabular}[c]{@{}l@{}}
					$\!e_w\Psi(\mathbf{e}(r_0\otimes\!1)\mathbf{e})\!=\!\alpha(\beta+\im\beta^\im)\!$\\$\!e_{\overline{w}}\Psi(\mathbf{e}( r_0\otimes\!1)\mathbf{e})\!=\!\overline{\alpha}(\beta-\im\beta^\im)\!$
			\end{tabular}} & \small{\begin{tabular}[c]{@{}l@{}}
					$\!e_w\Psi(\mathbf{e}(r_1\otimes\!1)\mathbf{e})\!=\!\alpha(\beta-\im\beta^\im)\!$\\$\!e_{\overline{w}}\Psi(\mathbf{e}( r_1\otimes\!1)\mathbf{e})\!=\!\overline{\alpha}(\beta+\im\beta^\im)\!$
			\end{tabular}} \\ \hline
			
			\small{$\xh\xh\xr$ \makecell{$\xymatrix@C=5ex@R=-1ex{u\ar@<.5ex>[r]^{\beta}\ar@<-0.5ex>[r]_{\beta^\im} & v \ar@<.5ex>[r]^{\alpha}\ar@<-0.5ex>[r]_{\overline{\alpha}} & w}$}} & \small{\begin{tabular}[c]{@{}l@{}}
					$\Psi(\mathbf{e}(r_0\otimes\!1)\mathbf{e})\!=\!\alpha(\beta+\im\beta^\im)\!$\\$\!-\Psi(\mathbf{e}( r_0\km\otimes\!1)\mathbf{e})\!=\!\overline{\alpha}(\beta-\im\beta^\im)\!$
			\end{tabular}} & \small{\begin{tabular}[c]{@{}l@{}}
					$\!-\Psi(\mathbf{e}(r_1\km\otimes\!1)\mathbf{e})\!=\!\alpha(\beta-\im\beta^\im)\!$\\$\!-\Psi(\mathbf{e}(r_1\otimes\!1)\mathbf{e})\!=\!\overline{\alpha}(\beta+\im\beta^\im)\!$
			\end{tabular}}\\ \hline
			
			\small{$\xh\xh\xh$ \makecell{$\xymatrix@C=5ex@R=-1ex{u\ar@<.5ex>[r]^{\beta}\ar@<-0.5ex>[r]_{\beta^\im} & v \ar@<.5ex>[r]^{\alpha}\ar@<-0.5ex>[r]_{\alpha^\im} & w}$}} & \small{\begin{tabular}[c]{@{}l@{}}
					$\quad\Psi(\mathbf{e}(r_0\otimes\!1)\mathbf{e})\!$\\$=\!(\alpha+\im\alpha^\im)(\beta+\im\beta^\im)\!$\\$\quad-\Psi(\mathbf{e}( \km r_0\km\otimes\!1)\mathbf{e})\!$\\$=\!(\alpha-\im\alpha^\im)(\beta-\im\beta^\im)\!$
			\end{tabular}} & \small{\begin{tabular}[c]{@{}l@{}}
					$\quad\Psi(\mathbf{e}(r_1\otimes\!1)\mathbf{e})\!$\\$=\!(\alpha+\im\alpha^\im)(\beta-\im\beta^\im)\!$\\$\quad-\Psi(\mathbf{e}(\km r_1\km\otimes\!1)\mathbf{e})\!$\\$=\!(\alpha-\im\alpha^\im)(\beta+\im\beta^\im)\!$
			\end{tabular}} \\ \hline
			
			\small{$\xh\xh\xc$ \makecell{$\xymatrix@C=5ex@R=-1ex{ & & w\\ u\ar@<.5ex>[r]^{\beta}\ar@<-0.5ex>[r]_{\beta^\im}& v \ar[ur]^{\alpha}\ar[dr]_{\overline{\alpha}} & \\ & &\overline{w}}$}} & \small{\begin{tabular}[c]{@{}l@{}}
					$\!\Psi(\mathbf{e}(r_0\otimes\!1)\mathbf{e})\!=\!\alpha(\beta+\im\beta^\im)\!$\\$\!-\Psi(\mathbf{e}( r_0\km\otimes\!1)\mathbf{e})\!=\!\overline{\alpha}(\beta-\im\beta^\im)\!$
			\end{tabular}} & \small{\begin{tabular}[c]{@{}l@{}}
					$\!\Psi(\mathbf{e}(r_1\otimes\!1)\mathbf{e})\!=\!\alpha(\beta-\im\beta^\im)\!$\\$\!-\Psi(\mathbf{e}( r_1\km\otimes\!1)\mathbf{e})\!=\!\overline{\alpha}(\beta+\im\beta^\im)\!$
			\end{tabular}}  \\ \hline
			
			\small{$\xh\xr\xr$ \makecell{$\xymatrix@C=5ex@R=-1ex{u\ar@<.5ex>[r]^{\beta}\ar@<-0.5ex>[r]_{\overline{\beta}} & v \ar@<.5ex>[r]^{\alpha}\ar@<-0.5ex>[r]_{\alpha^\im} & w}$}} &  \small{\begin{tabular}[c]{@{}l@{}}
					$\Psi(\mathbf{e}(r_0\otimes\!1)\mathbf{e})\!=\!(\alpha+\im\alpha^\im)\beta\!$\\$\Psi(\mathbf{e}(r_0\km\otimes\!1)\mathbf{e})\!=\!(\alpha-\im\alpha^\im)\overline{\beta}\!$
			\end{tabular}} & \small{\begin{tabular}[c]{@{}l@{}}
					$\Psi(\mathbf{e}(r_1\otimes\!1)\mathbf{e})\!=\!(\alpha-\im\alpha^\im)\beta\!$\\$\!-\Psi(\mathbf{e}(r_1\km\otimes\!1)\mathbf{e})\!=\!(\alpha+\im\alpha^\im)\overline{\beta}\!$
			\end{tabular}}  \\  \hline
			
			\small{$\xc\xr\xr$ \makecell{$\xymatrix@C=5ex@R=-1ex{u\ar[dr]^{\beta} & & \\ & v \ar@<.5ex>[r]^{\alpha}\ar@<-0.5ex>[r]_{\alpha^\im} & w\\ \overline{u}\ar[ur]_{\overline{\beta}}& &}$}} &  \small{\begin{tabular}[c]{@{}l@{}}
					$\Psi(\mathbf{e}(r_0\otimes\!1)\mathbf{e})e_u\!=\!(\alpha+\im\alpha^\im)\beta\!$\\$\Psi(\mathbf{e}(r_0\otimes\!1)\mathbf{e})e_{\overline{u}}\!=\!(\alpha-\im\alpha^\im)\overline{\beta}\!$
			\end{tabular}} &   \small{\begin{tabular}[c]{@{}l@{}}
					$\Psi(\mathbf{e}(r_1\otimes\!1)\mathbf{e})e_u\!=\!(\alpha-\im\alpha^\im)\beta\!$\\$\Psi(\mathbf{e}(r_1\otimes\!1)\mathbf{e})e_{\overline{u}}\!=\!(\alpha+\im\alpha^\im)\overline{\beta}\!$
			\end{tabular}}   \\  \hline
			
			\small{$\xr\xh\xh$ \makecell{$\xymatrix@C=5ex@R=-1ex{u\ar@<.5ex>[r]^{\beta}\ar@<-0.5ex>[r]_{\overline{\beta}} & v \ar@<.5ex>[r]^{\alpha}\ar@<-0.5ex>[r]_{\alpha^\im} & w}$}} &\small{\begin{tabular}[c]{@{}l@{}}
					$\Psi(\mathbf{e}(r_0\otimes\!1)\mathbf{e})\!=\!(\alpha+\im\alpha^\im)\beta\!$\\$\Psi(\mathbf{e}(\km r_0\otimes\!1)\mathbf{e})\!=\!(\alpha-\im\alpha^\im)\overline{\beta}\!$
			\end{tabular}} & \small{\begin{tabular}[c]{@{}l@{}}
					$\Psi(\mathbf{e}(r_1\otimes\!1)\mathbf{e})\!=\!(\alpha+\im\alpha^\im)\overline{\beta}\!$\\$\!-\Psi(\mathbf{e}(\km r_1\otimes\!1)\mathbf{e})\!=\!(\alpha-\im\alpha^\im)\beta\!$
			\end{tabular}}\\ \hline
			
			\small{$\xc\xh\xh$ \makecell{$\xymatrix@C=5ex@R=-1ex{u\ar[dr]^{\beta} & & \\ & v \ar@<.5ex>[r]^{\alpha}\ar@<-0.5ex>[r]_{\alpha^\im} & w\\ \overline{u}\ar[ur]_{\overline{\beta}}& &}$}} & \small{\begin{tabular}[c]{@{}l@{}}
					$\Psi(\mathbf{e}(r_0\otimes\!1)\mathbf{e})\!=\!(\alpha+\im\alpha^\im)\beta\!$\\$\Psi(\mathbf{e}(\km r_0\otimes\!1)\mathbf{e})\!=\!(\alpha-\im\alpha^\im)\overline{\beta}\!$
			\end{tabular}} & \small{\begin{tabular}[c]{@{}l@{}}
					$\Psi(\mathbf{e}(r_1\otimes\!1)\mathbf{e})\!=\!(\alpha+\im\alpha^\im)\overline{\beta}\!$\\$\!-\Psi(\mathbf{e}(\km r_1\otimes\!1)\mathbf{e})\!=\!(\alpha-\im\alpha^\im)\beta\!$
			\end{tabular}} \\ \hline
			
			\small{$\xh\xr\xh$ \makecell{$\xymatrix@C=5ex@R=-1ex{u\ar@<.5ex>[r]^{\beta}\ar@<-0.5ex>[r]_{\overline{\beta}} & v \ar@<.5ex>[r]^{\alpha}\ar@<-0.5ex>[r]_{\overline{\alpha}} & w}$}} &\small{\begin{tabular}[c]{@{}l@{}}
					$\Psi(\mathbf{e}(r_0\otimes\!1)\mathbf{e})/2\!=\!\alpha\beta\!$\\$\!-\Psi(\mathbf{e}(\km r_0\km\otimes\!1)\mathbf{e})/2\!=\!\overline{\alpha}\overline{\beta}\!$
			\end{tabular}} & \small{\begin{tabular}[c]{@{}l@{}}
					$\Psi(\mathbf{e}(\km r_1\otimes\!1)\mathbf{e})/2\!=\!\overline{\alpha}\beta\!$\\$\!-\Psi(\mathbf{e}(r_1\km\otimes\!1)\mathbf{e})/2\!=\!\alpha\overline{\beta}\!$
			\end{tabular}} \\ \hline
			
			\small{$\xh\xr\xc$ \makecell{$\xymatrix@C=5ex@R=-1ex{ & & w\\ u\ar@<.5ex>[r]^{\beta}\ar@<-0.5ex>[r]_{\overline{\beta}}& v \ar[ur]^{\alpha}\ar[dr]_{\overline{\alpha}} & \\ & &\overline{w}}$}}  & \small{\begin{tabular}[c]{@{}l@{}}
					$\Psi(\mathbf{e}(r_0\otimes\!1)\mathbf{e})/2\!=\!\alpha\beta\!$\\$\!-\Psi(\mathbf{e}(r_0\km\otimes\!1)\mathbf{e})/2\!=\!\overline{\alpha}\overline{\beta}\!$
			\end{tabular}} & \small{\begin{tabular}[c]{@{}l@{}}
					$\Psi(\mathbf{e}(r_1\otimes\!1)\mathbf{e})/2\!=\!\overline{\alpha}\beta\!$\\$\!-\Psi(\mathbf{e}(r_1\km\otimes\!1)\mathbf{e})/2\!=\!\alpha\overline{\beta}\!$
			\end{tabular}} \\  \hline
			
			\small{$\xc\xr\xh$ \makecell{$\xymatrix@C=5ex@R=-1ex{u\ar[dr]^{\beta} & & \\ & v \ar@<.5ex>[r]^{\alpha}\ar@<-0.5ex>[r]_{\overline{\alpha}} & w\\ \overline{u}\ar[ur]_{\overline{\beta}}& &}$}}  & \small{\begin{tabular}[c]{@{}l@{}}
					$\Psi(\mathbf{e}(r_0\otimes\!1)\mathbf{e})/2\!=\!\alpha\beta\!$\\$\Psi(\mathbf{e}(\km r_0\otimes\!1)\mathbf{e})/2\!=\!\overline{\alpha}\overline{\beta}\!$
			\end{tabular}} & \small{\begin{tabular}[c]{@{}l@{}}
					$\Psi(\mathbf{e}(r_1\otimes\!1)\mathbf{e})/2\!=\!\alpha\overline{\beta}\!$\\$\Psi(\mathbf{e}(\km r_1\otimes\!1)\mathbf{e})/2\!=\!\overline{\alpha}\beta\!$
			\end{tabular}}\\ \hline
			
			\small{$\xc\xr\xc$ \makecell{$\xymatrix@C=5ex@R=-1ex{u\ar[dr]^{\beta} & & w\\ & v \ar[ur]^{\alpha}\ar[dr]_{\overline{\alpha}} & \\ \overline{u}\ar[ur]_{\overline{\beta}}& &\overline{w}}$}}  & \small{\begin{tabular}[c]{@{}l@{}}
					$\!e_w\Psi(\mathbf{e}(r_0\otimes\!1)\mathbf{e})e_u/2\!=\!\alpha\beta\!$\\$\!e_{\overline{w}}\Psi(\mathbf{e}(r_0\otimes\!1)\mathbf{e})e_{\overline{u}}/2\!=\!\overline{\alpha}\overline{\beta}\!$
			\end{tabular}} & \small{\begin{tabular}[c]{@{}l@{}}
					$\!e_w\Psi(\mathbf{e}(r_1\otimes\!1)\mathbf{e})e_{\overline{u}}/2\!=\!\alpha\overline{\beta}\!$\\$\!e_{\overline{w}}\Psi(\mathbf{e}(r_1\otimes\!1)\mathbf{e})e_u/2\!=\!\overline{\alpha}\beta\!$
			\end{tabular}} \\  \hline
			
			\small{$\xr\xh\xc$ \makecell{$\xymatrix@C=5ex@R=-1ex{ & & w\\ u\ar@<.5ex>[r]^{\beta}\ar@<-0.5ex>[r]_{\overline{\beta}}& v \ar[ur]^{\alpha}\ar[dr]_{\overline{\alpha}} & \\ & &\overline{w}}$}}  & \small{\begin{tabular}[c]{@{}l@{}}
					$\!e_w\Psi(\mathbf{e}(r_0\otimes\!1)\mathbf{e})\!=\!\alpha\beta\!$\\$\!e_{\overline{w}}\Psi(\mathbf{e}(r_0\otimes\!1)\mathbf{e})\!=\!\overline{\alpha}\overline{\beta}\!$
			\end{tabular}} & \small{\begin{tabular}[c]{@{}l@{}}
					$\!e_w\Psi(\mathbf{e}(r_1\otimes\!1)\mathbf{e})\!=\!\alpha\overline{\beta}\!$\\$\!-e_{\overline{w}}\Psi(\mathbf{e}(r_1\otimes\!1)\mathbf{e})\!=\!\overline{\alpha}\beta\!$
			\end{tabular}} \\  \hline
			
			\small{$\xc\xh\xr$ \makecell{$\xymatrix@C=5ex@R=-1ex{u\ar[dr]^{\beta} & & \\ & v \ar@<.5ex>[r]^{\alpha}\ar@<-0.5ex>[r]_{\overline{\alpha}} & w\\ \overline{u}\ar[ur]_{\overline{\beta}}& &}$}}& \small{\begin{tabular}[c]{@{}l@{}}
					$\!\Psi(\mathbf{e}(r_0\otimes\!1)\mathbf{e})e_u\!=\!\alpha\beta\!$\\$\!\Psi(\mathbf{e}(r_0\otimes\!1)\mathbf{e})e_{\overline{u}}\!=\!\overline{\alpha}\overline{\beta}\!$
			\end{tabular}} & \small{\begin{tabular}[c]{@{}l@{}}
					$\!-\Psi(\mathbf{e}(r_1\otimes\!1)\mathbf{e})e_u\!=\!\alpha\overline{\beta}\!$\\$\!\Psi(\mathbf{e}(r_1\otimes\!1)\mathbf{e})e_{\overline{u}}\!=\!\overline{\alpha}\beta\!$
			\end{tabular}}\\ \hline
			
			\small{$\xc\xh\xc$ \makecell{$\xymatrix@C=5ex@R=-1ex{u\ar[dr]^{\beta} & & w\\ & v \ar[ur]^{\alpha}\ar[dr]_{\overline{\alpha}} & \\ \overline{u}\ar[ur]_{\overline{\beta}}& &\overline{w}}$}}  & \small{\begin{tabular}[c]{@{}l@{}}
					$\!e_w\Psi(\mathbf{e}(r_0\otimes\!1)\mathbf{e})e_u\!=\!\alpha\beta\!$\\$\!e_{\overline{w}}\Psi(\mathbf{e}(r_0\otimes\!1)\mathbf{e})e_{\overline{u}}\!=\!\overline{\alpha}\overline{\beta}\!$
			\end{tabular}} & \small{\begin{tabular}[c]{@{}l@{}}
					$\!e_w\Psi(\mathbf{e}(r_1\otimes\!1)\mathbf{e})e_{\overline{u}}\!=\!\alpha\overline{\beta}\!$\\$\!e_{\overline{w}}\Psi(\mathbf{e}(r_1\otimes\!1)\mathbf{e})e_u\!=\!\overline{\alpha}\beta\!$
			\end{tabular}} \\  \hline
			
			\small{$\xr\xh\xr$ \makecell{$\xymatrix@C=5ex@R=-1ex{u\ar@<.5ex>[r]^{\beta}\ar@<-0.5ex>[r]_{\overline{\beta}} & v \ar@<.5ex>[r]^{\alpha}\ar@<-0.5ex>[r]_{\overline{\alpha}} & w}$}}  & \small{\begin{tabular}[c]{@{}l@{}}
					$\!\Psi(\mathbf{e}(r_0\otimes\!1-r_0^\im\otimes\im)\mathbf{e})/2\!=\!\alpha\beta\!$\\$\!\Psi(\mathbf{e}(r_0\otimes\!1+r_0^\im\otimes\im)\mathbf{e})/2\!=\!\overline{\alpha}\overline{\beta}\!$
			\end{tabular}} & \small{\begin{tabular}[c]{@{}l@{}}
					$\!\Psi(\mathbf{e}(r_1\otimes\!1-r_1^\im\otimes\im)\mathbf{e})/2\!=\!\alpha\overline{\beta}\!$\\$\!\Psi(\mathbf{e}(r_1\otimes\!1+r_1^\im\otimes\im)\mathbf{e})/2\!=\!\overline{\alpha}\beta\!$
			\end{tabular}} \\  \hline
		\end{tabular}\medskip
		\caption{Local complexified quivers and elements in relations}
		\label{psi(ab)}
	\end{table}

	\begin{prop}\label{gentle-ver}
		Let $v$ be a gentle vertex in $T(Q,\mathcal{M})/I$ and $\xc\Gamma/J$ be its complexified quiver presentation. Then there is a $\xc$-algebra isomorphism $\xc\Gamma/J\simeq\xc\Gamma/J_v$ such that $v$ and $\overline{v}$ (if exists) are gentle vertices in $\xc\Gamma/J_v$.
	\end{prop}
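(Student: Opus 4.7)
The plan is to perform a case analysis on the type of the gentle vertex $v$: ordinarily gentle with $\xr$, with $\xh$, or with $\xc$, or one of the $18$ specially gentle cases tabulated in Table~\ref{spgenv}. In each case, the fiber $\pi^{-1}(v)\subseteq\Gamma$ is either the singleton $\{v\}$ (when $\mathcal{M}(v) \in \{\xr, \xh\}$) or the pair $\{v, \overline{v}\}$ (when $\mathcal{M}(v) = \xc$), and the arrows of $\Gamma$ incident to this fiber are determined by the ten rules for arrow construction in Section~2.2. Condition (G1) for the fiber vertices is then an immediate count: the hypothesis $|v^\pm|\le 2$ on $Q$ combined with the fact that each arrow at $v$ produces at most two arrows in $\Gamma$, at most one of which ends (or starts) at each fiber vertex, yields $|w^\pm|\le 2$ for every $w\in\pi^{-1}(v)$.

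The content of the proposition lies in arranging the relations. If $v$ is ordinarily gentle with $\xr$ or $\xh$, all incident arrows are of type (1) or (2) in Section~2.2, so $\pi$ restricts to a bijection on arrows at $v$, and the bimodule relation $\mathcal{M}(\alpha\beta)\subseteq I$ (or $\not\subseteq I$) translates directly under $\Psi$ into the monomial relation $\alpha\beta\in J$ (or $\notin J$). Hence $J_v := J$ already witnesses the conclusion. If $v$ is ordinarily gentle with $\xc$, the vertex splits into $v,\overline{v}$ and each incident arrow produces two parallel arrows over the two fibers of its non-$v$ endpoint; the extra condition (G3) is exactly what is needed to ensure that the image under $\Psi$ of $\mathcal{M}(\alpha\beta)\subseteq I$ (a direct sum of two paths, one at $v$ and one at $\overline{v}$) is either entirely zero or entirely in $J$, preventing any commuting relation linking the two fibers from appearing. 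Again $J_v:=J$ suffices.

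For the specially gentle cases the required isomorphism is nontrivial, and this is the main technical step. By hypothesis $I_v$ equals $I_v^0$ xor $I_v^1$, and Table~\ref{psi(ab)} records generators of $\Psi(\mathbf{e}(I_v^i\otimes\xc)\mathbf{e})$ in the corresponding row and column. In every row these generators are $\xc$-linear combinations, of the form $p_1\pm \im p_2$ or $p_1\pm p_2$, of paths of length $2$ through a fiber of $v$. I would define a $\xc$-algebra automorphism $\phi$ of $\xc\Gamma$ by rescaling and recombining only those arrows incident to $\pi^{-1}(v)$ (for instance, $\alpha\mapsto\alpha+\im\alpha^{\im}$, $\alpha^{\im}\mapsto\alpha-\im\alpha^{\im}$ for parallel arrows, and analogously for $\beta,\beta^{\im}$), so that the corresponding entries of the table become pure monomial paths of length $2$. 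Setting $J_v:=\phi(J)$, each fiber vertex of $v$ acquires exactly the monomial relations demanded by (G2); condition (G3) is vacuous in these cases because the modulation of $v$ in $Q$ is $\xr$ or $\xh$, so $\overline{v}$ does not exist.

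The main obstacle is twofold: first, the sheer volume of case-by-case verification built into Table~\ref{psi(ab)}, which has to be produced by direct computation using the explicit form of $\Psi$ from Section~2.3; and second, ensuring that the locally defined automorphisms $\phi$ are simultaneously $\xc$-algebra automorphisms of $\xc\Gamma$ rather than mere linear recombinations. The latter is handled by the observation that $\phi$ acts only on arrows whose source and target lie in $\pi^{-1}(v)\cup\pi^{-1}(N_v)$ and preserves the bigrading by source and target vertex, so it automatically extends to a graded $\xc$-algebra automorphism of the path algebra; the example computation for the $\xr\xh\xr$ case given in the proposition's proof serves as a template for the remaining seventeen cases.
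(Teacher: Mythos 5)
Your proposal follows essentially the same route as the paper: take $J_v=J$ (equivalently $d_v=\mathrm{Id}$) in the ordinarily gentle and source/sink cases, and in the parallel-arrow specially gentle cases define an automorphism of $\xc\Gamma$ recombining $\gamma,\gamma^{\im}$ so that the generators recorded in Table~\ref{psi(ab)} become monomial paths, then set $J_v$ to be the image of $J$ and read off (G1)--(G2). The only slip is that your sample formula $\alpha\mapsto\alpha+\im\alpha^{\im}$, $\alpha^{\im}\mapsto\alpha-\im\alpha^{\im}$ is the inverse of the map you need (the paper takes $d_v(\gamma)=(\gamma+\gamma^{\im})/2$, $d_v(\gamma^{\im})=(\gamma-\gamma^{\im})/2\im$, so that $\gamma+\im\gamma^{\im}\mapsto\gamma$); this is immediately fixable and does not affect the argument.
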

	\begin{proof}
		If $v$ is gentle in $T(Q,\mathcal{M})/I$, we give an isomorphism $d_v\colon\xc\Gamma\rightarrow\xc\Gamma$.
		
		\begin{enumerate}
			\item If $v$ is ordinarily gentle or specially gentle in the last 8 cases in Table~\ref{psi(ab)}, set $d_v=\mathrm{Id}$;
			\item If $v$ is specially gentle in the first 10 cases in Table \ref{psi(ab)} (i.e., $v^+=\{\gamma,\gamma^\im\}$ or $v^-=\{\gamma,\gamma^\im\}$ for some parallel arrows $\gamma,\gamma^\im$), $d_v$ is given as follows.\\
			For arrows $\gamma,\gamma^\im$ such that $v^+=\{\gamma,\gamma^\im\}$ or $v^-=\{\gamma,\gamma^\im\}$, set $$d_v(\gamma)=(\gamma+\gamma^\im)/2\mbox{ and }d_v(\gamma^\im)=(\gamma-\gamma^\im)/2\im.$$ $$(\mbox{So   }d_v(\gamma+\im\gamma^\im)=\gamma\mbox{ and }
			d_v(\gamma-\im\gamma^\im)=\gamma^\im.)$$ 
			For other vertices and arrows, set $d_v$ as identity.
		\end{enumerate}
		
		To prove the statement, we set $$J_v:=d_v(J).$$
		
		Assume that $v\in Q_0$ is an ordinarily gentle vertex with $\xr$ or $\xh$. Then $\mathcal{M}(u)=\mathcal{M}(v)=\xr \mbox{ or }\xh\mbox{, }\forall u\in N_v$. The vertex $v\in Q_0$ has only one fiber $v$ in $\Gamma$ and each path $u\overset{\beta}{\rightarrow}v\overset{\alpha}{\rightarrow}w$ in $Q$ also has only one fiber $u\overset{\beta}{\rightarrow}v\overset{\alpha}{\rightarrow}w$ in $\Gamma$. Hence $|v^+|\leq 2$ and $|v^-|\leq 2$ in $\Gamma$. By Proposition \ref{ios after complexification}, $I$ includes $\mathcal{M}(\alpha\beta)$ if and only if $\alpha\beta\in J$. Then condition (G2) for $v$ being gentle in $T(Q,\mathcal{M})/I$ implies condition (G2) for $v$ in $\xc\Gamma/J_v=\xc\Gamma/J$. 
		
		Assume that $v\in Q_0$ is ordinarily gentle with $\xc$. By the construction of $\Gamma$, there are two fibers $v$ and $\overline{v}$ in $\Gamma$. Each arrow $\alpha\in Q_1$ starting at $v$ gives two fibers $\alpha$ and $\overline{\alpha}$ in $\Gamma$ with one stating at $v$ and the other at $\overline{v}$. Each arrow $\beta\in Q_1$ ending at $v$ gives two fibers $\beta$ and $\overline{\beta}$ in $\Gamma$ with one ending at $v$ and the other at $\overline{v}$. We have $|v^+|=|\overline{v}^+|$ (in $\Gamma$)$=|v^+|$ (in $Q$) and $|v^-|=|\overline{v}^-|$ (in $\Gamma$) $=|v^-|$ (in $Q$). Hence in $\Gamma$, $|v^+|=|\overline{v}^+|\leq 2$ and $|v^-|=|\overline{v}^-|\leq 2$.
		
		As above, for each $u\overset{\beta}{\rightarrow}v\overset{\alpha}{\rightarrow}w$ in $Q$, there are exactly two fibers in $\Gamma$, which can be denoted by $p$ (the one passing $v$) and $\tau(p)$ (the one passing $\overline{v}$). By \cite[Remark 3.6]{LJ2023} and \cite[Lemma 4.6]{LJ2023}, $I$ includes $\mathcal{M}(\alpha\beta)$ if and only if $p\in J$ if and only if $\tau(p)\in J$. Then condition (G2) for $v$ being gentle in $T(Q,\mathcal{M})/I$ implies condition (G2) for $v$ and $\overline{v}$ in $\xc\Gamma/J_v=\xc\Gamma/J$.
		
		Assume that $v\in Q_0$ is a specially gentle vertex satisfying condition (2). So $v$ is a source or sink in both $Q$ and $\Gamma$ ($\overline{v}$ does not exist). We only need to confirm that $v$ satisfies (G1) in $\xc\Gamma/J_v=\xc\Gamma/J$. This can be checked case by case in Table~\ref{psi(ab)}. 
		
		Assume that $v\in Q_0$ is a specially gentle vertex satisfying condition (1). It gives only one fiber $v$ in $\Gamma$. And $|v^+|=|v^-|=2$ as shown in Table \ref{psi(ab)}. Hence, (G1) holds for $v$. In $\xc\Gamma$, $$\underset{\alpha\in v^+, \beta\in v^-}{\bigoplus}\xc\alpha\beta=\Psi(\mathbf{e}(I_v^0\otimes_\xr\xc)\mathbf{e})\oplus\Psi(\mathbf{e}(I_v^1\otimes_\xr\xc)\mathbf{e}).$$ By the results in Table \ref{psi(ab)} and the isomorphism $d_v$, we have $I_v^0\subseteq I$ if and only if $\alpha\beta\in J_v$ if and only if $\overline{\alpha}\overline{\beta}$, $\alpha^\im\overline{\beta}$, $\overline{\alpha}\beta^\im$ or $\alpha^\im\beta^\im\in J_v$, according to the case in which $v$ is. Also, $I_v^1\subseteq I$ if and only if $\alpha^\im\beta$ or
		$\overline{\alpha}\beta\in J_v$ if and only if $\alpha\overline{\beta}$ or $\alpha\beta^\im\in J_v$, according to the case in which $v$ is. Therefore, $v$ satisfies (G2) in $\xc\Gamma/J_v$.
		
		Finally, to obtain the results in Table \ref{psi(ab)}, we perform the computation for the case $(\xr\xh\xr)$ as an example. Other cases can also be checked. Recall from Section~2.3 the isomorphism $\Psi\colon \mathbf{e}(T(Q, \mathcal{M})\otimes_{\xr}\xc)\mathbf{e}\longrightarrow\xc \Gamma$ used here.
		
		In the case $(\xr\xh\xr)$, we have
		\begin{align*}
			&\mathbf{e}((\begin{bmatrix}1&0\\0&1\end{bmatrix}\otimes\begin{bmatrix}a&b\\-\overline{b}&\overline{a}\end{bmatrix}\otimes1_{\xc})\mathbf{e}\overset{\psi_1}{\mapsto}(\begin{bmatrix}1&0\\0&1\end{bmatrix}\otimes1_{\xc})\otimes(\begin{bmatrix}a&b\\-\overline{b}&\overline{a}\end{bmatrix}\otimes1_{\xc})\\
			\overset{\theta}{\mapsto}&\begin{bmatrix}1&0\\0&1\end{bmatrix}\otimes\begin{bmatrix}a&b\\-\overline{b}&\overline{a}\end{bmatrix}\overset{\psi_2}{\mapsto}\begin{bmatrix}1&0\\0&0\end{bmatrix}\otimes\begin{bmatrix}a&b\\0&0\end{bmatrix}+\begin{bmatrix}0&0\\1&0\end{bmatrix}\otimes\begin{bmatrix}-\overline{b}&\overline{a}\\0&0\end{bmatrix}\\
			\overset{\psi_3}{\mapsto}&\alpha(a\beta+b\overline{\beta})+\overline{\alpha}(-\overline{b}\beta+\overline{a}\overline{\beta})=a\alpha\beta+\overline{a}\overline{\alpha}\overline{\beta}+b\alpha\overline{\beta}-\overline{b}\overline{\alpha}\beta.
		\end{align*} Hence
		$$\Psi(\mathbf{e}((1_{\xh}\otimes1_\xh)\otimes_{\xr}1_\xc)\mathbf{e})=\alpha\beta+\overline{\alpha}\overline{\beta}\mbox{ , } \Psi(\mathbf{e}((1_{\xh}\otimes \jm)\otimes_{\xr}1_\xc)\mathbf{e})=\im\alpha\beta-\im\overline{\alpha}\overline{\beta},$$
		$$\Psi(\mathbf{e}((1_{\xh}\otimes \km)\otimes_{\xr}1_\xc)\mathbf{e})=\alpha\overline{\beta}-\overline{\alpha}\beta\mbox{ , } \Psi(\mathbf{e}((1_{\xh}\otimes \lm)\otimes_{\xr}1_\xc)\mathbf{e})=\im\alpha\overline{\beta}+\im\overline{\alpha}\beta.$$ 
		Then we obtain the results in Table \ref{psi(ab)}. It is easy to see that if $I_v^0\subseteq I$, then $\alpha\beta, \overline{\alpha}\overline{\beta}\in J$; and that if $I_v^1\subseteq I$, then $\alpha\overline{\beta}, \overline{\alpha}\beta\in J$. Since $d_v(\alpha\beta)=\alpha\beta$, $v$ satisfies (G2) in $\xc\Gamma/J_v=\xc\Gamma/J$.
	\end{proof}

	\medskip
	
	\subsection{Locally complexified-gentle algebras of uniform type and of special type}
	In this subsection, we introduce two types of locally complexified-gentle algebras. 
	
	\begin{thm}\label{main}
		Let $T(Q,\mathcal{M})/I$ be an $\xr$-algebra satisfying one of the following conditions: \begin{enumerate}
			\item each vertex is ordinarily gentle with $\xr$ in $T(Q,\mathcal{M})/I$ and the      ideal $I$ is generated by $I_v, v\in Q_0$.
			\item each vertex is ordinarily gentle with $\xh$ in $T(Q,\mathcal{M})/I$ and the ideal $I$ is generated by $I_v, v\in Q_0$.
			\item each vertex is ordinarily gentle with $\xc$ or specially gentle in         $T(Q,\mathcal{M})/I$ and the ideal $I$ is generated by $I_v, v\in Q_0$.
		\end{enumerate} Then $T(Q,\mathcal{M})/I$ is locally complexified-gentle.
	\end{thm}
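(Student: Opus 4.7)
The plan is to reduce the statement to the already-developed complexified quiver machinery and then apply Proposition~\ref{gentle-ver} simultaneously at every vertex. By the Morita equivalence theorem at the end of Section~2.2, $T(Q,\mathcal{M})/I\otimes_{\xr}\xc$ is Morita equivalent to $\xc\Gamma/J$, so it suffices to exhibit a $\xc$-algebra isomorphism $\xc\Gamma/J\simeq\xc\Gamma/\tilde J$ such that $\xc\Gamma/\tilde J$ is locally gentle in the sense of Definition~\ref{usual defn}. Concretely, this boils down to producing a single automorphism $d$ of $\xc\Gamma$ for which every vertex of $\xc\Gamma/d(J)$ satisfies (G1)--(G2) and $d(J)$ is generated by paths of length two.

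Next, I would construct $d$ by gluing the local isomorphisms $d_v$ supplied by Proposition~\ref{gentle-ver}. For cases (1) and (2) of the theorem, every vertex is ordinarily gentle with $\xr$ or $\xh$, so each $d_v$ is the identity and one simply takes $d=\mathrm{Id}$; in this situation $\Gamma$ is essentially $Q$ (no vertex splits) and the statement is almost direct. For case (3), the only $d_v$ that differ from the identity are those attached to specially gentle vertices falling under the first ten cases of Table~\ref{spgenv}, where $d_v$ acts on a pair of parallel arrows by $\gamma\mapsto(\gamma+\gamma^{\im})/2$ and $\gamma^{\im}\mapsto(\gamma-\gamma^{\im})/(2\im)$. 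The key combinatorial observation is that two distinct specially gentle vertices $v\neq v'$ (in these ten cases) can be attached to the same unordered pair of parallel arrows only if both $v$ and $v'$ are the two endpoints of that pair; since each such vertex forces $v^+$ or $v^-$ to equal exactly one such pair and $|v^\pm|\leq 2$, the two pairs then coincide. Consequently, running the transformation once per parallel pair glues the $d_v$ into a well-defined automorphism $d$ of $\xc\Gamma$.

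Setting $\tilde J:=d(J)$, I would then verify the gentle conditions in $\xc\Gamma/\tilde J$. Because $d$ agrees with each $d_v$ on arrows adjacent to $v$ and is the identity elsewhere, the local analysis in Proposition~\ref{gentle-ver} shows that every gentle vertex $v\in Q_0$ (together with its fiber $\overline{v}$, if it exists) satisfies (G1) and (G2) in $\xc\Gamma/\tilde J$. For (G3), since $I$ is generated by the sub-bimodules $I_v\subseteq\bigoplus_{\alpha\in v^+,\beta\in v^-}\mathcal{M}(\alpha\beta)$, Proposition~\ref{ios after complexification} yields that $J$ is generated by $\xc$-linear combinations of length-two paths in $\xc\Gamma$; the entries of Table~\ref{psi(ab)}, pulled through $d$, show that those linear combinations become individual length-two paths in $\xc\Gamma/\tilde J$, producing a generating set of paths of length two as required. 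Hence $\xc\Gamma/\tilde J$ is locally gentle, and therefore $T(Q,\mathcal{M})/I$ is locally complexified-gentle.

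The main obstacle I foresee lies in the second and third steps: verifying that the patched automorphism $d$ is simultaneously compatible at every specially gentle vertex, and confirming case by case that the entries of Table~\ref{psi(ab)} really do collapse to individual paths of length two after passing through $d$. The ordinarily gentle cases and the last eight rows of Table~\ref{psi(ab)} (where $d_v=\mathrm{Id}$) are routine, but the first ten rows demand a careful case analysis to rule out the possibility that the basis change at one vertex disrupts the gentle structure at an adjacent one; the combinatorial observation about disjointness of parallel pairs is the crux that makes this bookkeeping tractable.
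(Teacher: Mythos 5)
Your proposal is correct and follows essentially the same route as the paper: reduce via the Morita equivalence to $\xc\Gamma/J$, glue the local isomorphisms $d_v$ from Proposition~\ref{gentle-ver} into a global automorphism $d$ (with the same well-definedness observation that both endpoints of a parallel pair are specially gentle, so the basis change is assigned consistently), and then check (G1)--(G2) locally and (G3) via Table~\ref{psi(ab)}. No substantive difference from the paper's argument.
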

	\begin{proof}
		Let $\xc\Gamma/J$ be the complexified quiver presentation of $T(Q,\mathcal{M})/I$.
		
		If $T(Q,\mathcal{M})/I$ satisfies condition (1) (or (2)), that is, $\mathcal{M}(v)=\xr$ (respectively, $=\xh$) for each vertex $v$, $\mathcal{M}(\alpha)=\xr$ (respectively, $=\xh$) for each arrow $\alpha$ and $I=\langle\mathcal{M}(p_1), \mathcal{M}(p_2),\dots,\mathcal{M}(p_n)\rangle$ for some paths $p_1,p_2,\dots,p_n$ of length two in $Q$. By Proposition \ref{gentle-ver}, each vertex is a gentle vertex in the modulated quiver presentation $\xc\Gamma/J=\xc Q/J$. By Proposition \ref{ios after complexification}, $J$ is generated by $p_1, p_2, \dots, p_n $. Hence $\xc\Gamma /J$ is a locally gentle algebra.
		
		Now assume that $T(Q,\mathcal{M})/I$ satisfies condition (3). We give an isomorphism of algebras $d:\xc\Gamma\rightarrow\xc\Gamma$. For parallel arrows $\gamma,\gamma^\im$ in the first 10 cases in Table~\ref{psi(ab)} (i.e. $t(\gamma)=t(\gamma^\im)=v$ or $s(\gamma)=s(\gamma^\im)=v$ with $\mathcal{M}(s(\gamma))=\mathcal{M}(t(\gamma))$ for each specially gentle vertex $v$ in $T(Q,\mathcal{M})/I$), set $$d(\gamma+\im\gamma^\im)=\gamma \mbox{ and } d(\gamma-\im\gamma^\im)=\gamma^\im.$$ For other vertices and arrows, set $d$ as identity. This map is well-defined, since the endpoints of these parallel arrows in $Q$ are both specially gentle vertices. 
		
		The construction of $d\colon\xc\Gamma\rightarrow\xc\Gamma$ coincides with $d_v$ in the proof of Proposition~\ref{gentle-ver} when restricted to $v$, $N_v$, $v^-$ and $v^+$. So, each vertex $v$ in $\xc\Gamma/d(J)$ satisfies (G1) and (G2) of Definition~\ref{usual defn} by Proposition~\ref{gentle-ver}. 
		
		By assumption, $I$ is generated by $I_v$, $v\in Q_0$. If $v$ is specially gentle and $I^0_v\subset I$ (or $I^1_v\subset I$) then $\alpha\beta, \alpha^\circ\beta^\bullet\in d(J)$ (respectively, $\alpha^\circ\beta, \alpha\beta^\bullet\in d(J)$), where $\alpha^\circ=\overline{\alpha}$ or $\alpha^\im$, and $\beta^\bullet=\overline{\beta}$ or $\beta^\im$, depending on the situation in which $v$ is. If $v$ is ordinarily gentle with $\xc$, all fibers of $\{\alpha\beta\,|\,s(\alpha)=t(\beta)=v, \mathcal{M}(\alpha\beta)=I_v\}$ in $\Gamma$ belong to $d(J)$. Thus, $d(J)$ can be generated by these paths of length two.
		
		Hence, $T(Q,\mathcal{M})/I\otimes_{\xr}\xc$ is Morita equivalent to the gentle algebra $\xc\Gamma/d(J)$.
	\end{proof}
	
	Since Morita equivalence is closed under complexification, we introduce two types of locally complexified-gentle algebra due to the above theorem.
	
	\begin{defn}\label{two types of cmp-gentle}
		Let $A$ be an $\xr$-algebra.
		\begin{enumerate}
			\item We call $A$ \textbf{locally complexified-gentle of uniform type} (with $\xr$) if it is Morita equivalent to some $T(Q,\mathcal{M})/I$ satisfying condition (1) in Theorem~\ref{main}.
			\item We call $A$ \textbf{locally complexified-gentle of uniform type} (with $\xh$) if it is Morita equivalent to some $T(Q,\mathcal{M})/I$ satisfying condition (2) in Theorem~\ref{main}.
			\item We call $A$ \textbf{locally complexified-gentle of special type} if it is Morita equivalent to some $T(Q,\mathcal{M})/I$ satisfying condition (3) in Theorem \ref{main}.
		\end{enumerate}
		Furthermore, if $A$ is finite-dimensional, we call it complexified-gentle of uniform type (with $\xr$ or with $\xh$) or of special type, respectively.
	\end{defn}

	Now we show that locally complexified-gentle algebras of uniform type are nothing but locally gentle algebras in the usual sense. We note that an algebra is called locally gentle over a division ring $D$ if it is Morita equivalent to $DQ/I$ for some quiver $Q$ and some ideal $I$ such that (G1), (G2), and (G3) in Definition~\ref{usual defn} hold, where $DQ$ can be viewed as the tensor algebra $T(\prod_{i\in Q_0} D e_i, \bigoplus_{\alpha\in Q_1}D \alpha)$ with $de_i=e_id$ and $d\alpha=\alpha d$, $\forall d\in D, i\in Q_0, \alpha\in Q_1$.
	
	As in \cite[Section 3.1]{LJ2023}, a modulation $\mathcal{M}$ on a quiver $Q$ over $\xr$ is called \textbf{v-uniform} with $D$ if $\mathcal{M}(i)=\mathcal{M}(j)=D$, where $D$ is $\xr$, $\xh$ or $\xc$ for any $i,j\in Q_0$. 
	
	Assume that $(Q,\mathcal{M})$ is a v-uniform modulated quiver with $D=\xr$ or $\xh$, and $p_i,i=1,2,\dots,n$ are paths of length two in $Q$. By \cite[Lemma 3.2]{LJ2023}, we have $$T(Q,\mathcal{M})/I\simeq DQ/\langle R \rangle,$$ for $I=\langle\mathcal{M}(p_i)\,|\, i=1,2,\dots,n\rangle$ and $R=\{p_i\,|\,i=1,2,\dots,n\}$. We have the following description about locally complexified-gentle algebras of uniform type.
	
	\begin{prop}\label{cpm-gen of uni}
		An $\xr$-algebra is a locally complexified-gentle algebra of uniform type if and only if it is a locally gentle algebra over $D$, where $D=\xr$ or $\xh$.
	\end{prop}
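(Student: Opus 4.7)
The plan is to prove this biconditional by a direct unpacking of Definition~\ref{two types of cmp-gentle} against Definition~\ref{usual defn}, using the quoted reduction $T(Q,\mathcal{M})/I \simeq DQ/\langle R\rangle$ from \cite[Lemma 3.2]{LJ2023}. Since complexified-gentle of uniform type is closed under Morita equivalence (Proposition~\ref{prop of cmp-gen alg}(1)) and the statement is also invariant under connected components, I work directly with a quiver presentation $T(Q,\mathcal{M})/I$ of the given algebra on each connected component.

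For the forward direction, assume each vertex $v$ of $T(Q,\mathcal{M})/I$ is ordinarily gentle with $D\in\{\xr,\xh\}$ and that $I=\langle I_v \mid v\in Q_0\rangle$. First I would observe that the condition $\mathcal{M}(u)=\mathcal{M}(v)=D$ for all $u\in N_v$, applied at every vertex, forces $\mathcal{M}$ to be v-uniform with $D$ on each connected component; and then every arrow $\alpha$ has $s(\alpha),t(\alpha)\in Q_0$ with modulation $D$, so $\mathcal{M}(\alpha)$ must be the unique simple $D$-$D$-bimodule, namely $D$ itself. Thus $(Q,\mathcal{M})$ is v-uniform and arrow-uniform with $D$, and by \cite[Lemma 3.2]{LJ2023} there is an isomorphism $T(Q,\mathcal{M})\simeq DQ$ sending each $\mathcal{M}(p)$ onto $Dp$. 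Under this isomorphism, $I$ maps to $\langle R\rangle$ where $R$ is the set of length-two paths $p$ with $\mathcal{M}(p)\subset I$; condition (G3) of Definition~\ref{usual defn} follows because each $I_v$ is a bimodule of paths of length two through $v$. Conditions (G1) and (G2) for $v$ in $DQ/\langle R\rangle$ are exactly the (G1), (G2) in the definition of ordinarily gentle with $D$, since $\mathcal{M}(\alpha\beta)=D\cdot\alpha\beta$ so that ``$I$ includes $\mathcal{M}(\alpha\beta)$'' translates to ``$\alpha\beta\in \langle R\rangle$''.

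For the backward direction, start with a locally gentle algebra $DQ/\langle R\rangle$ over $D\in\{\xr,\xh\}$, where $R$ consists of paths of length two satisfying (G1)--(G3). Define the modulation $\mathcal{M}$ on $Q$ by $\mathcal{M}(v)=D$ for $v\in Q_0$ and $\mathcal{M}(\alpha)=D$ for $\alpha\in Q_1$. Then $T(Q,\mathcal{M})=DQ$, and set $I=\langle \mathcal{M}(p)\mid p\in R\rangle$. Each vertex is ordinarily gentle with $D$: the (G1), (G2) of the usual definition translate back to the (G1), (G2) for ordinary gentleness. Finally, since $R$ consists of length-two paths, each $p\in R$ passes through a unique middle vertex $v(p)$ and $\mathcal{M}(p)\subset I_{v(p)}$, so $I$ is generated by $\{I_v\mid v\in Q_0\}$. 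Thus $DQ/\langle R\rangle$ satisfies condition~(1) or~(2) of Theorem~\ref{main}, whence the original algebra is locally complexified-gentle of uniform type with $D$.

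There is no serious obstacle; the argument is purely definitional. The only minor point requiring care is the translation between ``$I$ contains the bimodule $\mathcal{M}(\alpha\beta)$'' and ``$\alpha\beta\in \langle R\rangle$'', which is clean because each such bimodule is one-dimensional over $D$ after the identification $T(Q,\mathcal{M})\simeq DQ$, and the observation that ordinary gentleness at every vertex forces the modulation to be uniform (so no mixing of $\xr$ and $\xh$ occurs on a connected component).
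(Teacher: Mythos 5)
Your proof is correct and follows the same route the paper intends: the paper states this proposition without a written proof, treating it as immediate from the identification $T(Q,\mathcal{M})/I\simeq DQ/\langle R\rangle$ of \cite[Lemma 3.2]{LJ2023} quoted just before it, and your argument simply spells out that definitional translation (including the useful observation that ordinary gentleness at every vertex forces the modulation to be v-uniform and forces each arrow to carry the unique simple $D$-$D$-bimodule $D$). No gaps beyond those already implicit in the paper's own treatment.
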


	\medskip
	\subsection{Examples}
	In this subsection, we give some examples and conjecture that each complexified-gentle algebra is either of uniform type or of special type.
	
	The next example shows that a complexified-gentle algebra $T(Q,\mathcal{M})/I$ of uniform type cannot be of special type once $I\neq 0$.
	
	\begin{exm}\label{unimform diff from special}
		Assume that $Q=u\underset{\beta^\im}{\overset{\beta}{\rightrightarrows}} v\underset{\alpha^\im}{\overset{\alpha}{\rightrightarrows}} w$ and $(Q,\mathcal{M})$ is v-uniform with $\xr$. Let $$I=\langle \alpha\beta, \alpha^\im\beta^\im \rangle \mbox{ and } I'=\langle \alpha\beta\mp\alpha^\im\beta^\im, \alpha\beta^\im\pm\alpha^\im\beta \rangle$$ be two ideals of $\xr Q$. Then, via the canonical isomorphism $\xr Q\simeq T(Q,\mathcal{M})$ (see \cite[Lemma 3.2]{LJ2023}), $\xr Q/I$ is complexified-gentle of uniform type and $\xr Q/I'$ is complexified-gentle of special type. Moreover, their complexifications are both isomorphic to $\xc Q/I$ as $\xc$-algebras.
		
		Notice that $\xr Q/I$ is not Morita equivalent to $\xr Q/I'$. In fact, in $\xr Q/I$, there is an element $\lceil\alpha\rceil$ in $\lceil e_w\rceil\mathrm{rad}(\xr Q/I)\lceil e_v\rceil$ and an element $\lceil\beta\rceil$ in $\lceil e_v\rceil\mathrm{rad}(\xr Q/I)\lceil e_u\rceil$ such that $\lceil\alpha\beta\rceil=0$, where $\lceil x\rceil$ denotes $x+I$ or $x+I'$. If $\xr Q/I$ is Morita equivalent to $\xr Q/I'$,  they are isomorphic. Then there are non-zero elements in $\lceil e_w\rceil\mathrm{rad}(\xr Q/I')\lceil e_v\rceil$ and $\lceil e_v\rceil\mathrm{rad}(\xr Q/I')\lceil e_u\rceil$, say $\lceil r_1\alpha+r_2\alpha^\im\rceil$ and $\lceil r_3\beta+r_4\beta^\im\rceil$ respectively, such that $$ (r_1\alpha+r_2\alpha^\im)(r_3\beta+r_4\beta^\im)=r_5(\alpha\beta\mp\alpha^\im\beta^\im)+r_6 (\alpha\beta^\im\pm\alpha^\im\beta)$$
        in $\xr Q$ for some $r_1, \dots, r_6\in\xr$. However, each of these two equations with variables $r_1,\dots, r_6$ has no solution in $\xr$ such that $r_1r_2\neq 0$ and $r_3r_4\neq 0$.
		
		If we assume that $(Q,\mathcal{M})$ is v-uniform with $\xh$, then $\xh Q\simeq T(Q,\mathcal{M})$. Take $I$ as above and $I'=\langle \alpha\beta\mp\alpha^\im\beta^\im\pm\alpha\beta^\im+\alpha^\im\beta\rangle$. Then $\xh Q/I$ is complexified-gentle of uniform type and $\xh Q/I'$ is complexified-gentle of special type. We can show in a similar way that they are not Morita equivalent.
    \end{exm}

    For a specially gentle vertex $v$, conditions $I_v=I_v^0$ and $I_v=I_v^1$ sometimes give the same algebra up to isomorphism. In the above example, $\xr Q/\langle \alpha\beta+\alpha^\im\beta^\im, \alpha\beta^\im-\alpha^\im\beta \rangle$ is isomorphic to $\xr Q/\langle \alpha\beta-\alpha^\im\beta^\im, \alpha\beta^\im+\alpha^\im\beta \rangle$. But in some cases, these two conditions make an essential difference.

    \begin{exm}
        Assume that $$(Q,\mathcal{M})=\makecell{\xymatrix@C=4ex@R=1ex{ & \xc_w \ar[dl]_{\xc_{\gamma}} & \\ \xr_u\ar[rr]_{\xh_{\alpha}} & & \xh_v\ar[ul]_{\xc^2_{\beta}}}}, I=\langle \mathcal{M}(\gamma\beta), I_u^0, I_v^0\rangle, I'=\langle \mathcal{M}(\gamma\beta), I_u^0, I_v^1\rangle.$$
        Then $T(Q,\mathcal{M})/I$ and $T(Q,\mathcal{M})/I'$ are both complexified-gentle of special type. We claim that they are not isomorphic as $\xr$-algebras. Otherwise their complexified quiver presentations $\xc\Gamma/J$ and $\xc\Gamma/J'$ should be isomorphic as $\xc$-algebras. However, $\xc\Gamma/J$ and $\xc\Gamma/J'$ are given as follows:
        $$\Gamma=\makecell{\xymatrix@C=4ex@R=2ex{ & w \ar[dl]_{\gamma} & \\ u\ar@<.4ex>[rr]^{\alpha}\ar@<-0.4ex>[rr]_{\overline{\alpha}} & & v\ar[ul]_{\beta}\ar[dl]^{\overline{\beta}}\\ & \overline{w}\ar[ul]^{\overline{\gamma}}&}},J=\langle\gamma\beta,\overline{\gamma}\overline{\beta},\alpha\gamma,\overline{\alpha}\overline{\gamma},\beta\alpha,\overline{\beta}\overline{\alpha}\rangle, J'=\langle\gamma\beta,\overline{\gamma}\overline{\beta},\alpha\gamma,\overline{\alpha}\overline{\gamma},\overline{\beta}\alpha,\beta\overline{\alpha}\rangle.$$ They should not be isomorphic since there is a non-zero non-trivial circle $\beta\overline{\alpha}\gamma$ in $\xc\Gamma/J'$ (which gives a non-zero non-isomorphic endomorphism for some projective module), while there are no such circles in $\xc\Gamma/J$.
    \end{exm}
	
	In the following proposition, we give some examples of $\xr$-algebras that are not complexified-gentle. 
	
	\begin{prop}\label{notgentle}
		Assume that $T(Q,\mathcal{M})/I$ is in one of the following situations: 
		
		\begin{enumerate}
			\item[\rm (1)] $(Q,\mathcal{M})=\xr\overset{\xr}{\rightarrow}\xr\overset{\xh}{\rightarrow}\xh,\;\xr\overset{\xr}{\rightarrow}\xr\overset{\xc}{\rightarrow}\xc,\;\xh\overset{\xh}{\rightarrow}\xh\overset{\xh}{\rightarrow}\xr\mbox{ or }\xh\overset{\xh}{\rightarrow}\xh\overset{\xc^2}{\rightarrow}\xc,$

			\item[\rm (2)] 
			$(Q,\mathcal{M})=\xr\overset{\xr}{\rightarrow} \xr_u\underset{\xr}{\overset{\xr}{\rightrightarrows}} \xr_v\underset{\xr}{\overset{\xr}{\rightrightarrows}} \xr,\;
			\xr\overset{\xr}{\rightarrow} \xr_u\underset{\xr}{\overset{\xr}{\rightrightarrows}} \xr_v\overset{\xh}{\rightarrow} \xh,\; \xr\overset{\xr}{\rightarrow} \xr_u\underset{\xr}{\overset{\xr}{\rightrightarrows}} \xr_v\overset{\xc}{\rightarrow} \xc,\; $
			
			\hspace{20pt}$\xh\overset{\xh}{\rightarrow} \xh_u\underset{\xh}{\overset{\xh}{\rightrightarrows}} \xh_v\overset{\xh}{\rightarrow} \xr,\; \xh\overset{\xh}{\rightarrow} \xh_u\underset{\xh}{\overset{\xh}{\rightrightarrows}} \xh_v\underset{\xh}{\overset{\xh}{\rightrightarrows}} 
			\xh\mbox{ or }\xh\overset{\xh}{\rightarrow} \xh_u\underset{\xh}{\overset{\xh}{\rightrightarrows}} \xh_v\overset{\xc^2}{\rightarrow} \xc;$\\
			with $u$ ordinarily gentle and $v$ specially gentle, 
		\end{enumerate}
		then its complexification $\xc\Gamma/J$ cannot be a gentle algebra.
	\end{prop}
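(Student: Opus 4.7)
The proof proceeds by direct case analysis, using the computation of $\Gamma$ and $J$ from Sections~2 and~3 in each of the ten listed situations. The vertex-idempotent algebra of $\xc\Gamma$ is a product of copies of $\xc$, so $\xc\Gamma$ (and hence $\xc\Gamma/J$) is basic with Gabriel quiver $\Gamma$; any gentle presentation of $\xc\Gamma/J$ must therefore arise from $\Gamma$ with a suitable change of arrow basis and corresponding admissible ideal.

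For the four cases in (1) the ideal $I$ is zero, hence $J=0$. Applying the rules in Section~2.2 one checks that in each sub-case the middle vertex $v$ has $|v^-|=1$ and $|v^+|=2$ in $\Gamma$: the incoming $\xr$-$\xr$ or $\xh$-$\xh$ arrow stays single, while the outgoing arrow (rule (4), (5), (6), or (8) in the construction of $\Gamma_1$) splits into two. Condition (G2) then forces one of the two length-two compositions through $v$ to lie in $J$; but $J=0$ and any change of basis of arrows preserves $J$. Hence no gentle presentation exists.

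For the six cases in (2) all modulations are $\xr$ or $\xh$, so $\Gamma=Q$ and $J=I$. The ideal is generated by a length-two path $\beta\gamma$ at $u$ (from the ordinary gentleness of $u$, WLOG this in place of $\beta^\im\gamma$) together with the $2$-dimensional bimodule $I_v^{\varepsilon}$ at $v$ for some $\varepsilon\in\{0,1\}$ (from the special gentleness of $v$). Using Table~\ref{psi(ab)}, the image $\Psi(\mathbf{e}(I_v^{\varepsilon}\otimes_{\xr}\xc)\mathbf{e})$ is the $2$-dimensional $\xc$-subspace spanned by the two pure products $\alpha_+\beta_+$ and $\alpha_-\beta_-$, where $\alpha_\pm=\alpha\pm\im\alpha^\im$ and $\beta_\pm=\beta\pm\im\beta^\im$ (with the obvious quaternion analogues in the $\xh$-rowed sub-cases). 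In the basis $\{\alpha_\pm,\beta_\pm\}$ condition (G2) holds at $v$, but the $u$-relation becomes $\tfrac{1}{2}(\beta_+ + \beta_-)\gamma$; since $\beta\gamma\in J$ while $\beta^\im\gamma\notin J$, neither $\beta_+\gamma$ nor $\beta_-\gamma$ lies in $J$, and (G2) fails at $u$ in this basis.

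The main obstacle is to exclude every other choice of basis. The argument is that (G2) at $u$ forces the $u$-to-$v$ basis to contain a nonzero scalar multiple of $\beta$, because $\ker(-\cdot\gamma)$ inside the $\xc$-span of $\beta$ and $\beta^\im$ equals $\xc\beta$. Conversely, a rank-one analysis of the subspace $\Psi(\mathbf{e}(I_v^{\varepsilon}\otimes\xc)\mathbf{e})$ viewed inside $V\otimes W$ (with $V,W$ the $\xc$-spans of the two $\alpha$-arrows and the two $\beta$-arrows at $v$) shows its rank-one locus is the vanishing set of a non-degenerate binary quadratic form over $\xc$, hence consists of exactly two lines; these force any $\beta$-basis that makes $I_v^{\varepsilon}$ split into pure products to coincide with $\{\beta_+,\beta_-\}$ up to scalars and permutation. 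These two rigidities are incompatible, so no gentle presentation of $\xc\Gamma/J$ exists. The six sub-cases of (2) are handled uniformly by this argument, using the explicit formulas in Table~\ref{spgenv} and the corresponding rows of Table~\ref{psi(ab)}.
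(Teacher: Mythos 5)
Your overall strategy is legitimate and, for part (2), genuinely different from the paper's. The paper derives a contradiction from an isomorphism invariant: using Table~\ref{psi(ab)} it shows that \emph{all} paths from the source $x$ to the sink $w$ lie in $J$, so $e_w(\xc\Gamma/J)e_x=0$, whereas any gentle presentation of this quiver must leave one such path alive. Your argument instead pins down, via the rank-one locus of $\Psi(\mathbf{e}(I_v^{\varepsilon}\otimes\xc)\mathbf{e})$ inside $V\otimes W$, the only arrow bases at $v$ compatible with (G2), and shows they clash with the basis forced at $u$; the determinant/quadratic-form computation is correct and this rigidity analysis is a valid alternative obstruction. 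However, there are two concrete errors in your setup.

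First, in case (1) you assert that $I=0$. The proposition constrains only $(Q,\mathcal{M})$, not $I$; the paper's proof instead observes that $\mathcal{M}(\alpha\beta)$ is generated by any nonzero element, so either $\mathcal{M}(\alpha\beta)\subseteq I$ or $\mathcal{M}(\alpha\beta)\cap I=0$, and in both cases the two fiber paths through $v$ are simultaneously in or out of $J$, so the ``exactly one'' required by (G2) fails in every basis. Your argument only covers the second alternative; the first is handled the same way, but you must say so. Second, in case (2) the claim ``$\Gamma=Q$ and $J=I$'' is false in four of the six sub-cases: whenever the terminal modulation differs from that of $v$ the outgoing arrow splits into two fibers $\alpha,\overline{\alpha}$ (and for $\xc_w$ the vertex $w$ splits as well), and even in the all-$\xh$ sub-case $J$ is the image of $\mathbf{e}(I\otimes\xc)\mathbf{e}$ under $\Psi$, whose generators (Table~\ref{psi(ab)}, row $\xh\xh\xh$) are not those of $I$. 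Your rank-one argument does adapt to each complexified quiver — in every sub-case the relation space at $v$ is spanned by two pure tensors whose $\beta$-components are $\beta\pm\im\beta^\im$, which is all the argument really uses — but as written it is applied to the wrong quiver and ideal in most sub-cases, and the case distinctions (two outgoing arrows to one vertex versus to two vertices $w,\overline{w}$) need to be made explicit rather than declared uniform.
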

	\begin{proof}
		(1). Assume that $Q=u\overset{\beta
		}{\rightarrow}v\overset{\alpha}{\rightarrow}w$. For each case, in the complexifed quiver $\Gamma$ there are exactly two paths passing $v$. One is $\alpha\beta$ and the other is $\tau(\alpha\beta)$, which is $\alpha\overline{\beta}$ or $\overline{\alpha}\beta$. One can check that in each case, $\mathcal{M}(\alpha\beta)$ is a $\mathcal{M}(w)$-$\mathcal{M}(u)$-bimodule generated by any non-zero element. Hence, either $\mathcal{M}(\alpha\beta)\subseteq I$ or $\mathcal{M}(\alpha\beta)\cap I=0$. By \cite[Remark 3.6]{LJ2023}, $\alpha\beta$ and $\tau(\alpha\beta)$ both belong to $J$, xor both do not belong to $J$. That is, $\dim_{\xc}\mathrm{rad}^2(\xc\Gamma/J)=0$ or $2$. However, if $v$ satisfies the condition (G2) for some $\xc\Gamma/J'$ with $\xc\Gamma/J'\simeq\xc\Gamma/J$, then $\dim_{\xc}\mathrm{rad}^2(\xc\Gamma/J)=\dim_{\xc}\mathrm{rad}^2(\xc\Gamma/J')=1$, which is a contradiction.
		
		(2). Consider the case $\xh_x\overset{\xh_\gamma}{\rightarrow} \xh_u\underset{\xh_{\beta^\im}}{\overset{\xh_\beta}{\rightrightarrows}} \xh_v\overset{\xh_\alpha}{\rightarrow} \xr_w.$ Then $$\Gamma=x\overset{\gamma}{\rightarrow} u\underset{\beta^\im}{\overset{\beta}{\rightrightarrows}} v\underset{\overline{\alpha}}{\overset{\alpha}{\rightrightarrows}} w.$$
		If $\xc\Gamma/J$ is (isomorphic to) a gentle algebra, then there is a non-zero non-isomorphic morphism from the projective module corresponding with $w$ to the projective module corresponding with $x$.
		
		On the other hand, since $u$ is ordinarily gentle and $v$ is specially gentle, we assume that $I$ includes $1_{\alpha}\otimes1_{\beta}+1_{\alpha}\otimes\jm_{\beta^\im}$ and $1_{\beta}\otimes1_{\gamma}$. By calculation, $J$ includes $\alpha(\beta+\im\beta^\im), \overline{\alpha}(\beta-\im\beta^\im)$ and $\beta\gamma$; see Table \ref{psi(ab)}. Then $J$ includes all the (non-trivial) paths from $x$ to $w$. Then there is no non-zero non-isomorphic homomorphism from the projective module corresponding with $w$, to the projective module corresponding with $x$, which is a contradiction. Other assumptions on $I$ cause a similar contradiction.
		
		The statement for other modulated quivers listed above can be proved in a similar way.
	\end{proof}
	
	Notice that the statement holds for modulated quivers opposite to those in the above proposition, since the opposite algebra of a gentle algebra is gentle.
	
	Proposition \ref{notgentle} (1) partially shows that, if we only consider modulated quivers, the definition of gentle vertex in $T(Q,\mathcal{M})/I$ includes all the possible local situations that can occur in a complexified-gentle algebra. The proposition also shows that we cannot ``glue'' an ordinarily gentle vertex and a specially gentle vertex (not a source or sink) next to each other to get a complexified-gentle algebra. Therefore, we propose the following conjecture.
	
	\begin{conj}
		A connected $\xr$-algebra is locally complexified-gentle if and only if it is locally complexified-gentle either of uniform type or of special type.
	\end{conj}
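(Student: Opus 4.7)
The forward direction is the content of Theorem~\ref{main}, so the task is to prove the converse: a connected locally complexified-gentle $\xr$-algebra $A$ is of uniform or of special type. The plan is to fix a modulated quiver presentation $T(Q,\mathcal{M})/I$ Morita equivalent to $A$ and then argue first locally (each vertex must be either ordinarily or specially gentle) and then globally (the ``type'' cannot jump across edges, so connectedness forces a single type). Up to Morita equivalence we may assume that $\xc\Gamma/J$ itself is locally gentle rather than merely Morita equivalent to such; after a further change of basis on the fiber arrows (using the inverses of the maps $d_v$ from Proposition~\ref{gentle-ver}) we may take the generators of $J$ to be $\tau$-invariant paths of length two, so that every relation descends through $\Psi^{-1}$ to a concrete bimodule element of $T(Q,\mathcal{M})$.

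The core of the argument is a vertex-by-vertex analysis. Fix $v\in Q_0$. The gentle bounds $|v^\pm|\leq 2$ at the fibers of $v$ in $\Gamma$, combined with the construction of $\Gamma$ in Section~2.2, leave only a short list of possibilities for $\mathcal{M}(v)$ and for the bimodule types on arrows incident to $v$. Comparing this list with Section~3.2, each surviving configuration makes $v$ either ordinarily gentle (with $\xr$, $\xh$, or $\xc$) or one of the eighteen specially gentle patterns of Table~\ref{spgeneratorv}. One then inverts the computations of $\Psi$ collected in Table~\ref{psi(ab)}: the gentle (G2) relations at the fibers of $v$ pull back to $I_v^0$ xor $I_v^1$ precisely, so $I\cap\bigoplus_{\alpha\in v^+,\beta\in v^-}\mathcal{M}(\alpha\beta)=I_v$ in the sense of Section~3.2, and since the gentle ideal $J$ is generated by length-two paths, $I$ is generated by $\{I_v:v\in Q_0\}$.

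It remains to show that the vertex types cannot mix. Proposition~\ref{notgentle} already handles several adjacencies: an ordinarily gentle vertex of one uniform type placed next to a specially gentle vertex (or to an ordinarily gentle vertex with $\xc$) produces length-two relations in $J$ that violate (G2) at one of the adjacent fibers. The same style of argument, applied exhaustively to each possible adjacency between the three types (uniform with $\xr$, uniform with $\xh$, and special), will rule out every mixed edge, and connectedness of $Q$ then forces a single type, placing $T(Q,\mathcal{M})/I$ in exactly one of the classes of Theorem~\ref{main}. The main obstacle lies in this enumeration: one must keep track of how $\bigoplus_{\alpha\in v^+,\beta\in v^-}\mathcal{M}(\alpha\beta)$ can decompose, how its summands behave under $\Psi$, and how the resulting length-two relations interact at each fiber of each neighbour, in order to confirm both the completeness of the eighteen cases in Table~\ref{spgeneratorv} and the absence of exotic mixed adjacencies. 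Once that case analysis is settled, the descent through $\Psi^{-1}$ and the connectedness step are essentially formal.
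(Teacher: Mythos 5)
The statement you are addressing is stated in the paper only as a conjecture: the authors prove the ``if'' direction (Theorem~\ref{main}) and offer partial negative evidence (Proposition~\ref{notgentle}), but no proof of the converse appears in the paper, so your attempt must stand entirely on its own. As written it is a strategy outline rather than a proof, and the steps it defers are precisely the ones that make this a conjecture. First, the reduction in your opening paragraph --- that after composing with inverses of the $d_v$ one may assume the generators of $J$ are $\tau$-invariant paths of length two --- is asserted, not proved. A priori you only know that $\xc\Gamma/J$ is Morita equivalent (hence, in the basic admissible case, isomorphic) to some $\xc\Gamma'/J'$ with $J'$ generated by length-two paths, via an arbitrary $\xc$-algebra isomorphism; such an isomorphism identifies arrow spaces only up to base change mixing parallel arrows and adding longer paths, and it has no reason to commute with the conjugation $\tau$ coming from the real form. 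Showing that a gentle presentation of the complexification can always be normalized compatibly with the descent data is the heart of the problem; the maps $d_v$ only witness that this is \emph{possible} for the ideals produced by Theorem~\ref{main}, not that it is \emph{necessary} for an arbitrary $\tau$-stable $J$ with $\xc\Gamma/J$ locally gentle. (For non-admissible ideals, as in Example~\ref{skgen exm}, even the quiver itself is not an invariant, and locally gentle algebras may be infinite-dimensional.)

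Second, both the completeness of the eighteen local configurations of Table~\ref{spgenv} and the exclusion of every mixed adjacency are explicitly postponed (``once that case analysis is settled\dots''). Proposition~\ref{notgentle} covers only a short list of adjacencies, and its part (2) is proved by an ad hoc argument about morphisms between two particular projective modules, which does not obviously propagate uniformly to all pairings of uniform-with-$\xr$, uniform-with-$\xh$, ordinarily-gentle-with-$\xc$ and specially gentle vertices; you would also have to exclude vertices whose local data lies outside both the ordinary and the eighteen special patterns yet still complexifies to a gentle configuration after a change of basis that mixes fibers lying over different arrows of $Q$. Finally, you begin by ``fixing a modulated quiver presentation $T(Q,\mathcal{M})/I$ Morita equivalent to $A$''; for finite-dimensional $A$ this follows from Wedderburn--Malcev, but for infinite-dimensional locally complexified-gentle algebras the existence of such a presentation is itself an unproved assumption. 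Until these points are filled in, the proposal does not constitute a proof of the conjecture.
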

	
	If we consider which $\xr$-algebras become string algebras (see \cite{BR1987} for the definition of string algebras) after complexification, then locally each vertex may be associated with a larger ideal. In this situation, the modulated quivers in the above proposition may not be excluded. Thus, defining ``complexified-string algebras'' by modulated quivers seems more complicated.
	
	\begin{exm}\label{r-string alg}
		Let $T(Q,\mathcal{M})/I$ be an $\xr$-algebra given by the modulated quiver $$\xh_x\overset{\xh_\gamma}{\rightarrow} \xh_u\underset{\xh_{\beta^\im}}{\overset{\xh_\beta}{\rightrightarrows}} \xh_v\overset{\xh_\alpha}{\rightarrow} \xr_w,$$ which is in case {\rm(2)} of the above proposition. And consider the ideal $$I=\langle 1_{\beta}\otimes1_{\gamma}, 1_{\alpha}\otimes1_{\beta},1_{\alpha}\otimes \jm_{\beta^\im}\rangle.$$ 
		Then its complexified quiver presentation $\xc\Gamma/J$ is given by $$\Gamma=x\overset{\gamma}{\rightarrow} u\underset{\beta^\im}{\overset{\beta}{\rightrightarrows}} v\underset{\overline{\alpha}}{\overset{\alpha}{\rightrightarrows}} w,\qquad J=\langle\beta\gamma,\alpha\beta,\alpha\beta^{\im},\overline{\alpha}\beta,\overline{\alpha}\beta^{\im}\rangle,$$ which is a string algebra.
	\end{exm}

	\section{The \texorpdfstring{$\xc$}{}-semilinear clannish algebras of gentle type}
	
	In this section, we study a special type of semilinear clannish algebras over $\xc$ in the sense of \cite{BC2024}. We show that these algebras and locally complexified-gentle algebras of special type coincide up to Morita equivalences.
	
	\medskip
	\subsection{The algebra \texorpdfstring{$\xc_{\sigma}Q/\langle S\rangle$}{}}
	Let $Q=(Q_0, Q_1, s, t)$ be a finite quiver and $$\sigma: Q_1\rightarrow \mathrm{Gal}(\xc/\xr)=\{\mathrm{Id},\overline{(?)}\}\mbox{, }\alpha\mapsto\sigma_{\alpha}$$ be twists on arrows, where $\overline{(?)}$ is taking conjugation. 
	
	Let $\xc_{\sigma}Q$ be the semilinear path algebra given by $Q$ and $\sigma$, that is, $\xc_{\sigma}Q$ is a $\xc$-ring generated by trivial paths $e_i$, $i\in Q_0$ and arrows $\alpha\in Q_1$ subject to the relations: for $ i,j\in Q_0,\; \alpha\in Q_1,\; c\in\xc$, $$e_ie_j=\begin{cases}
		e_i, & i=j\\0& i\neq j
	\end{cases}, \;\sum_{i\in Q_0}e_i=1, \; e_{t(\alpha)}\alpha=\alpha, \; \alpha e_{s(\alpha)}=\alpha, \; e_i c=c e_i, \; \alpha c=\sigma_{\alpha}(c)\alpha;$$ see \cite[Section 2.1]{BC2024}. Notice that $\xc_{\sigma}Q$ is an $\xr$-algebra.
	
	Let $\mathbb{S}$ be a subset of loops in $Q_1$ whose elements are called \textbf{special loops}. Arrows not in $\mathbb{S}$ are called \textbf{ordinary arrows}. We assume that:
	\begin{enumerate}
		\item $\sigma_{s}=\overline{(?)}, \forall s\in\mathbb{S}$;
		\item each $s\in\mathbb{S}$ is associated with a skew polynomial $x^2-1$ or $x^2+1$ in the skew polynomial ring $\xc[x;\overline{(?)}]$;
		\item each vertex has at most one special loop on it, and we denote this loop by $s_i$ if $i\in Q_0$ has one.
	\end{enumerate}
	Hence, there are three types of vertices in $Q$, that is, $$Q_0=Q_{\xr}\sqcup Q_{\xh}\sqcup Q_{\xc},$$ where $Q_{\xr}$ contains endpoints of special loops associated with $x^2-1$, $Q_{\xh}$ contains endpoints of special loops associated with $x^2+1$ and $Q_{\xc}$ is the set of vertices with no special loops on them.
	
	We consider the quotient algebra $$\xc_{\sigma}Q/\langle S\rangle \mbox{, where }  S=\{s_i^2-e_i\,|\,i\in Q_{\xr}\}\cup\{s_j^2+e_j\,|\, j\in Q_{\xh}\}.$$ It is an $\xr$-algebra. Denoted by $\lceil p\rceil$ the element $p+\langle S\rangle$ in $\xc_{\sigma}Q/\langle S\rangle$, for $p\in \xc_{\sigma}Q$.
	
	For each vertex $i\in Q$, let $A_i$ be the $\xc$-subring of $\xc_{\sigma}Q/\langle S\rangle$ generated by the trivial path $\lceil e_i\rceil $ and the special loop $\lceil s_i\rceil $ (if exists). That is,
	$$A_i=\begin{cases}
		\xc \lceil e_i\rceil \oplus \xc \lceil s_i\rceil,  (\simeq \xc[x;\overline{(?)}]/\langle x^2-1\rangle\simeq M_2(\xr)) &\mbox{ if } i\in Q_{\xr}\\
		\xc \lceil e_i\rceil \oplus \xc \lceil s_i\rceil, (\simeq \xc[x;\overline{(?)}]/\langle x^2+1\rangle\simeq \xh) &\mbox{ if } i\in Q_{\xh}\\
		\xc \lceil e_i\rceil, (\simeq \xc) &\mbox{ if } i\in Q_{\xc}
	\end{cases}.$$ In each case, $A_i$ is a simple $\xr$-algebra.
	
	For each ordinary arrow $\alpha\in Q_1\setminus\mathbb{S}$, let $M_{\alpha}$ be the $A_{t(\alpha)}$-$A_{s(\alpha)}$-bimodule generated by $\lceil \alpha\rceil $ in $\xc_{\sigma}Q/\langle S\rangle$. That is,
	$$M_{\alpha}=\begin{cases}
		\xc \lceil \alpha\rceil \oplus \xc \lceil s_{t(\alpha)}\alpha\rceil  \oplus \xc\lceil \alpha s_{s(\alpha)}\rceil \oplus \xc\lceil s_{t(\alpha)}\alpha s_{s(\alpha)}\rceil   &\mbox{if } t(\alpha)\notin Q_{\xc}, s(\alpha)\notin Q_{\xc}\\
		\xc \lceil \alpha\rceil \oplus \xc \lceil s_{t(\alpha)}\alpha\rceil    &\mbox{if } t(\alpha)\notin Q_{\xc}, s(\alpha)\in Q_{\xc}\\
		\xc \lceil \alpha\rceil \oplus \xc\lceil \alpha s_{s(\alpha)}\rceil  &\mbox{if } t(\alpha)\in Q_{\xc}, s(\alpha)\notin Q_{\xc}\\
		\xc \lceil \alpha\rceil   &\mbox{if } t(\alpha)\in Q_{\xc}, s(\alpha)\in Q_{\xc}
	\end{cases}$$
	
	Set $\textbf{A}=\underset{i\in Q_0}{\prod} A_i$ and $\textbf{M}=\underset{\alpha\in Q_1\setminus\mathbb{S}}{\bigoplus}M_{\alpha}$. The algebra $\xc_{\sigma}Q/\langle S\rangle$ is freely generated by $\textbf{A}$ and $\textbf{M}$; see \cite[Section 1]{BSZ2009}. Hence, we have the following result.
	
	\begin{prop}\label{CQS is t alg}
		As $\xr$-algebras, the algebra $\xc_{\sigma}Q/\langle S\rangle$ is isomorphic to the tensor algebra $T(\textbf{A}, \textbf{M})$.
	\end{prop}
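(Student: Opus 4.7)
My plan is to build a natural $\xr$-algebra homomorphism $\Phi\colon T(\mathbf{A},\mathbf{M})\to \xc_\sigma Q/\langle S\rangle$ using the universal property of the tensor algebra, and then to show it is bijective. First I would verify that $\mathbf{A}=\prod_{i\in Q_0}A_i$ embeds as an $\xr$-subalgebra of $\xc_\sigma Q/\langle S\rangle$: the $\lceil e_i\rceil$ are pairwise orthogonal idempotents summing to $1$, and each $A_i$ has the claimed two- or one-dimensional structure because $\lceil s_i\rceil^2=\pm\lceil e_i\rceil$ by construction of $S$. Similarly, each $M_\alpha$ is naturally an $A_{t(\alpha)}$-$A_{s(\alpha)}$-subbimodule of $\xc_\sigma Q/\langle S\rangle$, and distinct $M_\alpha$ sit in distinct idempotent corners $\lceil e_{t(\alpha)}\rceil\bigl(\xc_\sigma Q/\langle S\rangle\bigr)\lceil e_{s(\alpha)}\rceil$, so $\mathbf{M}=\bigoplus_\alpha M_\alpha$ embeds as an $\mathbf{A}$-subbimodule. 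By the universal property of $T(\mathbf{A},\mathbf{M})$, these inclusions extend to a unique $\xr$-algebra homomorphism $\Phi$ sending a pure tensor $m_n\otimes\cdots\otimes m_1$ to its product $m_n\cdots m_1$ in the quotient.

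Surjectivity will be immediate, since $\Phi$ hits all the generators $\lceil e_i\rceil$, $\lceil s_i\rceil$, and $\lceil\alpha\rceil$ of $\xc_\sigma Q/\langle S\rangle$ (the first two lying in $\mathbf{A}$, the third in $\mathbf{M}$). For injectivity I would invoke the cited result from \cite{BSZ2009}: the ring $\xc_\sigma Q/\langle S\rangle$ is freely generated over $\mathbf{A}$ by the $\mathbf{A}$-bimodule $\mathbf{M}$. Because the tensor algebra is precisely the universal such extension, this freeness yields an inverse to $\Phi$ (formally: the identity inclusions $\mathbf{A}\hookrightarrow T(\mathbf{A},\mathbf{M})$ and $\mathbf{M}\hookrightarrow T(\mathbf{A},\mathbf{M})$ extend to a ring map $\xc_\sigma Q/\langle S\rangle\to T(\mathbf{A},\mathbf{M})$ which is inverse to $\Phi$ on generators and hence everywhere).

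The main obstacle is to confirm that the semilinearity does not introduce hidden relations between products of elements of $\mathbf{M}$ beyond those already imposed by the $\mathbf{A}$-bimodule structure. Concretely, one must verify that whenever $s(\alpha)=t(\beta)$, the product $M_\alpha\cdot M_\beta$ inside $\xc_\sigma Q/\langle S\rangle$ is naturally isomorphic, as an $A_{t(\alpha)}$-$A_{s(\beta)}$-bimodule, to the balanced tensor $M_\alpha\otimes_{A_{s(\alpha)}}M_\beta$; equivalently, that the identifications forced by balancing over $A_{s(\alpha)}$ (namely $\lceil s\rceil c=\overline{c}\lceil s\rceil$ and $\lceil s\rceil^2=\pm\lceil e\rceil$) coincide exactly with the identifications already present in $\xc_\sigma Q/\langle S\rangle$. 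Once this compatibility is granted, the freeness theorem of \cite{BSZ2009} closes the argument and the proposition follows.
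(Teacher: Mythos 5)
Your proposal is correct and follows essentially the same route as the paper: the paper's entire argument is the observation that $\xc_{\sigma}Q/\langle S\rangle$ is freely generated by $\textbf{A}$ and $\textbf{M}$ (citing \cite[Section 1]{BSZ2009}), from which the identification with $T(\textbf{A},\textbf{M})$ is immediate, with the isomorphism being the identity on $\textbf{A}$ and $\textbf{M}$. Your version simply spells out the universal-property bookkeeping that the paper leaves implicit.
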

	
	We identify $\xc_{\sigma}Q/\langle S\rangle$ with $T(\textbf{A}, \textbf{M})$ since the isomorphism can be given as the identity restricted on $\textbf{A}$ and $\textbf{M}$.
	
	\begin{rem}
		As in \cite[Section 2]{BC2024}, when we consider the ring $\xc[x;\sigma]/\langle p(x)\rangle$, where $p(x)$ is a quadratic polynomial, it is reasonable to require $p(x)$ to be normal. In this case, according to \cite[Lemma 2.3]{BC2024}, $\xc[x;\sigma]/\langle p(x)\rangle$ is isomorphic to one of the following four cases:
		
		\begin{tabular}{ll}
			(1) $\xc[x;\overline{(?)}]/\langle x^2+1\rangle\simeq\xh$; & (2) $\xc[x;\overline{(?)}]/\langle x^2-1\rangle\simeq M_2(\xr)$;\\
			(3) $\xc[x;\mathrm{Id}]/\langle x^2-x\rangle\simeq \xc\times\xc$; &(4) $\xc[x;\mathrm{Id}]/\langle x^2\rangle$.
		\end{tabular}\\
		Loops associated with case (3) are of ``skew-gentle type''. The loops associated with case (4) can be viewed as ordinary loops. In this paper, the special loops in $\mathbb{S}$ are associated with only case (1) or (2) since we only consider ``gentle type''.
	\end{rem}
	
	\medskip
	
	\subsection{The basic algebra of \texorpdfstring{$\xc_{\sigma}Q/\langle S\rangle$}{}} Let $\xc_{\sigma}Q/\langle S\rangle$ be an $\xr$-algebra as considered in Section 4.1.
	We give a modulated quiver presentation $T(Q^{\mathrm{b}},\mathcal{M})$ of $\xc_{\sigma}Q/\langle S\rangle$.
	
	\textbf{The construction of the quiver $Q^{\mathrm{b}}=(Q^{\mathrm{b}}_0, Q^{\mathrm{b}}_1, s^{\mathrm{b}}, t^{\mathrm{b}})$:}
	\begin{enumerate}
		\item $Q^{\mathrm{b}}_0=Q_0$.
		\item $Q^{\mathrm{b}}_1=\{\alpha\;|\; \alpha\in Q_1\setminus\mathbb{S}\mbox{ with } s(\alpha)\in Q_D \mbox{ such that } t(\alpha)\notin Q_D \mbox{ or } D=\xc \}\\ \mbox{ }\qquad\sqcup\, \{\alpha, \alpha^{\im}\;|\; \alpha\in Q_1\setminus\mathbb{S}\mbox{ with } s(\alpha), t(\alpha)\in Q_{\xr} \mbox{ or }s(\alpha), t(\alpha)\in Q_{\xh} \}$.\\ That is, each ordinary arrow $\alpha$ in $Q$ gives two parallel arrows $\alpha$ and $\alpha^\im$ in $Q^{\mathrm{b}}$ if its endpoints both belong to $Q_{\xr}$ or both belong to $Q_{\xh}$; otherwise, it gives one arrow $\alpha$ in $Q^{\mathrm{b}}$.
		\item $\forall \alpha \in Q_1\setminus\mathbb{S}$, $s^{\mathrm{b}}(\alpha)=s(\alpha)$, $t^{\mathrm{b}}(\alpha)=t(\alpha)$; and if $\alpha^{\im}$ exists, $s^{\mathrm{b}}(\alpha^{\im})=s(\alpha)$, $t^{\mathrm{b}}(\alpha^{\im})=t(\alpha)$.
	\end{enumerate}
	
	\textbf{The modulation $\mathcal{M}$ on $Q^{\mathrm{b}}$:}
	\begin{enumerate}
		\item For $i\in Q_{D}\subseteq Q_0=Q^{\mathrm{b}}_0$, where $D\in\{\xr,\xh, \xc\}$, set $\mathcal{M}(i)=D$.
		\item For $\alpha\in Q^{\mathrm{b}}_1$, set $$\mathcal{M}(\alpha)=\begin{cases}
			\xc, \qquad\qquad\qquad\qquad\mbox{ if } t(\alpha), s(\alpha)\in Q_{\xc} \mbox{ and } \sigma_{\alpha}=\mathrm{Id};\\
			\overline{\xc}, \qquad\qquad\qquad\qquad\mbox{ if } t(\alpha), s(\alpha)\in Q_{\xc} \mbox{ and } \sigma_{\alpha}=\overline{(?)};\\
			\mbox{the simple $\mathcal{M}(t(\alpha))$-$\mathcal{M}(s(\alpha))$-bimodule,}\qquad\quad\; \mbox{ else.}
		\end{cases}$$  
	\end{enumerate}
	
	For explicitness, sometimes by $D_x$ we denote the ring or bimodule $D$ that is related to $x\in Q^{\mathrm{b}}_0\sqcup Q^{\mathrm{b}}_1$ (or is the modulation on $x$) and by $k_x$ we denote the element $k$ in $D_x$. 
	
	\begin{thm}\label{CQS and TQM}
		The $\xr$-algebra $\xc_{\sigma}Q/\langle S\rangle$ is Morita equivalent to $T(Q^{\mathrm{b}},\mathcal{M})$. That is, there is an isomorphism $\Phi:\epsilon\xc_{\sigma}Q/\langle S\rangle\epsilon\simeq T(Q^{\mathrm{b}},\mathcal{M})$ for some full idempotent $\epsilon$ of $\xc_{\sigma}Q/\langle S\rangle$.
	\end{thm}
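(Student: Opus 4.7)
The plan is to leverage Proposition~\ref{CQS is t alg} to identify $\xc_\sigma Q/\langle S\rangle$ with the tensor algebra $T(\mathbf{A},\mathbf{M})$ over $\xr$, and then to produce a full idempotent $\epsilon\in\mathbf{A}$ whose corner reproduces the modulated tensor algebra $T(Q^{\mathrm{b}},\mathcal{M})$. Since each $A_i$ is a simple $\xr$-algebra with associated division ring $\mathcal{M}(i)\in\{\xr,\xh,\xc\}$, there is a primitive idempotent $\epsilon_i\in A_i$ with $\epsilon_iA_i\epsilon_i\simeq\mathcal{M}(i)$ as $\xr$-algebras; I will take concretely
$$\epsilon_i=\tfrac{1}{2}(\lceil e_i\rceil+\lceil s_i\rceil)\ (i\in Q_\xr),\qquad \epsilon_i=\lceil e_i\rceil\ (i\in Q_\xh\cup Q_\xc),$$
and set $\epsilon=\sum_{i\in Q_0}\epsilon_i$. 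A direct calculation using $s_i^2=e_i$ and $s_ic=\bar{c}s_i$ shows that each $\epsilon_i$ is an idempotent and that in the $Q_\xr$-case $\epsilon_iA_i\epsilon_i=\xr\epsilon_i$; in the other cases $\epsilon_iA_i\epsilon_i=A_i$ is $\xh$ or $\xc$ itself. Because each $A_i$ is simple, each $\epsilon_i$ is full in $A_i$, so $\epsilon$ is full in $\mathbf{A}$ and hence in $T(\mathbf{A},\mathbf{M})$.

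Next I apply the general fact that idempotent corners commute with tensor algebras: for a full idempotent $\epsilon\in\mathbf{A}$, the bimodule equivalence $N\mapsto\epsilon N\epsilon$ is monoidal with respect to $\otimes_\mathbf{A}$, since $N\simeq\mathbf{A}\epsilon\otimes_{\epsilon\mathbf{A}\epsilon}\epsilon N\epsilon\otimes_{\epsilon\mathbf{A}\epsilon}\epsilon\mathbf{A}$ for any $\mathbf{A}$-bimodule $N$. Taking corners degree by degree therefore gives an isomorphism
$$\Phi\colon\epsilon T(\mathbf{A},\mathbf{M})\epsilon\;\xrightarrow{\sim}\;T\bigl(\epsilon\mathbf{A}\epsilon,\,\epsilon\mathbf{M}\epsilon\bigr).$$
By construction $\epsilon\mathbf{A}\epsilon=\prod_{i\in Q_0}\mathcal{M}(i)$, which matches the modulation on the vertices $Q^{\mathrm{b}}_0=Q_0$. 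It therefore remains to identify $\epsilon\mathbf{M}\epsilon=\bigoplus_{\alpha\in Q_1\setminus\mathbb{S}}\epsilon_{t(\alpha)}M_\alpha\epsilon_{s(\alpha)}$ with $\bigoplus_{\beta\in Q^{\mathrm{b}}_1}\mathcal{M}(\beta)$ as an $\epsilon\mathbf{A}\epsilon$-bimodule.

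This identification is carried out one arrow at a time. For each $\alpha\in Q_1\setminus\mathbb{S}$, I compute $\epsilon_{t(\alpha)}M_\alpha\epsilon_{s(\alpha)}$ on the generators $\lceil\alpha\rceil,\lceil s_{t(\alpha)}\alpha\rceil,\lceil\alpha s_{s(\alpha)}\rceil,\lceil s_{t(\alpha)}\alpha s_{s(\alpha)}\rceil$ of $M_\alpha$ available in each case, using $\alpha c=\sigma_\alpha(c)\alpha$ to push twists across. When $t(\alpha),s(\alpha)\in Q_\xr$ (resp.\ both in $Q_\xh$), the resulting bimodule is free of rank two over $\xr$ (resp.\ $\xh$), contributing the two parallel arrows $\alpha,\alpha^\im$ with $\mathcal{M}(\alpha)=\mathcal{M}(\alpha^\im)=\xr$ (resp.\ $\xh$); when exactly one endpoint lies in $Q_\xc$, or when both lie in $Q_\xc$ (with $\mathcal{M}(\alpha)$ being $\xc$ or $\overline{\xc}$ according to the twist $\sigma_\alpha$), the result is precisely the unique simple $\mathcal{M}(t(\alpha))$-$\mathcal{M}(s(\alpha))$-bimodule prescribed by the construction of $Q^{\mathrm{b}}$. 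The main obstacle is this bookkeeping: one must track the interaction between the twist $\sigma_\alpha$ and the multiplications by the special loops on the $Q_\xr$-side, where $\epsilon_i$ symmetrises $e_i$ and $s_i$ and so mixes the generators of $M_\alpha$. Once the isomorphism is verified on each generating piece, the pieces assemble into the required $\Phi$.
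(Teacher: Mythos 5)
Your proposal takes essentially the same route as the paper: the same idempotent $\epsilon$, the same reduction via Proposition~\ref{CQS is t alg} to the tensor algebra $T(\mathbf{A},\mathbf{M})$, and the same arrow-by-arrow identification of the corner bimodules $\epsilon_{t(\alpha)}M_\alpha\epsilon_{s(\alpha)}$; the only real difference is that you justify $\epsilon T(\mathbf{A},\mathbf{M})\epsilon\simeq T(\epsilon\mathbf{A}\epsilon,\epsilon\mathbf{M}\epsilon)$ abstractly via monoidality of the corner functor at a full idempotent, where the paper writes down an explicit map (Lemma~\ref{map 2}) because the concrete formula for $\Phi$ is needed later for Table~\ref{phi(ab)}. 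One minor slip: your case enumeration for $\epsilon\mathbf{M}\epsilon$ omits arrows with one endpoint in $Q_\xr$ and the other in $Q_\xh$ (neither endpoint in $Q_\xc$), though your method handles them the same way, yielding a single arrow modulated by the simple $\xr$-$\xh$ or $\xh$-$\xr$ bimodule.
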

	
	\begin{proof}
		By Proposition \ref{CQS is t alg}, we need to give a full idempotent $\epsilon$ of $T(\textbf{A},\textbf{M})$ and an isomorphism from $\epsilon T(\textbf{A},\textbf{M})\epsilon$ to $T(Q^{\mathrm{b}},\mathcal{M})$. To this end, we first give some isomorphisms on $\textbf{A}$ and $\textbf{M}$.
		
		\textbf{(1)} The map $\phi_1$. 
		
		For $i\in Q_0=Q^\mathrm{b}_0$, we have the following isomorphisms of $\xr$-algebras.
		\begin{enumerate}
			\item[($A_\xr$)] If $i\in Q_{\xr}$, $A_i\simeq M_2(\xr)_i,$ given by 
			$$\lceil e_i\rceil \mapsto \begin{bmatrix} 1 & 0\\ 0& 1 \end{bmatrix}_i\mbox{, }\im\mapsto \begin{bmatrix} 0& -1 \\ 1& 0 \end{bmatrix}_i\mbox{ and }\lceil s_i\rceil \mapsto\begin{bmatrix} 1 & 0\\ 0& -1 \end{bmatrix}_i.$$ 
			The idempotent $\epsilon_i=\dfrac{\lceil e_i\rceil +\lceil s_i\rceil }{2}\in A_i$ is mapped to $\begin{bmatrix} 1 & 0\\ 0& 0 \end{bmatrix}_i$. So $$\epsilon_i A_i \epsilon_i\simeq \xr_i.$$
			\item[($A_\xh$)] If $i\in Q_{\xh}$, $$A_i\simeq \xh_i \mbox{, given by } \lceil e_i\rceil \mapsto 1_i \mbox{, }\im\mapsto \jm_i \mbox{ and }\lceil s_i\rceil \mapsto \km_i.$$
			\item[($A_\xc$)] If $i\in Q_{\xc}$, $$A_i\simeq \xc_i \mbox{, given by } \lceil e_i\rceil \mapsto 1_i.$$
		\end{enumerate}
		
		For $\alpha\in Q_1\setminus\mathbb{S}\subseteq Q^\mathrm{b}_1$, we have the following isomorphisms of $A_{t(\alpha)}$-$A_{s(\alpha)}$-bimodules via isomorphisms of algebras above.
		\begin{enumerate}
			\item[($_{\xr}M_{\xr}$)] If $t(\alpha)\in Q_{\xr}$ and $s(\alpha)\in Q_{\xr}$, via $(A_{\xr})$,
			$$M_{\alpha}\simeq M_2(\xr)_{\alpha}\oplus M_2(\xr)_{\alpha^\im} \mbox{, given by }\lceil \alpha\rceil \mapsto \begin{cases}
				\small{\begin{bmatrix} 1 & 0\\ 0& 1 \end{bmatrix}_{\alpha}\!+\begin{bmatrix} 0 & -1\\ 1& 0 \end{bmatrix}_{\alpha^\im}}  & \mbox{if } \sigma_{\alpha}=\mathrm{Id}\\
				\small{\begin{bmatrix} 1 & 0\\ 0& -1 \end{bmatrix}_{\alpha}\!+\begin{bmatrix} 0 & 1\\ 1& 0 \end{bmatrix}_{\alpha^\im}}  & \mbox{if } \sigma_{\alpha}=\overline{(?)}
			\end{cases}.$$ 
			After multiplying idempotents, we have $\epsilon_{t(\alpha)}M_{\alpha}\epsilon_{s(\alpha)}\simeq {\xr_{\alpha}}\oplus {\xr_{\alpha^\im}}.$
			
			\item[($_{\xr}M_{\xh}$)] If $t(\alpha)\in Q_{\xr}$ and $s(\alpha)\in Q_{\xh}$, via $(A_{\xr})$ and $(A_{\xh})$,
			$$M_{\alpha}\simeq \begin{bmatrix}
				{_{\xr}\xh_{\xh}} \\ _{\xr}\xh_{\xh}
			\end{bmatrix}_{\alpha}  \mbox{, given by }\lceil \alpha\rceil \mapsto \begin{cases}
				\small{\begin{bmatrix} 1 \\ -\jm \end{bmatrix}_{\alpha}}  & \mbox{if } \sigma_{\alpha}=\mathrm{Id}\\
				\small{\begin{bmatrix} 1 \\ \;\jm\;\; \end{bmatrix}_{\alpha}}  & \mbox{if } \sigma_{\alpha}=\overline{(?)}
			\end{cases}.$$ After multiplying idempotent, we have $\epsilon_{t(\alpha)}M_{\alpha}\simeq {\xh_{\alpha}}$.
			
			\item[($_{\xr}M_{\xc}$)] If $t(\alpha)\in Q_{\xr}$ and $s(\alpha)\in Q_{\xc}$, via $(A_{\xr})$ and $(A_{\xc})$,
			$$M_{\alpha}\simeq \begin{bmatrix}
				{_{\xr}\xc_{\xc}} \\ {_{\xr}\xc_{\xc}}
			\end{bmatrix}_{\alpha}  \mbox{, given by }\lceil \alpha\rceil \mapsto \begin{cases}
				\small{\begin{bmatrix} 1 \\ -\im \end{bmatrix}_{\alpha}}  & \mbox{if } \sigma_{\alpha}=\mathrm{Id}\\
				\small{\begin{bmatrix} \;1\;\; \\ \;\im\;\; \end{bmatrix}_{\alpha}}  & \mbox{if } \sigma_{\alpha}=\overline{(?)}
			\end{cases}.$$ After multiplying idempotent, we have $\epsilon_{t(\alpha)}M_{\alpha}\simeq {\xc_{\alpha}}$.
			
			\item[($_{\xh}M_{\xr}$)] If $t(\alpha)\in Q_{\xh}$ and $s(\alpha)\in Q_{\xr}$, via $(A_{\xh})$ and $(A_{\xr})$,
			$$M_{\alpha}\simeq \begin{bmatrix}
				{_{\xh}\xh_{\xr}} & {_{\xh}\xh_{\xr}}
			\end{bmatrix}_{\alpha}  \mbox{, given by }\lceil \alpha\rceil \mapsto \begin{cases}
				\small{\begin{bmatrix} 1 &\;\; \jm\end{bmatrix}_{\alpha}}  & \mbox{if } \sigma_{\alpha}=\mathrm{Id}\\
				\small{\begin{bmatrix} 1& -\jm \end{bmatrix}_{\alpha}}  & \mbox{if } \sigma_{\alpha}=\overline{(?)}
			\end{cases}.$$ After multiplying idempotent, we have $M_{\alpha}\epsilon_{s(\alpha)}\simeq {\xh_{\alpha}}$.
			
			\item[($_{\xh}M_{\xh}$)]If $t(\alpha)\in Q_{\xh}$ and $s(\alpha)\in Q_{\xh}$, via $(A_{\xh})$,
			$$M_{\alpha}\simeq {\xh_{\alpha}}\oplus{\xh_{\alpha^\im}} \mbox{, given by }\lceil \alpha\rceil \mapsto \begin{cases}
				1_{\alpha}+\jm_{\alpha^\im}  & \mbox{if } \sigma_{\alpha}=\mathrm{Id}\\
				\km_{\alpha}+\lm_{\alpha^\im} & \mbox{if } \sigma_{\alpha}=\overline{(?)}
			\end{cases}.$$ 
			
			\item[($_{\xh}M_{\xc}$)]If $t(\alpha)\in Q_{\xh}$ and $s(\alpha)\in Q_{\xc}$, via $(A_{\xh})$ and $(A_{\xc})$,
			$$M_{\alpha}\simeq \begin{bmatrix}
				\xc \\ \xc
			\end{bmatrix}_{\alpha}, \mbox{ given by }\lceil \alpha\rceil \mapsto \begin{cases}
				\small{\begin{bmatrix} 1 \\ 0 \end{bmatrix}_{\alpha}}  & \mbox{if } \sigma_{\alpha}=\mathrm{Id}\\
				\small{\begin{bmatrix} 0 \\ 1 \end{bmatrix}_{\alpha}}  & \mbox{if } \sigma_{\alpha}=\overline{(?)}
			\end{cases}.$$ 
			
			\item[($_{\xc}M_{\xr}$)]If $t(\alpha)\in Q_{\xc}$ and $s(\alpha)\in Q_{\xr}$, via $(A_{\xc})$ and $(A_{\xr})$,
			$$M_{\alpha}\simeq \begin{bmatrix}
				{_{\xc}\xc_{\xr}} & {_{\xc}\xc_{\xr}}
			\end{bmatrix}_{\alpha}  \mbox{, given by }\lceil \alpha\rceil \mapsto \begin{cases}
				\small{\begin{bmatrix} 1 &\;\;\im \end{bmatrix}_{\alpha}}  & \mbox{if } \sigma_{\alpha}=\mathrm{Id}\\
				\small{\begin{bmatrix} 1& -\im \end{bmatrix}_{\alpha}}  & \mbox{if } \sigma_{\alpha}=\overline{(?)}
			\end{cases}.$$ After multiplying idempotent, we have $M_{\alpha}\epsilon_{s(\alpha)}\simeq {\xc_{\alpha}}$.
			
			\item[($_{\xc}M_{\xh}$)]If $t(\alpha)\in Q_{\xc}$ and $s(\alpha)\in Q_{\xh}$, via $(A_{\xc})$ and $(A_{\xh})$,
			$$M_{\alpha}\simeq \begin{bmatrix}
				\xc & \xc
			\end{bmatrix}_{\alpha}, \mbox{ given by }\lceil \alpha\rceil \mapsto \begin{cases}
				\small{\begin{bmatrix} 1 &0 \end{bmatrix}_{\alpha}}  & \mbox{if } \sigma_{\alpha}=\mathrm{Id}\\
				\small{\begin{bmatrix} 0& 1 \end{bmatrix}_{\alpha}}  & \mbox{if } \sigma_{\alpha}=\overline{(?)}
			\end{cases}.$$ 
			
			\item[($_{\xc}M_{\xc}$)]If $t(\alpha)\in Q_{\xc}$ and $s(\alpha)\in Q_{\xc}$, via $(A_{\xc})$,
			$$M_{\alpha}\simeq \begin{cases} \xc_{\alpha}  & \mbox{if } \sigma_{\alpha}=\mathrm{Id}\\
				\overline{\xc}_{\alpha}  & \mbox{if } \sigma_{\alpha}=\overline{(?)}
			\end{cases}\mbox{, both given by }\lceil \alpha\rceil \mapsto 1_{\alpha}.$$
		\end{enumerate}
		
		We denote these isomorphisms by $\phi_1$. An idempotent of $T(\textbf{A}, \textbf{M})$ is given by $$\epsilon=(\epsilon_i)_{i\in Q_0}, \mbox{ where }\epsilon_i=\begin{cases}
			\dfrac{\lceil e_i\rceil +\lceil s_i\rceil }{2}& \mbox{if } i\in Q_{\xr}\\ \lceil e_i\rceil & \mbox{if }i\in Q_{\xh}\cup Q_{\xc}
		\end{cases}.$$

		\textbf{(2)} The map $\phi_2$.
		
		The map $\phi_1$ induces
		$$\epsilon T(\textbf{A},\textbf{M})\epsilon\simeq \phi_1(\epsilon) T(\phi_1(\textbf{A}),\phi_1(\textbf{M}))\phi_1(\epsilon).$$
		By the construction of $Q^\mathrm{b}$ and $\mathcal{M}$, we have: $$T(\phi_1(\epsilon)\phi_1(\textbf{A})\phi_1(\epsilon),\phi_1(\epsilon)\phi_1(\textbf{M})\phi_1(\epsilon))=T(\phi_1(\epsilon\mathbf{A} \epsilon), \phi_1(\epsilon \textbf{M} \epsilon))= T(Q^{\mathrm{b}},\mathcal{M}).$$
		Hence we only need to give $$\phi_2\colon\phi_1(\epsilon) T(\phi_1(\textbf{A}),\phi_1(\textbf{M}))\phi_1(\epsilon)\simeq T(\phi_1(\epsilon)\phi_1(\textbf{A})\phi_1(\epsilon),\phi_1(\epsilon)\phi_1(\textbf{M})\phi_1(\epsilon)).$$ This is a well-known result. Here we give a concrete map, which is needed later.
		
		To give isomorphisms
		\begin{align*}
			&\phi_1(\epsilon\textbf{M})\otimes_{\phi_1(\textbf{A})} \phi_1(\textbf{M})\otimes_{\phi_1(\textbf{A})}\cdots\otimes_{\phi_1(\textbf{A})}\phi_1(\textbf{M}\epsilon)\\
			\overset{\phi_2}{\simeq}& \phi_1(\epsilon\textbf{M}\epsilon)\otimes_{\phi_1(\epsilon\textbf{A}\epsilon)}\phi_1(\epsilon\textbf{M}\epsilon)\otimes_{\phi_1(\epsilon\textbf{A}\epsilon)}\cdots\otimes_{\phi_1(\epsilon\textbf{A}\epsilon)}\phi_1(\epsilon\textbf{M}\epsilon),
		\end{align*} we only need to give isomorphisms $$\phi_2:\phi_1(M_{\alpha})\otimes_{\phi_1(A_v)} \phi_1(M_{\beta})\simeq \phi_1(M_{\alpha}\epsilon_v)\otimes_{\phi_1(\epsilon_v A_v\epsilon_v)}\phi_1(\epsilon_v M_{\beta}),$$ for $\alpha,\beta\in Q_1\setminus\mathbb{S}$ with $s(\alpha)=t(\beta)=v$. 
		
		If $v\in Q_{\xh}$ or $Q_{\xc}$, then $\epsilon_v A_v\epsilon_v=A_v$ and set $\phi_2$ as the identity map. If $v\in Q_{\xr}$, that is, $\phi_1(A_v)=M_2(\xr)$, $\phi_2$ can be given by the isomorphisms in Lemma \ref{map 2} after taking appropriate rings and bimodules: $D=\xr$, $A,B\in\{M_2(\xr),\xh,\xc\}$, $M,N\in\{M_2(\xr),\xh,\xc\}$.

		Therefore, we have 
		\begin{align*}
			\Phi\colon\epsilon\xc_{\sigma}Q/\langle S\rangle\epsilon&=\epsilon T(\textbf{A},\textbf{M})\epsilon\overset{\phi_1}{\simeq}\phi_1(\epsilon) T(\phi_1(\textbf{A}),\phi_1(\textbf{M}))\phi_1(\epsilon)\\&\overset{\phi_2}{\simeq}T(\phi_1(\epsilon\textbf{A} \epsilon), \phi_1(\epsilon \textbf{M} \epsilon))= T(Q^{\mathrm{b}},\mathcal{M}).
		\end{align*}
	\end{proof}
	
	\begin{rem}
		(1) There are other isomorphisms from $\epsilon\xc_{\sigma}Q/\langle S\rangle\epsilon$ to $T(Q^\mathrm{b},\mathcal{M})$. To do computation, we fix the isomorphism given in the proof above: $$\Phi=\phi_2\circ\phi_1\colon\epsilon\xc_{\sigma}Q/\langle S\rangle\epsilon\simeq T(Q^\mathrm{b},\mathcal{M}).$$
		
		(2) Set $$\mathcal{M}(\alpha)^{\mathrm{b}}=\begin{cases}
			\mathcal{M}(\alpha) & \mbox{ if } \alpha^{\im} \mbox{ does not exist}\\ \mathcal{M}(\alpha)\oplus\mathcal{M}(\alpha^{\im}) & \mbox{ if } \alpha^{\im} \mbox{ exists}
		\end{cases}, \forall \alpha\in Q_1\setminus\mathbb{S}\subset Q^{\mathrm{b}}_1.$$
		
		Then $\Phi(\epsilon A_{t(\alpha)}\lceil\alpha\rceil A_{s(\alpha)}\epsilon)=\mathcal{M}(\alpha)^{\mathrm{b}}$.
	\end{rem}
	\begin{exm}
		Assume that $\xc_{\sigma}Q/\langle S\rangle$ is given as below: $$Q=\xymatrix{1\ar@(ul,ur)_{s_1}\ar[r]^{\alpha_1}& 2\ar@(ul,ur)_{s_2}\ar[r]^{\alpha_2}& 3\ar@(ul,ur)_{s_3}\ar[r]^{\alpha_3}& 4\ar[r]^{\alpha_4} &5}$$ with $1,2\in Q_{\xh}$, $3\in Q_{\xr}$, $4,5\in Q_{\xc}$, and $\sigma_{\alpha_4}=\overline{(?)}.$ Then $T(Q^\mathrm{b},\mathcal{M})$ can be depicted as:
		$$\xymatrix{\xh\ar@<0.5ex>[r]^{\xh}\ar@<-0.5ex>[r]_{\xh}& \xh\ar[r]^{\xh}& \xr\ar[r]^{\xc}& \xc\ar[r]^{\overline{\xc}} &\xc}.$$
	\end{exm}
	
	\medskip
	\subsection{The \texorpdfstring{$\xc$}{}-semilinear clannish algebras of gentle type}
	Let $\xc_{\sigma}Q/\langle S\rangle$ be an algebra as in the former subsections and $Z$ be a subset of paths in $Q$ with length of at least two. We require that no path in $Z$ starts or ends with a special loop or contains $s\circ s$ as a subpath for some special loop $s$; see \cite[Section 1]{BC2024}.

	\begin{defn}\label{csca of gentle}
		An algebra $\xc_{\sigma}Q/\langle S\cup Z\rangle$ given above is called \textbf{$\xc$-semilinear clannish of gentle type} if it satisfies the following conditions. \begin{enumerate}
			\item[(G1)] $\forall v\in Q_0$, $|v^+|\leq 2$ and $|v^-|\leq 2$.
			\item[(G2)] $\forall \alpha\in Q_1\setminus\mathbb{S}$, there is at most one arrow $\beta$ with $\alpha\beta$ a path not in $Z$ and at most one arrow $\gamma$ with $\gamma\alpha$ a path not in $Z$. 
			\\ $\forall \alpha\in Q_1\setminus\mathbb{S}$, there is at most one arrow $\beta$ with $\alpha\beta$ a path in $Z$ and at most one arrow $\gamma$ with $\gamma\alpha$ a path in $Z$.
			\item[(G3)] The paths in $Z$ are of length two.
		\end{enumerate}
	\end{defn}
    
	\begin{rem} (1) Let $v$ be a vertex in a $\xc$-semilinear clannish algebra $\xc_{\sigma}Q/\langle S\cup Z\rangle$ of gentle type such that $v\in Q_{\xr}\cup Q_{\xh}$. Then there is at most one ordinary arrow starting at $v$ and at most one ending at $v$. If $v^+=\{\alpha,s_v\}$ and $v^-=\{\beta,s_v\}$ with $\alpha\neq s_v\neq\beta$, then $\alpha\beta\in Z$.
    
		(2) A $\xc$-similinear clannish algebra of gentle type is a semilinear clannish algebra over $\xc$ which is normally bounded, non-singular and of (semi)simple type; see \cite[Section 1]{BC2024}.	
		
		(3) The algebra $\xc_{\sigma}Q/\langle S\cup Z\rangle$ can be infinite-dimensional over $\xc$ (and $\xr$).
	\end{rem}
	
	Given a $\xc$-semilinear clannish algebra of gentle type $\xc_{\sigma}Q/\langle S\cup Z\rangle$, we have already given a modulated quiver presentation 
	$$\Phi\colon\epsilon\xc_{\sigma}Q/\langle S\rangle\epsilon\simeq T(Q^\mathrm{b},\mathcal{M}).$$
	Now let $$I:=\Phi(\epsilon\langle\lceil p\rceil\,|\,p\in Z\rangle\epsilon)\subset\Phi(\epsilon\xc_{\sigma}Q/\langle S\rangle\epsilon)=T(Q^\mathrm{b},\mathcal{M}).$$
	It is obvious that $I$ is an ideal of $T(Q^\mathrm{b},\mathcal{M})$. It is an admissible ideal if $\xc_{\sigma}Q/\langle S\cup Z\rangle$ is finite-dimensional over $\xc$, or equivalently if $T(Q^\mathrm{b},\mathcal{M})/I$ is a finite-dimensional $\xr$-algebra. For convenience, we identify $\xc_{\sigma}Q/\langle S\cup Z\rangle$ with $(\xc_{\sigma}Q/\langle S\rangle)/ \langle\lceil p\rceil\,|\,p\in Z\rangle$ by the canonical isomorphism between them. Immediately, we have the following theorem.
	
	\begin{thm}\label{basic form of cq/sz}
		Given the notation as above, the isomorphism $\Phi$ induces $$\epsilon\xc_{\sigma}Q/\langle S\cup Z \rangle\epsilon\simeq T(Q^\mathrm{b},\mathcal{M})/I.$$ That is, $\xc_{\sigma}Q/\langle S\cup Z \rangle$ is Morita equivalent to $T(Q^\mathrm{b},\mathcal{M})/I$.
	\end{thm}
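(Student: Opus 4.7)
The plan is to show that $\Phi$ descends to the quotient via the third isomorphism theorem combined with the standard Morita-theoretic correspondence of ideals under a full idempotent.

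I would first set $R = \xc_{\sigma}Q/\langle S\rangle$ and $K = \langle \lceil p\rceil\,|\,p\in Z\rangle \subset R$, so that $\xc_{\sigma}Q/\langle S\cup Z\rangle \simeq R/K$ under the canonical identification already adopted. Since $\epsilon$ is a full idempotent of $R$ by Theorem \ref{CQS and TQM}, the canonical surjection $\pi\colon R\twoheadrightarrow R/K$ restricts to a surjective $\xr$-algebra homomorphism $\epsilon R\epsilon \twoheadrightarrow \epsilon(R/K)\epsilon$ whose kernel is $\epsilon R\epsilon \cap K$. A short check shows $\epsilon R \epsilon \cap K = \epsilon K\epsilon$: the inclusion $\supseteq$ is immediate from $K$ being two-sided, and for $\subseteq$, if $\epsilon x \epsilon \in K$ then $\epsilon x \epsilon = \epsilon(\epsilon x \epsilon)\epsilon \in \epsilon K\epsilon$. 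This yields an isomorphism $\epsilon(R/K)\epsilon \simeq \epsilon R \epsilon/\epsilon K\epsilon$.

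Next, I would apply the isomorphism $\Phi$ of Theorem \ref{CQS and TQM} to transport the right-hand side to $T(Q^{\mathrm{b}},\mathcal{M})/\Phi(\epsilon K \epsilon) = T(Q^{\mathrm{b}},\mathcal{M})/I$ by the very definition of $I$ given just before the theorem. This establishes the claimed isomorphism $\epsilon\xc_{\sigma}Q/\langle S\cup Z \rangle\epsilon\simeq T(Q^\mathrm{b},\mathcal{M})/I$.

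For the Morita equivalence, I would verify that the image $\bar\epsilon$ of $\epsilon$ in $R/K$ is again a full idempotent: the equality $R\epsilon R = R$ passes through $\pi$ to give $(R/K)\bar\epsilon(R/K) = R/K$. Hence $\xc_{\sigma}Q/\langle S\cup Z\rangle$ is Morita equivalent to $\bar\epsilon\,\xc_{\sigma}Q/\langle S\cup Z\rangle\,\bar\epsilon = \epsilon\,\xc_{\sigma}Q/\langle S\cup Z\rangle\,\epsilon$, which by the previous step is isomorphic to $T(Q^{\mathrm{b}},\mathcal{M})/I$. There is no real obstacle; the argument is essentially bookkeeping through the definitions, with the only substantive step being the identification $\epsilon R \epsilon \cap K = \epsilon K \epsilon$, which underlies both the classical correspondence of ideals under a full idempotent and the transfer of the quotient structure across $\Phi$.
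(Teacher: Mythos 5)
Your proof is correct and follows exactly the route the paper intends: the paper states this theorem as an immediate consequence of Theorem \ref{CQS and TQM} and the definition $I=\Phi(\epsilon\langle\lceil p\rceil\mid p\in Z\rangle\epsilon)$, and your argument simply makes the implicit bookkeeping explicit (the identification $\epsilon R\epsilon\cap K=\epsilon K\epsilon$, transport of the quotient across $\Phi$, and fullness of $\bar\epsilon$ in $R/K$). No gaps.
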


	Now we describe generators in $I$. Assume that $Z=\{\alpha_1\beta_1,\alpha_2\beta_2,\dots,\alpha_r\beta_r\}$, where $\alpha_i:v_i\rightarrow w_i$ and $\beta_i:u_i\rightarrow v_i$ are arrows in $Q_1\setminus \mathbb{S}$. According to the construction of $Q^{\mathrm{b}}$, the quiver $Q^{\mathrm{b}}$ contains vertices $u_i$, $v_i$, $w_i$; arrows $\alpha_i$, $\beta_i$, $\alpha_i^{\im}$ (exists if $\mathcal{M}(v_i)=\mathcal{M}(w_i)\neq\xc)$ and $\beta_i^{\im}$ (exists if $\mathcal{M}(u_i)=\mathcal{M}(v_i)\neq\xc)$. 
	Then the ideal $I$ can be generated by some $\mathcal{M}(w_i)$-$\mathcal{M}(u_i)$-submodules $I_{\alpha_i\beta_i}$ of $$\mathcal{M}(\alpha_i\beta_i)^{\mathrm{b}}:=\mathcal{M}(\alpha_i)^{\mathrm{b}}\otimes_{\mathcal{M}(v_i)}\mathcal{M}(\beta_i)^{\mathrm{b}},$$ with $\Phi(\epsilon\lceil \alpha_i\beta_i\rceil\epsilon)\in I_{\alpha_i\beta_i}, i=1,2,\dots,r$.
	
	Consider $\alpha\beta:u\rightarrow v\rightarrow w$ in $Z$. We describe $I_{\alpha\beta}$.
	
	(1) If $v\in Q_{\xc}$, that is, there is no special loop on $v$ and $\mathcal{M}(v)=\xc$ in $(Q^\mathrm{b},\mathcal{M})$, then $A_w\lceil\alpha\beta\rceil A_u=M_{\alpha}\otimes_{A_v}M_{\beta}$ as an $A_w$-$A_u$-bimodule. Hence $$\Phi(\epsilon\langle\lceil \alpha\beta\rceil\rangle\epsilon)=\Phi(\epsilon M_{\alpha}\otimes_{A_v}M_{\beta}\epsilon)= \mathcal{M}(\alpha\beta)^{\mathrm{b}}=\mathcal{M}(\alpha\beta).$$ Therefore $$I_{\alpha\beta}=\mathcal{M}(\alpha\beta), \mbox{ if }\mathcal{M}(s(\alpha))=\xc. $$
	
	(2) If $v\notin Q_{\xc}$, that is, there is a special loop $s$ on $v$, then as an $A_w$-$A_u$-bimodule, $M_{\alpha}\otimes_{A_v}M_{\beta}=A_w\lceil\alpha\beta\rceil A_u\oplus A_w\lceil\alpha s\beta\rceil A_u$. According to the construction of $T(Q^\mathrm{b},\mathcal{M})$, the modulated quivers given by $\alpha\beta$ are listed in Table \ref{phi(ab)}. 
	
	Via the map $\Phi$, we have a decomposition of $\mathcal{M}(w)$-$\mathcal{M}(u)$-bimodule, $$\mathcal{M}(\alpha\beta)^{\mathrm{b}}=\Phi(\epsilon\langle\lceil \alpha\beta\rceil\rangle\epsilon)\oplus\Phi(\epsilon\langle\lceil \alpha s \beta\rceil\rangle\epsilon).$$
	
	The values of $\Phi(\epsilon\lceil \alpha\beta\rceil\epsilon)$ with all the modulations and twists are listed in Table \ref{phi(ab)}, including the values of $\Phi(\epsilon\lceil \alpha s \beta\rceil\epsilon)$ with twists $\sigma_{\alpha}=\sigma_{\beta}=\mathrm{Id}$. Some detailed computations are shown later as examples. We denote the element $k$ in $\mathcal{M}(\gamma)$ by $k_{\gamma}$, $\gamma\in Q^{\mathrm{b}}_1$.
	
	Take any twist. Except for cases $\xr\xr\xr$ and $\xr\xh\xr$, as a $\mathcal{M}(w)$-$\mathcal{M}(u)$-bimodule, $$\mathcal{M}(\alpha\beta)^{\mathrm{b}}=\mathcal{M}(w)\Phi(\epsilon\lceil \alpha\beta\rceil\epsilon)\mathcal{M}(u)\oplus\mathcal{M}(w)\Phi(\epsilon\lceil \alpha s\beta\rceil\epsilon)\mathcal{M}(u).$$ For cases $\xr\xr\xr$ and $\xr\xh\xr$, \begin{align*}
		\mathcal{M}(\alpha\beta)^{\mathrm{b}}=&\mathcal{M}(w)\Phi(\epsilon\lceil \alpha\beta\rceil\epsilon)\mathcal{M}(u)\oplus\mathcal{M}(w)\Phi(\epsilon\lceil \alpha s\beta\rceil\epsilon)\mathcal{M}(u) \\ &\oplus\mathcal{M}(w)\Phi(\epsilon\im\lceil \alpha\beta\rceil\epsilon)\mathcal{M}(u)\oplus\mathcal{M}(w)\Phi(\epsilon\im\lceil \alpha s\beta\rceil\epsilon)\mathcal{M}(u);
	\end{align*} see examples below.
	
	Denote by $I_{\alpha\beta}^0$ the $\mathcal{M}(w)$-$\mathcal{M}(u)$-bimodule generated by $\Phi(\epsilon\lceil \alpha\beta\rceil\epsilon)$ (by $\Phi(\epsilon\lceil \alpha\beta\rceil\epsilon)$ and $\Phi(\epsilon\im\lceil \alpha\beta\rceil\epsilon)$ for cases $\xr\xr\xr$ and $\xr\xh\xr$) with $\sigma_{\alpha}=\sigma_{\beta}=\mathrm{Id}$, and by $I_{\alpha\beta}^1$ denote the $\mathcal{M}(w)$-$\mathcal{M}(u)$-bimodule generated by $\Phi(\epsilon\lceil \alpha s\beta\rceil\epsilon)$ (by $\Phi(\epsilon\lceil \alpha s\beta\rceil\epsilon)$ and $\Phi(\epsilon\im\lceil \alpha s\beta\rceil\epsilon)$ for cases $\xr\xr\xr$ and $\xr\xh\xr$) with $\sigma_{\alpha}=\sigma_{\beta}=\mathrm{Id}$. Then $$\mathcal{M}(\alpha\beta)^{\mathrm{b}}=I_{\alpha\beta}^0\oplus I_{\alpha\beta}^1.$$ Following the results in Table \ref{phi(ab)}, we see that for arbitrary twist, $$\Phi(\epsilon\langle\lceil \alpha\beta\rceil\rangle\epsilon)\cap\mathcal{M}(\alpha\beta)^{\mathrm{b}}=I_{\alpha\beta}^0 \mbox{ xor }I_{\alpha\beta}^1.$$ Hence $$I_{\alpha\beta}=I_{\alpha\beta}^0 \mbox{ xor }I_{\alpha\beta}^1, \mbox{ if }\mathcal{M}(s(\alpha))\neq\xc.$$
	
	\begin{table}
		
		\small{\begin{tabular}{ll|l|l}
				\cline{1-4}
				cases & \begin{tabular}[c]{@{}l@{}}modulated quivers\\ for $\mathcal{M}(\alpha\beta)^{\mathrm{b}}$ \end{tabular}  &\small{\begin{tabular}[c]{@{}l@{}}$\Phi(\epsilon\lceil\alpha\beta\rceil\epsilon)$ ($\Phi(\epsilon\im\lceil\alpha\beta\rceil\epsilon)$)\! \\with $\sigma_{\alpha}=\mathrm{Id},\sigma_{\beta}=\mathrm{Id}$\end{tabular}} & \small{\begin{tabular}[c]{@{}l@{}}$\Phi(\epsilon\lceil\alpha s\beta\rceil\epsilon)$ ($\Phi(\epsilon\im\lceil\alpha s\beta\rceil\epsilon)$)\!\\ with $\sigma_{\alpha}=\mathrm{Id},\sigma_{\beta}=\mathrm{Id}$\end{tabular}}\\ \cline{2-4} 
				&\tiny{$\Phi(\epsilon\lceil\alpha\beta\rceil\epsilon): \sigma_{\alpha}=\mathrm{Id},\sigma_{\beta}=\overline{(?)}$} &\tiny{$\Phi(\epsilon\lceil\alpha\beta\rceil\epsilon): \sigma_{\alpha}=\overline{(?)},\sigma_{\beta}=\mathrm{Id}$}& \tiny{$\Phi(\epsilon\lceil\alpha\beta\rceil\epsilon): \sigma_{\alpha}=\overline{(?)},\sigma_{\beta}=\overline{(?)}$}\\ \hline
				
				$\xr\xr\xr$ & \makecell{$\xymatrix@C=4ex@R=-1ex{\xr_u\ar@<.4ex>[r]^{\xr_\beta,\,\xr_{\beta^\im}}\ar@<-.4ex>[r] & \xr_v \ar@<.4ex>[r]^{\xr_\alpha,\,\xr_{\alpha^\im}}\ar@<-0.4ex>[r] & \xr_w}$} &\begin{tabular}[c]{@{}l@{}}$1_{\alpha}\otimes1_{\beta}-1_{\alpha^\im}\otimes1_{\beta^\im}$\\ \small{($-1_{\alpha}\otimes1_{\beta^\im}-\!1_{\alpha^\im}\otimes1_{\beta}$)}\end{tabular}  & \begin{tabular}[c]{@{}l@{}}$1_{\alpha}\otimes1_{\beta}+1_{\alpha^\im}\otimes1_{\beta^\im}$\\ \small{($1_{\alpha}\otimes1_{\beta^\im}-\!1_{\alpha^\im}\otimes1_{\beta}$)}\end{tabular} \\ \cline{2-4} 
				&  $1_{\alpha}\otimes1_{\beta}-1_{\alpha^\im}\otimes1_{\beta^\im}$ & $1_{\alpha}\otimes1_{\beta}+1_{\alpha^\im}\otimes1_{\beta^\im}$ & $1_{\alpha}\otimes1_{\beta}+1_{\alpha^\im}\otimes1_{\beta^\im}$ \\ \hline
				
				$\xr\xr\xh$ & \makecell{$\xymatrix@C=4ex@R=-1ex{\xr_u\ar@<.4ex>[r]^{\xr_\beta,\,\xr_{\beta^\im}}\ar@<-.4ex>[r] & \xr_v \ar[r]^{\xh_\alpha} & \xh_w}$} & $1_{\alpha}\otimes1_{\beta}+\jm_{\alpha}\otimes1_{\beta^\im}$ & $1_{\alpha}\otimes1_{\beta}-\jm_{\alpha}\otimes1_{\beta^\im}$ \\ \cline{2-4} 
				& $1_{\alpha}\otimes1_{\beta}+\jm_{\alpha}\otimes1_{\beta^\im}$ & $1_{\alpha}\otimes1_{\beta}-\jm_{\alpha}\otimes1_{\beta^\im}$ & $1_{\alpha}\otimes1_{\beta}-\jm_{\alpha}\otimes1_{\beta^\im}$ \\\hline

				$\xr\xr\xc$ & \makecell{$\xymatrix@C=4ex@R=-1ex{\xr_u\ar@<.4ex>[r]^{\xr_\beta,\,\xr_{\beta^\im}}\ar@<-.4ex>[r] & \xr_v \ar[r]^{\xc_\alpha} & \xc_w}$} & $1_{\alpha}\otimes1_{\beta}+\im_{\alpha}\otimes1_{\beta^\im}$ & $1_{\alpha}\otimes1_{\beta}-\im_{\alpha}\otimes1_{\beta^\im}$ \\ \cline{2-4} 
				& $1_{\alpha}\otimes1_{\beta}+\im_{\alpha}\otimes1_{\beta^\im}$ & $1_{\alpha}\otimes1_{\beta}-\im_{\alpha}\otimes1_{\beta^\im}$ & $1_{\alpha}\otimes1_{\beta}-\im_{\alpha}\otimes1_{\beta^\im}$ \\ \hline
				
				$\xh\xh\xr$ & \makecell{$\xymatrix@C=4ex@R=-1ex{\xh_u\ar@<.4ex>[r]^{\xh_\beta,\,\xh_{\beta^\im}}\ar@<-.4ex>[r] & \xh_v \ar[r]^{\xh_\alpha} & \xr_w}$} & $1_{\alpha}\otimes1_{\beta}+1_{\alpha}\otimes\jm_{\beta^\im}$ & $1_{\alpha}\otimes\km_{\beta}-1_{\alpha}\otimes\lm_{\beta^\im}$ \\ \cline{2-4} 
				&  $1_{\alpha}\otimes\km_{\beta}+1_{\alpha}\otimes\lm_{\beta^\im}$ & $1_{\alpha}\otimes1_{\beta}+1_{\alpha}\otimes\jm_{\beta^\im}$ & $1_{\alpha}\otimes\km_{\beta}+1_{\alpha}\otimes\lm_{\beta^\im}$ \\ \hline
				
				$\xh\xh\xh$ & \makecell{$\xymatrix@C=4ex@R=-1ex{\xh_u\ar@<.4ex>[r]^{\xh_\beta,\,\xh_{\beta^\im}}\ar@<-.4ex>[r] & \xh_v \ar@<.4ex>[r]^{\xh_\alpha,\,\xh_{\alpha^\im}}\ar@<-0.4ex>[r] & \xh_w}$} & \begin{tabular}[c]{@{}l@{}}$1_{\alpha}\otimes1_{\beta}-1_{\alpha^\im}\otimes1_{\beta^\im}$\\$\quad+1_{\alpha}\otimes\jm_{\beta^\im}+1_{\alpha^\im}\otimes\jm_{\beta}$\end{tabular} & \begin{tabular}[c]{@{}l@{}}$1_{\alpha}\otimes\km_{\beta}+1_{\alpha^\im}\otimes\km_{\beta^\im}$\\$\quad-1_{\alpha}\otimes\lm_{\beta^\im}+1_{\alpha^\im}\otimes\lm_{\beta}$\end{tabular} \\ \cline{2-4} 
				&  \begin{tabular}[c]{@{}l@{}}$1_{\alpha}\otimes\km_{\beta}-1_{\alpha^\im}\otimes\km_{\beta^\im}$\\$\quad+1_{\alpha}\otimes\lm_{\beta^\im}+1_{\alpha^\im}\otimes\lm_{\beta}$\end{tabular} & \begin{tabular}[c]{@{}l@{}}$1_{\alpha}\otimes\km_{\beta}+1_{\alpha^\im}\otimes\km_{\beta^\im}$\\$\quad-1_{\alpha}\otimes\lm_{\beta^\im}+1_{\alpha^\im}\otimes\lm_{\beta}$\end{tabular} & \begin{tabular}[c]{@{}l@{}}$-1_{\alpha}\otimes1_{\beta}-1_{\alpha^\im}\otimes1_{\beta^\im}$\\$\quad+1_{\alpha}\otimes\jm_{\beta^\im}-1_{\alpha^\im}\otimes\jm_{\beta}$\end{tabular}  \\ \hline
				
				$\xh\xh\xc$ & \makecell{$\xymatrix@C=4ex@R=-1ex{\xh_u\ar@<.4ex>[r]^{\xh_\beta,\,\xh_{\beta^\im}}\ar@<-.4ex>[r] & \xh_v \ar[r]^{\xc^2_\alpha} & \xc_w}$} & $\tiny{\begin{bmatrix}1&0\end{bmatrix}}_{\alpha}\otimes(1_{\beta}+\jm_{\beta^\im})$ &  $\tiny{\begin{bmatrix}1&0\end{bmatrix}}_{\alpha}\otimes(\km_{\beta}-\lm_{\beta^\im})$  \\ \cline{2-4} 
				&  $\tiny{\begin{bmatrix}1&0\end{bmatrix}}_{\alpha}\otimes(\km_{\beta}+\lm_{\beta^\im})$ & $\tiny{\begin{bmatrix}1&0\end{bmatrix}}_{\alpha}\otimes(\km_{\beta}-\lm_{\beta^\im})$ & $\tiny{\begin{bmatrix}1&0\end{bmatrix}}_{\alpha}\otimes(-1_{\beta}+\jm_{\beta^\im})$ \\ \hline
				
				$\xh\xr\xr$ & \makecell{$\xymatrix@C=4ex@R=-1ex{\xh_u\ar[r]^{\xh_\beta} & \xr_v \ar@<.4ex>[r]^{\xr_\alpha,\,\xr_{\alpha^\im}}\ar@<-0.4ex>[r] & \xr_w}$} & $1_{\alpha}\otimes1_{\beta}+1_{\alpha^\im}\otimes\jm_{\beta}$ &  $1_{\alpha}\otimes1_{\beta}-1_{\alpha^\im}\otimes\jm_{\beta}$   \\ \cline{2-4} 
				&  $1_{\alpha}\otimes1_{\beta}-1_{\alpha^\im}\otimes\jm_{\beta}$ & $1_{\alpha}\otimes1_{\beta}-1_{\alpha^\im}\otimes\jm_{\beta}$ & $1_{\alpha}\otimes1_{\beta}+1_{\alpha^\im}\otimes\jm_{\beta}$  \\ \hline
				
				$\xc\xr\xr$ & \makecell{$\xymatrix@C=4ex@R=-1ex{\xc_u\ar[r]^{\xc_\beta} & \xr_v \ar@<.4ex>[r]^{\xr_\alpha,\,\xr_{\alpha^\im}}\ar@<-0.4ex>[r] & \xr_w}$} & $1_{\alpha}\otimes1_{\beta}+1_{\alpha^\im}\otimes\im_{\beta}$ &  $1_{\alpha}\otimes1_{\beta}-1_{\alpha^\im}\otimes\im_{\beta}$   \\ \cline{2-4} 
				&  $1_{\alpha}\otimes1_{\beta}-1_{\alpha^\im}\otimes\im_{\beta}$ & $1_{\alpha}\otimes1_{\beta}-1_{\alpha^\im}\otimes\im_{\beta}$ & $1_{\alpha}\otimes1_{\beta}+1_{\alpha^\im}\otimes\im_{\beta}$  \\ \hline
				
				$\xr\xh\xh$ & \makecell{$\xymatrix@C=4ex@R=-1ex{\xr_u\ar[r]^{\xh_\beta} & \xh_v \ar@<.4ex>[r]^{\xh_\alpha,\,\xh_{\alpha^\im}}\ar@<-0.4ex>[r] & \xh_w}$} & $1_{\alpha}\otimes1_{\beta}+1_{\alpha^\im}\otimes\jm_{\beta}$ &  $1_{\alpha}\otimes\km_{\beta}+1_{\alpha^\im}\otimes\lm_{\beta}$ \\ \cline{2-4} 
				& $1_{\alpha}\otimes1_{\beta}+1_{\alpha^\im}\otimes\jm_{\beta}$ & $1_{\alpha}\otimes\km_{\beta}+1_{\alpha^\im}\otimes\lm_{\beta}$ & $1_{\alpha}\otimes\km_{\beta}+1_{\alpha^\im}\otimes\lm_{\beta}$  \\ \hline
				
				$\xc\xh\xh$ & \makecell{$\xymatrix@C=4ex@R=-1ex{\xc_u\ar[r]^{\xc^2_\beta} & \xh_v \ar@<.4ex>[r]^{\xh_\alpha,\,\xh_{\alpha^\im}}\ar@<-0.4ex>[r] & \xh_w}$} & $1_{\alpha}\otimes\tiny{\begin{bmatrix}1\\0\end{bmatrix}}_{\beta}+\jm_{\alpha^\im}\otimes\tiny{\begin{bmatrix}1\\0\end{bmatrix}}_{\beta}$ &  $\km_{\alpha}\otimes\tiny{\begin{bmatrix}1\\0\end{bmatrix}}_{\beta}+\lm_{\alpha^\im}\otimes\tiny{\begin{bmatrix}1\\0\end{bmatrix}}_{\beta}$  \\ \cline{2-4} 
				&  $-\km_{\alpha}\otimes\tiny{\begin{bmatrix}1\\0\end{bmatrix}}_{\beta}\!-\lm_{\alpha^\im}\otimes\tiny{\begin{bmatrix}1\\0\end{bmatrix}}_{\beta}$ & $\km_{\alpha}\otimes\tiny{\begin{bmatrix}1\\0\end{bmatrix}}_{\beta}+\lm_{\alpha^\im}\otimes\tiny{\begin{bmatrix}1\\0\end{bmatrix}}_{\beta}$ & $1_{\alpha}\otimes\tiny{\begin{bmatrix}1\\0\end{bmatrix}}_{\beta}+\jm_{\alpha^\im}\otimes\tiny{\begin{bmatrix}1\\0\end{bmatrix}}_{\beta}$  \\ \hline
				
				$\xh\xr\xh$ & \makecell{$\xymatrix@C=4ex@R=-1ex{\xh_u\ar@<-.4ex>[r]^{\xh_\beta} & \xr_v \ar@<-.4ex>[r]^{\xh_\alpha} & \xh_w}$} & $1_{\alpha}\otimes1_{\beta}-\jm_{\alpha}\otimes\jm_{\beta}$ &  $1_{\alpha}\otimes1_{\beta}+\jm_{\alpha}\otimes\jm_{\beta}$  \\ \cline{2-4} 
				&$1_{\alpha}\otimes1_{\beta}+\jm_{\alpha}\otimes\jm_{\beta}$ & $1_{\alpha}\otimes1_{\beta}+\jm_{\alpha}\otimes\jm_{\beta}$ & $1_{\alpha}\otimes1_{\beta}-\jm_{\alpha}\otimes\jm_{\beta}$  \\ \hline
				
				$\xh\xr\xc$ & \makecell{$\xymatrix@C=4ex@R=-1ex{\xh_u\ar@<-.4ex>[r]^{\xh_\beta} & \xr_v \ar@<-.4ex>[r]^{\xc_\alpha} & \xc_w}$}  & $1_{\alpha}\otimes1_{\beta}-\im_{\alpha}\otimes\jm_{\beta}$ &  $1_{\alpha}\otimes1_{\beta}+\im_{\alpha}\otimes\jm_{\beta}$ \\ \cline{2-4} 
				& $1_{\alpha}\otimes1_{\beta}+\im_{\alpha}\otimes\jm_{\beta}$ & $1_{\alpha}\otimes1_{\beta}+\im_{\alpha}\otimes\jm_{\beta}$ & $1_{\alpha}\otimes1_{\beta}-\im_{\alpha}\otimes\jm_{\beta}$  \\ \hline
				
				$\xc\xr\xh$ & \makecell{$\xymatrix@C=4ex@R=-1ex{\xc_u\ar@<-.4ex>[r]^{\xc_\beta} & \xr_v \ar@<-.4ex>[r]^{\xh_\alpha} & \xh_w}$}  & $1_{\alpha}\otimes1_{\beta}-\jm_{\alpha}\otimes\im_{\beta}$ &  $1_{\alpha}\otimes1_{\beta}+\jm_{\alpha}\otimes\im_{\beta}$\\ \cline{2-4} 
				& $1_{\alpha}\otimes1_{\beta}+\jm_{\alpha}\otimes\im_{\beta}$ & $1_{\alpha}\otimes1_{\beta}+\jm_{\alpha}\otimes\im_{\beta}$ & $1_{\alpha}\otimes1_{\beta}-\jm_{\alpha}\otimes\im_{\beta}$ \\ \hline
				
				$\xc\xr\xc$ & \makecell{$\xymatrix@C=4ex@R=-1ex{\xc_u\ar@<-.4ex>[r]^{\xc_\beta} & \xr_v \ar@<-.4ex>[r]^{\xc_\alpha} & \xc_w}$}  & $1_{\alpha}\otimes1_{\beta}-\im_{\alpha}\otimes\im_{\beta}$ &  $1_{\alpha}\otimes1_{\beta}+\im_{\alpha}\otimes\im_{\beta}$ \\ \cline{2-4} 
				& $1_{\alpha}\otimes1_{\beta}+\im_{\alpha}\otimes\im_{\beta}$ & $1_{\alpha}\otimes1_{\beta}+\im_{\alpha}\otimes\im_{\beta}$ & $1_{\alpha}\otimes1_{\beta}-\im_{\alpha}\otimes\im_{\beta}$ \\ \hline
				
				$\xr\xh\xc$ & \makecell{$\xymatrix@C=4ex@R=-1ex{\xr_u\ar@<-.4ex>[r]^{\xh_\beta} & \xh_v \ar@<-.4ex>[r]^{\xc^2_\alpha} & \xc_w}$}  & $\tiny{\begin{bmatrix}1&0\end{bmatrix}}_{\alpha}\otimes1_{\beta}$ &  $\tiny{\begin{bmatrix}1&0\end{bmatrix}}_{\alpha}\otimes\km_{\beta}$\\ \cline{2-4} 
				& $\tiny{\begin{bmatrix}1&0\end{bmatrix}}_{\alpha}\otimes1_{\beta}$ & $\tiny{\begin{bmatrix}1&0\end{bmatrix}}_{\alpha}\otimes\km_{\beta}$ & $\tiny{\begin{bmatrix}1&0\end{bmatrix}}_{\alpha}\otimes\km_{\beta}$ \\ \hline
				
				$\xc\xh\xr$ & \makecell{$\xymatrix@C=4ex@R=-1ex{\xc_u\ar@<-.4ex>[r]^{\xc^2_\beta} & \xh_v \ar@<-.4ex>[r]^{\xh_\alpha} & \xr_w}$}  &$1_{\alpha}\otimes\tiny{\begin{bmatrix}1\\0\end{bmatrix}}_{\beta}$ &  $-\km_{\alpha}\otimes\tiny{\begin{bmatrix}1\\0\end{bmatrix}}_{\beta}$ \\ \cline{2-4} 
				& $-\km_{\alpha}\otimes\tiny{\begin{bmatrix}1\\0\end{bmatrix}}_{\beta}$ & $1_{\alpha}\otimes\tiny{\begin{bmatrix}1\\0\end{bmatrix}}_{\beta}$ & $-\km_{\alpha}\otimes\tiny{\begin{bmatrix}1\\0\end{bmatrix}}_{\beta}$\\ \hline
				
				$\xc\xh\xc$ & \makecell{$\xymatrix@C=4ex@R=-1ex{\xc_u\ar@<-.4ex>[r]^{\xc^2_\beta} & \xh_v \ar@<-.4ex>[r]^{\xc^2_\alpha} & \xc_w}$}  & $\tiny{\begin{bmatrix}1&0\end{bmatrix}}_{\alpha}\otimes\tiny{\begin{bmatrix}1\\0\end{bmatrix}}_{\beta}$ &  $\tiny{\begin{bmatrix}1&0\end{bmatrix}}_{\alpha}\otimes\tiny{\begin{bmatrix}0\\1\end{bmatrix}}_{\beta}$ \\ \cline{2-4} 
				& $\tiny{\begin{bmatrix}1&0\end{bmatrix}}_{\alpha}\otimes\tiny{\begin{bmatrix}0\\1\end{bmatrix}}_{\beta}$ & $\tiny{-\begin{bmatrix}1&0\end{bmatrix}}_{\alpha}\otimes\tiny{\begin{bmatrix}0\\1\end{bmatrix}}_{\beta}$ & $\tiny{\begin{bmatrix}1&0\end{bmatrix}}_{\alpha}\otimes\tiny{\begin{bmatrix}1\\0\end{bmatrix}}_{\beta}$ \\ \hline
				
				$\xr\xh\xr$ & \makecell{$\xymatrix@C=4ex@R=-1ex{\xr_u\ar@<-.4ex>[r]^{\xh_\beta} & \xh_v \ar@<-.4ex>[r]^{\xh_\alpha} & \xr_w}$}  & $1_{\alpha}\otimes1_{\beta}$ ($1_{\alpha}\otimes\jm_{\beta}$) & $1_{\alpha}\otimes\km_{\beta}$ ($1_{\alpha}\otimes\lm_{\beta}$)\\ \cline{2-4} 
				& $1_{\alpha}\otimes1_{\beta}$ & $1_{\alpha}\otimes1_{\beta}$ & $1_{\alpha}\otimes1_{\beta}$  \\ \hline
		\end{tabular}}
		\caption{Generators in $I$ of $T(Q^\mathrm{b},\mathcal{M})$}\label{phi(ab)}
	\end{table}
	
	\begin{rem}\label{4.1=3.1}
		Quivers with modulations in Table \ref{phi(ab)} are exactly the 18 cases in Table \ref{spgenv}, with $I_{\alpha\beta}^0=I^0_v$ and $I_{\alpha\beta}^1=I^1_v$.
	\end{rem}

	\begin{exm}
		We do computation and check the above conclusions for two cases. We use the notation above and in Table \ref{phi(ab)}.
		
		Case $\xr\xr\xh$. Assume that $u,v\in Q_\xr$, $w\in Q_\xh$ and $\sigma_{\alpha}=\sigma_{\beta}=\mathrm{Id}$. Then
		\begin{align*}
			&\Phi(\epsilon\lceil \alpha\beta\rceil\epsilon)=\phi_2\phi_1(\epsilon_w\lceil \alpha\rceil\otimes\lceil\beta\rceil\epsilon_u)\\
			=&\phi_2(\begin{bmatrix}1&\jm\end{bmatrix}_{\alpha}\otimes_{M_2(\xr)}\begin{bmatrix}1&0\\0&1\end{bmatrix}_{\beta}+\begin{bmatrix}1&\jm\end{bmatrix}_{\alpha}\otimes_{M_2(\xr)}\begin{bmatrix}0&-1\\1&0\end{bmatrix}_{\beta^\im})\begin{bmatrix}1 &0\\0&0\end{bmatrix}_u\\
			=&(1_{\alpha}\otimes_{\xr}\begin{bmatrix}1&0\end{bmatrix}_{\beta}+\jm_{\alpha}\otimes_{\xr}\begin{bmatrix}0&1\end{bmatrix}_{\beta}+1_{\alpha}\otimes_{\xr}\begin{bmatrix}0&-1\end{bmatrix}_{\beta^\im}+\jm_{\alpha}\otimes_{\xr}\begin{bmatrix}1&0\end{bmatrix}_{\beta^\im})\begin{bmatrix}1 &0\\0&0\end{bmatrix}_u\\
			=&1_{\alpha}\otimes1_{\beta}+\jm_{\alpha}\otimes 1_{\beta^\im}.
		\end{align*} 
		\begin{align*}
			&\Phi(\epsilon\lceil \alpha s \beta\rceil\epsilon)=\phi_2\phi_1(\epsilon_w\lceil \alpha s\rceil\otimes\lceil\beta\rceil\epsilon_u)\\
			=&\phi_2(\begin{bmatrix}1&\jm\end{bmatrix}\begin{bmatrix}1&0\\0&-1\end{bmatrix}_v\otimes\begin{bmatrix}1&0\\0&1\end{bmatrix}_{\beta}+\begin{bmatrix}1&\jm\end{bmatrix}\begin{bmatrix}1&0\\0&-1\end{bmatrix}_v\otimes\begin{bmatrix}0&-1\\1&0\end{bmatrix}_{\beta^\im})\begin{bmatrix}1&0\\0&0\end{bmatrix}\\
			=&\phi_2(\begin{bmatrix}1&-\jm\end{bmatrix}\otimes\begin{bmatrix}1&0\\0&1\end{bmatrix}_{\beta}+\begin{bmatrix}1&-\jm\end{bmatrix}\otimes\begin{bmatrix}0&-1\\1&0\end{bmatrix}_{\beta^\im})\begin{bmatrix}1 &0\\0&0\end{bmatrix}\\
			=&(1\otimes\begin{bmatrix}1&0\end{bmatrix}_{\beta}-\jm\otimes\begin{bmatrix}0&1\end{bmatrix}+1\otimes\begin{bmatrix}0&-1\end{bmatrix}-\jm\otimes\begin{bmatrix}1&0\end{bmatrix}_{\beta^\im})\begin{bmatrix}1 &0\\0&0\end{bmatrix}\\
			=&1_{\alpha}\otimes1_{\beta}-\jm_{\alpha}\otimes 1_{\beta^\im}.
		\end{align*} 
		In this case, $I_{\alpha\beta}=\xh_w(1_{\alpha}\otimes1_{\beta}+\jm_{\alpha}\otimes 1_{\beta^\im})\xr_u=I_{\alpha\beta}^0.$
		
		Case $\xr\xh\xr$. Assume that $u,w\in Q_\xr$, $v\in Q_\xh$ and $\sigma_{\alpha}=\sigma_{\beta}=\mathrm{Id}$. Then
		\begin{align*}
			&\Phi(\epsilon\lceil \alpha\beta\rceil\epsilon)=\phi_2\phi_1(\epsilon_w\lceil \alpha\rceil\otimes\lceil\beta\rceil\epsilon_u)\\
			=&\begin{bmatrix}1&0\\0&0\end{bmatrix}_w\phi_2(\begin{bmatrix}1\\-\jm\end{bmatrix}_{\alpha}\otimes_\xh\begin{bmatrix}1&\jm\end{bmatrix}_{\beta})\begin{bmatrix}1&0\\0&0\end{bmatrix}_u=\begin{bmatrix}1&0\\0&0\end{bmatrix}_w(\begin{bmatrix}1\\-\jm\end{bmatrix}_{\alpha}\otimes_\xh\begin{bmatrix}1&\jm\end{bmatrix}_{\beta})\begin{bmatrix}1&0\\0&0\end{bmatrix}_u\\
			=&1_{\alpha}\otimes1_{\beta},\\
			&\Phi(\epsilon\lceil \alpha s \beta\rceil\epsilon)=\phi_2\phi_1(\epsilon_w\lceil \alpha s\rceil\otimes\lceil\beta\rceil\epsilon_u)&\\
			=&\begin{bmatrix}1&0\\0&0\end{bmatrix}\phi_2((\begin{bmatrix}1\\-\jm\end{bmatrix}\km_v)\otimes\begin{bmatrix}1&\jm\end{bmatrix})\begin{bmatrix}1&0\\0&0\end{bmatrix}=\begin{bmatrix}1&0\\0&0\end{bmatrix}(\begin{bmatrix}\km\\-\lm\end{bmatrix}\otimes\begin{bmatrix}1&\jm\end{bmatrix})\begin{bmatrix}1&0\\0&0\end{bmatrix}\\
			=&\km_{\alpha}\otimes1_{\beta},\\
			&\Phi(\epsilon\im\lceil \alpha\beta\rceil\epsilon)=\phi_2\phi_1(\epsilon_w\lceil \im\alpha\rceil\otimes\lceil\beta\rceil\epsilon_u)\\
			=&\begin{bmatrix}1&0\\0&0\end{bmatrix}\phi_2(\begin{bmatrix}0&-1\\1&0\end{bmatrix}\begin{bmatrix}1\\-\jm\end{bmatrix}\otimes\begin{bmatrix}1&\jm\end{bmatrix})\begin{bmatrix}1&0\\0&0\end{bmatrix}=\begin{bmatrix}1&0\\0&0\end{bmatrix}(\begin{bmatrix}\jm\\1\end{bmatrix}\otimes\begin{bmatrix}1&\jm\end{bmatrix})\begin{bmatrix}1&0\\0&0\end{bmatrix}\\
			=&\jm_{\alpha}\otimes1_{\beta},\\
			&\Phi(\epsilon\im\lceil \alpha s\beta\rceil\epsilon)=\phi_2\phi_1(\epsilon_w\lceil \im\alpha s \rceil\otimes\lceil\beta\rceil\epsilon_u)\\
            =&\begin{bmatrix}1&0\\0&0\end{bmatrix}\phi_2(\begin{bmatrix}0&-1\\1&0\end{bmatrix}\begin{bmatrix}1\\-\jm\end{bmatrix}\km\otimes\begin{bmatrix}1&\jm\end{bmatrix})\begin{bmatrix}1&0\\0&0\end{bmatrix}=\begin{bmatrix}1&0\\0&0\end{bmatrix}(\begin{bmatrix}\lm\\1\end{bmatrix}\otimes\begin{bmatrix}1&\jm\end{bmatrix})\begin{bmatrix}1&0\\0&0\end{bmatrix}\\
			=&\lm_{\alpha}\otimes1_{\beta}.
		\end{align*} 
		As a $\mathcal{M}(w)$-$\mathcal{M}(u)$-bimodule, $\mathcal{M}(\alpha\beta)^{\mathrm{b}}=\mathcal{M}(\alpha)\otimes\mathcal{M}(\beta)$ can be generated by these images of $\Phi$. In this case, $$I_{\alpha\beta}=\xr_w(1_{\alpha}\otimes1_{\beta})\xr_u\oplus\xr_w(\jm_{\alpha}\otimes1_{\beta})\xr_u=I_{\alpha\beta}^0.$$
	\end{exm}

	\begin{rem}\label{asb to I1}
		(1) Let $\xc_{\sigma}Q/\langle S\cup Z\rangle$ be a $\xc$-semilinear clannish algebras of gentle type. The results in Table \ref{phi(ab)} depend on the choice of isomorphisms $\phi_1$ and $\Phi$. But $I_{\alpha\beta}^0=I_v^0$ and $I_{\alpha\beta}^1=I_v^1$ are subbimodules of $\mathcal{M}(\alpha\beta)^\mathrm{b}$, which do not depend on these isomorphisms and twists. 
		
		(2) For the cases with $\sigma_\alpha=\sigma_\beta=\mathrm{Id}$ in Table \ref{phi(ab)}, we always have that $\Phi(\epsilon\lceil \alpha\beta\rceil\epsilon)$ belongs to $I_{\alpha\beta}^0$ and $\Phi(\epsilon\lceil \alpha s\beta\rceil\epsilon)$ to $I_{\alpha\beta}^1$. 
        
        (3) Consider $\xc_{\sigma}Q/\langle S\rangle$ whose modulated quiver presentation is in one of the 18 cases in Table \ref{phi(ab)}. That is, $$Q=\xymatrix{u\ar@(ul,ur)_{s_u}\ar[r]^{\beta}& v\ar@(ul,ur)_{s_v}\ar[r]^{\alpha}& w\ar@(ul,ur)_{s_w}} \mbox{ with } \mathbb{S}=\{s_u,s_v,s_w\},$$
		where $s_u$ or $s_w$ can be omitted if $u$ or $w$ has no special loop on it, respectively.
		
		Let $m_{\alpha}, t_{\alpha}, m_{\beta}, t_{\beta}, t$ be five integers with $t=m_{\alpha}+m_{\beta}-t_{\alpha}-t_{\beta}$. Let $\sigma$ be the twist given by $\sigma_{\alpha}=\sigma_{\beta}=\mathrm{Id}$ and $\delta$ be another twist given by $\delta_{\alpha}=\overline{(?)}^{t_{\alpha}}$ and $\delta_{\beta}=\overline{(?)}^{t_{\beta}}$. Consider a $\xc$-linear map $$f:\xc_{\sigma}Q\rightarrow \xc_{\delta}Q\mbox{ induced by }f(x)=\begin{cases}
			x &\mbox{ if }x\in\{e_u,e_v,e_w,s_u,s_v,s_w\}\\
			s_w^{m_{\alpha}}\alpha s_v^{t_{\alpha}-m_{\alpha}} &\mbox{ if }x=\alpha\\
			s_v^{t_{\beta}-m_{\beta}}\beta s_u^{m_{\beta}} &\mbox{ if }x=\beta
		\end{cases}.$$ Here, we take $m_{\alpha}=0$ (or $m_{\beta}=0$) and $s_w^0=\mathrm{Id}$ (resp. $s_u^0=\mathrm{Id}$) if $s_w$ (resp. $s_u$) is omitted. Then $f$ induces an isomorphism of $\xc$-algebras
		$$\tilde{f}\colon\xc_{\sigma}Q/\langle S\rangle\rightarrow\xc_{\delta}Q/\langle S\rangle.$$
		We have $\tilde{f}(\lceil\alpha s_v^t\beta\rceil)=\lceil s_w^m\alpha\beta s_u^n\rceil$. And it induces an isomorphism
		$$\xc_{\sigma}Q/\langle S\cup \{\alpha s_v^t\beta\}\rangle\rightarrow\xc_{\delta}Q/\langle S\cup \{\alpha\beta\}\rangle.$$
		In particular, if we fix $t=1$, one can choose a proper twist $\delta$ and an isomorphism $\Phi\circ \tilde{f}^{-1}\colon\xc_{\delta}Q/\langle S\rangle\rightarrow T(Q^\mathrm{b},\mathcal{M})$ which maps $\epsilon\lceil \alpha\beta\rceil\epsilon$ into $I^1_{\alpha\beta}$.
	\end{rem}
	\medskip
	
	\subsection{The \texorpdfstring{$\xc$}{}-semilinear clannish algebras of gentle type and the locally complexified-gentle algebras}
	
	We investigate the relation between $\xc$-semilinear clannish algebras of gentle type and locally complexified-gentle algebras. 
	
	\begin{prop}\label{semicgenissprgen}
		Any $\xc$-semilinear clannish algebra of gentle type is locally complexified-gentle of special type.
	\end{prop}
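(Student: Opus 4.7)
The plan is to combine Theorem~\ref{basic form of cq/sz} with the characterization of locally complexified-gentle algebras of special type provided by Theorem~\ref{main} and Definition~\ref{two types of cmp-gentle}. Given a $\xc$-semilinear clannish algebra of gentle type $\xc_\sigma Q/\langle S\cup Z\rangle$, Theorem~\ref{basic form of cq/sz} identifies its basic algebra with $T(Q^{\mathrm{b}},\mathcal{M})/I$, where $I$ is generated by the subbimodules $I_{\alpha\beta}\subset\mathcal{M}(\alpha\beta)^{\mathrm{b}}$ described at the end of Section~4.3. It therefore suffices to verify that $T(Q^{\mathrm{b}},\mathcal{M})/I$ satisfies condition~(3) of Theorem~\ref{main}.

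First, for each $v\in Q_\xc$ I will check that $v$ is ordinarily gentle with $\xc$ in $T(Q^{\mathrm{b}},\mathcal{M})/I$. Since such a vertex carries no special loop, the arrows at $v$ in $Q$ correspond bijectively to those at $v$ in $Q^{\mathrm{b}}$ with no doubling, so clannish condition~(G1) directly gives $|v^+|,|v^-|\leq 2$ in $Q^{\mathrm{b}}$. Condition~(G2) of ``ordinarily gentle with $\xc$'' then follows from clannish~(G2) once I note that by case~(1) of the analysis at the end of Section~4.3, $I_{\alpha\beta}=\mathcal{M}(\alpha\beta)$ whenever the middle vertex of $\alpha\beta$ lies in $Q_\xc$; the same identity delivers~(G3).

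Next, for each $v\in Q_\xr\cup Q_\xh$ I will argue that $v$ is specially gentle. The local modulated quiver of $v$ in $Q^{\mathrm{b}}$ realizes one of the eighteen configurations of Table~\ref{spgenv} or a degenerated source/sink case, depending on which ordinary arrows at $v$ exist. When both an ordinary outgoing arrow $\alpha$ and an ordinary incoming arrow $\beta$ are present, clannish~(G2), combined with the convention that no path in $Z$ starts or ends with a special loop (so that $\alpha s_v$ and $s_v\beta$ are automatically outside $Z$), forces $\alpha\beta\in Z$; hence $I_v=I_{\alpha\beta}$, and by Remark~\ref{4.1=3.1} this equals exactly one of $I_v^0$ or $I_v^1$, so $v$ is specially gentle of type~(1). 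When either $v^+$ or $v^-$ is empty in $Q^{\mathrm{b}}$, $v$ is a source or sink and falls into one of the degenerated cases, with no further condition to check. Finally, $I$ is generated by $\{I_v : v\in Q_0^{\mathrm{b}}\}$ because each defining $I_{\alpha\beta}$ is contained in $I_v$ for the middle vertex $v$, and each $I_v$ lies inside $I$. Then Theorem~\ref{main}(3) together with Definition~\ref{two types of cmp-gentle}(3) completes the argument.

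The main delicacy is the ``forcing'' step in the specially gentle analysis: I must exploit the asymmetry in clannish~(G2) introduced by the exclusion of paths beginning or ending with a special loop, in order to rule out the possibility $I_v=0$ at internal vertices of $Q_\xr\cup Q_\xh$, which would violate the ``xor'' requirement in the definition of a specially gentle vertex. Once this point is secured, the remaining verifications are a routine translation through the data compiled in Tables~\ref{spgenv} and~\ref{phi(ab)}.
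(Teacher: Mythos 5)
Your proposal is correct and follows essentially the same route as the paper: reduce to $T(Q^{\mathrm{b}},\mathcal{M})/I$ via Theorem~\ref{basic form of cq/sz} and then verify condition~(3) of Theorem~\ref{main} vertex by vertex, using the description of $I_{\alpha\beta}$ from Section~4.3 and Remark~\ref{4.1=3.1}. Your explicit ``forcing'' step (using that $\alpha s_v$ and $s_v\beta$ cannot lie in $Z$ to conclude $\alpha\beta\in Z$ at every internal vertex carrying a special loop) is left implicit in the paper's proof but is exactly the justification needed there.
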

	\begin{proof}
		Assume that $\xc_{\sigma}Q/\langle S\cup Z\rangle$ is a $\xc$-semilinear clannish algebra of gentle type, where $Z=\{ \alpha_1\beta_1, \alpha_2\beta_2,\dots,\alpha_r\beta_r \}$. By Theorem \ref{basic form of cq/sz} and the description of $I$ in the former subsection, $\xc_{\sigma}Q/\langle S\cup Z\rangle$ is Morita equivalent to $T(Q^\mathrm{b}, \mathcal{M})/I$, where $I=\langle I_{\alpha_1\beta_1},I_{\alpha_2\beta_2},\dots, I_{\alpha_r\beta_r}\rangle$. 
		
		By the construction of $Q^\mathrm{b}$ and the assumption that $\xc_{\sigma}Q/\langle S\cup Z\rangle$ is a $\xc$-semilinear clannish algebra of gentle type, we have the following observations. For each $v\in Q_\xc$, $v$ is ordinarily gentle in $T(Q^\mathrm{b}, \mathcal{M})/I$, with $I_v=I_{\alpha_i\beta_i}$ if $v=s(\alpha_i)$ in $Q$ for some $\alpha_i\beta_i\in Z$ and $I_v=0$ otherwise. For each $v\in Q_\xr\cup Q_\xh$, $v$ appears in Table \ref{spgenv} and Table \ref{phi(ab)}. So $v$ is specially gentle in $T(Q^\mathrm{b}, \mathcal{M})/I$ with $I_v=I_{\alpha_i\beta_i}$ for some $\alpha_i\beta_i\in Z$. 
		
		Hence $\xc_{\sigma}Q/\langle S\cup Z\rangle$ is locally complexified-gentle of special type.
	\end{proof}
	
	We prove that a connected locally complexified-gentle algebra of uniform type or special type is Morita equivalent to some semilinear clannish algebra.
	
	Let $A$ be a locally complexified-gentle algebra of uniform type. By Proposition~\ref{cpm-gen of uni}, $A$ is Morita equivalent to some locally gentle algebra $\xr Q/\langle R \rangle$ or $\xh Q/\langle R \rangle$. So, we have the following result.
	
	\begin{prop}\label{oga is slca}
		Any locally complexified-gentle algebra of uniform type is a semilinear clannish algebra over $\xr$ or $\xh$. In particular, they are normally bounded, non-singular, and of semisimple type in the sense of \cite{BC2024}.
	\end{prop}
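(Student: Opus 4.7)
The plan is to reduce to the explicit form given by Proposition \ref{cpm-gen of uni} and then exhibit a locally gentle algebra over $\xr$ or $\xh$ as a semilinear clannish algebra with trivial semilinear data. By Proposition \ref{cpm-gen of uni}, any locally complexified-gentle algebra of uniform type is Morita equivalent to some $DQ/\langle R\rangle$, where $D\in\{\xr,\xh\}$, $Q$ is a finite quiver, and $R$ is a set of paths of length two in $Q$ such that the conditions (G1), (G2), (G3) of Definition~\ref{usual defn} hold. Since being a semilinear clannish algebra is preserved under Morita equivalence (it is a property of the Morita class, inherited via basic algebras), it suffices to verify the claim for $DQ/\langle R\rangle$ itself.

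Next I would fit $DQ/\langle R\rangle$ into the framework of \cite{BC2024} applied to the base division ring $D$. The natural choice is to take the twist $\sigma\colon Q_1\to\mathrm{Aut}(D)$ to be identically trivial, the set of special loops $\mathbb{S}$ to be empty (so the associated set $S$ of special loop relations is empty), and the set of path relations to be $Z:=R$. With these choices, the semilinear path algebra $D_\sigma Q$ collapses to the ordinary path algebra $DQ$, and $D_\sigma Q/\langle S\cup Z\rangle$ is exactly $DQ/\langle R\rangle$.

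I would then verify the defining conditions of a semilinear clannish algebra from \cite{BC2024} (the analogues of (G1), (G2), (G3) in Definition~\ref{csca of gentle}, adapted to a general base division ring). Condition (G1) on the number of arrows at each vertex is immediate from gentleness. Condition (G2), which requires that each arrow $\alpha$ has at most one arrow $\beta$ with $\alpha\beta\notin\langle R\rangle$ and at most one with $\alpha\beta\in\langle R\rangle$ (and similarly for compositions on the other side), is precisely the gentle condition (G2) of Definition~\ref{usual defn}, given that $R$ consists of paths of length two and is chosen so the remaining compositions are non-zero in the quotient. Condition (G3), that elements of $Z$ are paths of length two, holds by construction.

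Finally, the three adjectives \emph{normally bounded}, \emph{non-singular}, and \emph{of (semi)simple type} from \cite{BC2024} refer to hypotheses on the special loop polynomials and the quotient rings $D[x;\sigma]/\langle p(x)\rangle$ attached to each loop in $\mathbb{S}$. Since $\mathbb{S}=\emptyset$, all three conditions are vacuously satisfied. The main (minor) obstacle is simply to match our data against the precise formulation of these conditions in \cite{BC2024}; no substantive argument is needed because the trivial-data case falls inside each hypothesis automatically. Thus $DQ/\langle R\rangle$, and hence $A$, is a semilinear clannish algebra over $D$ of the stated kind.
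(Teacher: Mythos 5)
Your proposal is correct and follows essentially the same route as the paper, which derives the proposition directly from Proposition~\ref{cpm-gen of uni} by observing that a locally gentle algebra $DQ/\langle R\rangle$ over $D\in\{\xr,\xh\}$ is a semilinear clannish algebra with trivial twist and no special loops, so that the conditions of being normally bounded, non-singular and of semisimple type hold vacuously. The only point worth noting is that, as in the paper, the identification is really up to Morita equivalence (the uniform-type property is itself defined only up to Morita equivalence), which you handle the same way the paper implicitly does.
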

	
	Now we assume that $A$ is Morita equivalent to some locally complexified-gentle algebra $T(Q,\mathcal{M})/I$ of special type. From $T(Q,\mathcal{M})/I$, we construct two semilinear clannish algebras $$\xc_{\sigma}Q^{\mathrm{s}}/\langle S\cup Z\rangle\mbox{ and }\xc_{\sigma'}Q^{\mathrm{s}}/\langle S\cup Z'\rangle.$$
	As in Table \ref{spgenv} and Table \ref{phi(ab)}, we use the notation $\gamma$ and $\gamma^\im$ if $s(\gamma)=s(\gamma^\im)$, $t(\gamma)=t(\gamma^\im)$ and $\mathcal{M}(s(\gamma))=\mathcal{M}(t(\gamma))\neq\xc$ for some arrow $\gamma$ and we denote $Q_1^\im$ the set of all these $\gamma^\im$ in $Q_1$.

	\textbf{Construction of $\xc_{\sigma}Q^{\mathrm{s}}/\langle S\cup Z\rangle$ and $\xc_{\sigma'}Q^{\mathrm{s}}/\langle S\cup Z'\rangle$ from $T(Q,\mathcal{M})/I$}:

	\begin{enumerate}
		\item[(1)] Set $Q^{\mathrm{s}}_0=Q_0$ with $Q^s_{D}=\{i\in Q_0\,|\,\mathcal{M}(i)=D\}, D\in\{\xr,\xh,\xc\}$; \\
		and set $Q^{\mathrm{s}}_1=(Q_1\setminus Q_1^\im)\sqcup\{s_i:i\rightarrow i\;|\;i\in Q_0, \mathcal{M}(i)\neq\xc\}$.\\
		Hence $\mathbb{S}=\{s_i\,|\,i\in Q^s_{\xr}\cup Q^s_{\xh}\}$ and $S=\{s_i^2-e_i \,|\, i\in Q^s_{\xr}\}\cup\{s_i^2+e_i\,|\, i\in Q^s_{\xh}\}$.
		\item[(2)] Set $Z=\{\alpha\beta\,|\,\alpha,\beta\in Q_1, s(\alpha)=t(\beta)=v\in Q^s_{\xc} \mbox{ and } 0\neq I_v\subset \mathcal{M}(\alpha\beta)\}\\ \mbox{ }\qquad\quad\cup\{\alpha\beta\,|\,\alpha,\beta\in Q_1\setminus Q_1^\im, s(\alpha)=t(\beta)\in Q^s_{\xr}\cup Q^s_{\xh}\}.$
		\item[(2')] Set $Z'=\{\alpha\beta\,|\,\alpha,\beta\in Q_1, s(\alpha)=t(\beta)=v\in Q^s_{\xc} \mbox{ and } 0\neq I_v\subset \mathcal{M}(\alpha\beta)\}\\ \mbox{ }\cup\{\alpha s^p_v\beta\,|\,\alpha,\beta\in Q_1\setminus Q_1^\im, s(\alpha)=t(\beta)=v\notin Q^s_{\xc} \mbox{, and } I_v=I_v^p,p\in\{0,1\}\}.$
	\end{enumerate}

	We call a path $\alpha_1\alpha_2\cdots\alpha_n$ with arrows in $Q^\mathrm{s}_1\setminus\mathbb{S}$ a special path if $\alpha_i\neq\alpha_j$ for $i\neq j$ and $s(\alpha_i)\notin Q^{\mathrm{s}}_{\xc}$ for $1\leq i\leq n-1$. A special path is called maximal if it is not properly contained in another special path. For those maximal special paths whose underlying quivers are the same oriented cycle up to rotation, we fix one as a representative. Since $T(Q,\mathcal{M})/I$ is locally complexified-gentle of special type, each ordinary arrow $\alpha\in Q_1^{\mathrm{s}}\setminus\mathbb{S}$ either is given by an arrow in $T(Q,\mathcal{M})/I$ with $\mathcal{M}(s(\alpha))=\mathcal{M}(t(\alpha))=\xc$ or is contained uniquely in one maximal special path.

	\begin{enumerate}	
		\item[(3)] We give a twist $\sigma$ as below: \\
		For each special loop $s$, $\sigma_{s}=\overline{(?)}$.\\
		For each ordinary arrow $\alpha$ with $\mathcal{M}(\alpha)=\xc$, $\sigma_{\alpha}=\mathrm{Id}$.\\
		For each ordinary arrow $\alpha$ with $\mathcal{M}(\alpha)=\overline{\xc}$, $\sigma_{\alpha}=\overline{(?)}$.\\
		For each maximal special path $\alpha_1\alpha_2\cdots\alpha_n$ which is an oriented cycle, set $\sigma_{\alpha_i}=\overline{(?)}^{p_i}$, where $p_i\in\{0,1\}$ with $I^{p_i}_{s(\alpha_i)}\subseteq I$, for $1\leq i\leq n$.\\ 
		For each maximal special path $\alpha_1\alpha_2\cdots\alpha_n$ which is not an oriented cycle, set $\sigma_{\alpha_i}=\overline{(?)}^{p_i}$, where $p_i\in\{0,1\}$ such that $I^{p_i}_{s(\alpha_i)}\subseteq I$ for $1\leq i\leq n-1$ and set $\sigma_{\alpha_n}=\mathrm{Id}$.
		
		\item[(3')] We give a twist $\sigma'$ as below: \\
		For each special loop $s$, $\sigma'_{s}=\overline{(?)}$.\\
		For each ordinary arrow $\alpha$ with $\mathcal{M}(\alpha)=\xc$, $\sigma'_{\alpha}=\mathrm{Id}$.\\
		For each ordinary arrow $\alpha$ with $\mathcal{M}(\alpha)=\overline{\xc}$, $\sigma'_{\alpha}=\overline{(?)}$.\\
		For each ordinary arrow $\alpha$ contained in some special path, set $\sigma'_{\alpha}=\mathrm{Id}$.
	\end{enumerate}
	
	\begin{prop}\label{Qsissemicgentle}
		If $T(Q,\mathcal{M})/I$ is locally complexified-gentle of special type, then 
		\begin{enumerate}
			\item[\rm (1)] the algebra $T(Q,\mathcal{M})/I$ is Morita equivalent to $\xc_{\sigma'}Q^{\mathrm{s}}/\langle S\cup Z'\rangle$;
			\item[\rm (2)] the algebras $\xc_{\sigma'}Q^{\mathrm{s}}/\langle S\cup Z'\rangle$ and $\xc_{\sigma}Q^{\mathrm{s}}/\langle S\cup Z\rangle$ are isomorphic;
			\item[\rm (3)] the algebra $\xc_{\sigma}Q^{\mathrm{s}}/\langle S\cup Z\rangle$ is a $\xc$-semilinear clannish algebra of gentle type.
		\end{enumerate}
	\end{prop}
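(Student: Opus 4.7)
The plan is to prove (3) first (so the second algebra makes sense as an object of the target class), then (1) via the modulated-quiver machinery of Section~4.2, and finally (2) by a change-of-variables isomorphism built from the remark at the end of Section~4.3.

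For (3), I would check (G1)--(G3) of Definition~\ref{csca of gentle} directly from the construction of $Q^\mathrm{s}$, $\mathbb{S}$ and $Z$. Condition (G3) holds because every element of $Z$ is a length-two path by definition. For (G1), each $v\in Q^\mathrm{s}_{\xc}$ satisfies $|v^\pm|\leq 2$ since it is ordinarily gentle with $\xc$ in $T(Q,\mathcal{M})/I$ and no arrows at $v$ are removed; each $v\in Q^\mathrm{s}_{\xr}\cup Q^\mathrm{s}_{\xh}$ appears as a specially gentle vertex in one of the 18 cases of Table~\ref{spgenv}, so after deleting the $Q_1^\im$--arrows and adding the single loop $s_v$ one has $|v^\pm|\leq 2$. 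Condition (G2) reduces to the fact that each vertex $v$ contributes at most one element of $Z$ (coming from $I_v$), and special loops occur neither at the start nor at the end of elements of $Z$.

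For (1), the key observation is that the two constructions $(Q,\mathcal{M})\mapsto Q^\mathrm{s}$ and $Q^\mathrm{s}\mapsto (Q^\mathrm{s})^\mathrm{b}$ are mutually inverse when applied to a $T(Q,\mathcal{M})/I$ of special type: the splitting of an arrow into $\alpha,\alpha^\im$ in $Q^\mathrm{s}\rightsquigarrow (Q^\mathrm{s})^\mathrm{b}$ exactly undoes the deletion of $Q_1^\im$, and the modulation on $(Q^\mathrm{s})^\mathrm{b}$ agrees with $\mathcal{M}$ (here the choice $\sigma'_\alpha\in\{\mathrm{Id},\overline{(?)}\}$ for $\xc$--$\xc$ arrows is dictated precisely so that $\mathcal{M}^\mathrm{s}(\alpha)\in\{\xc,\overline{\xc}\}$ matches $\mathcal{M}(\alpha)$). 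Applying Theorem~\ref{basic form of cq/sz} to $\xc_{\sigma'}Q^\mathrm{s}/\langle S\cup Z'\rangle$ then yields $T(Q,\mathcal{M})/J$ for some ideal $J$, and it remains to prove $J=I$. This follows by comparing generators bimodule-by-bimodule: for $v\in Q^\mathrm{s}_{\xc}$ with $I_v\neq 0$ the relation $\alpha\beta\in Z'$ pushes forward under $\Phi$ to a generator of $\mathcal{M}(\alpha\beta)=I_v$, while for $v\in Q^\mathrm{s}_{\xr}\cup Q^\mathrm{s}_{\xh}$ the relation $\alpha s_v^{p_v}\beta\in Z'$ pushes forward to the generator of $I_{\alpha\beta}^{p_v}$ listed in the first row of Table~\ref{phi(ab)}, which by Remark~\ref{4.1=3.1} equals $I_v^{p_v}=I_v$.

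For (2), I would construct an $\xr$-algebra homomorphism $f\colon \xc_{\sigma'}Q^\mathrm{s}\to \xc_\sigma Q^\mathrm{s}$ as the identity on vertices, special loops, and arrows not sitting in any maximal special path, and by $f(\alpha_i)=s_{t(\alpha_i)}^{m_i}\alpha_i s_{s(\alpha_i)}^{p_i-m_i}$ on the arrows of a maximal special path $\alpha_1\cdots\alpha_n$, with exponents $m_i\in\{0,1\}$ chosen along the path so that (a) the compatibility $\alpha c=\sigma_\alpha(c)\alpha$ in the target matches $\alpha c=c\alpha$ in the source, using that conjugation by $s_v$ implements $\overline{(?)}$; and (b) for each intermediate vertex $v_i$, the sum of exponents of $s_{v_i}$ produced by $f(\alpha_i)f(\alpha_{i+1})$ modulo $2$ equals $p_{v_i}$, so that $f(\alpha_i\alpha_{i+1})$ equals, up to a $\pm 1$ sign from $s_{v_i}^2=\pm e_{v_i}$, the element $s_{t(\alpha_i)}^{m_i}\alpha_i s_{v_i}^{p_{v_i}}\alpha_{i+1}s_{s(\alpha_{i+1})}^{p_{i+1}-m_{i+1}}$. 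Then $f$ descends to an isomorphism on the quotients by $\langle S\cup Z'\rangle$ and $\langle S\cup Z\rangle$ respectively, with inverse constructed symmetrically.

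The main obstacle is making the choice of exponents $m_i$ in (2) consistent along a special path. For a non-cyclic maximal special path this is routine: fix $m_n$ by the terminal condition $\sigma_{\alpha_n}=\mathrm{Id}$ and propagate $m_i$ from the prescribed $\sigma_{\alpha_i}=\overline{(?)}^{p_i}$. For a maximal special path that forms an oriented cycle, however, one must verify that the propagation closes up, so that the total ``monodromy'' of the $m_i$'s around the cycle is compatible with $s_v^2=\pm e_v$; this is the one computation where the distinction between $\xr$-- and $\xh$--specially-gentle vertices (and the associated signs in Table~\ref{phi(ab)}) must be tracked carefully.
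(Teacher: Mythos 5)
Your treatment of (1) and (3) follows the paper's proof essentially verbatim: the identification $((Q^{\mathrm{s}})^{\mathrm{b}},\mathcal{M}')=(Q,\mathcal{M})$, the application of Theorem~\ref{basic form of cq/sz}, and the generator-by-generator comparison $\Phi(\langle Z'\rangle)=I$ via Table~\ref{phi(ab)} and Remark~\ref{4.1=3.1} are exactly the paper's argument, and (3) is indeed a routine check of Definition~\ref{csca of gentle}.

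For (2), however, you have over-parametrized the change of variables and then flagged, without resolving, a ``monodromy'' problem for cyclic maximal special paths. This is the one real gap in the write-up, and it dissolves once you compute your own constraint (b). With $\sigma_{\alpha_i}=\overline{(?)}^{p_i}$ and $p_i$ determined by $I^{p_i}_{s(\alpha_i)}\subseteq I$ (so $p_i$ already \emph{is} the exponent $p_{v_i}$ appearing in the $Z'$-relation at the middle vertex $v_i=s(\alpha_i)=t(\alpha_{i+1})$), the requirement that $f(\alpha_i)f(\alpha_{i+1})$ produce $s_{v_i}^{p_{v_i}}$ at the middle vertex reads $(p_i-m_i)+m_{i+1}\equiv p_i \pmod 2$, i.e.\ $m_{i+1}\equiv m_i$. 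So the $m_i$ are forced to be constant along any special path, and the constant choice $m_i=0$ works uniformly for cycles and non-cycles; there is nothing to ``close up.'' This is precisely the paper's map, which is defined globally arrow-by-arrow as $x\mapsto x\,s_{s(x)}^{p}$ for $\sigma_x=\overline{(?)}^{p}$ (identity elsewhere), sending $\lceil\alpha\beta\rceil$ to $\lceil\alpha s_{s(\alpha)}^{p}\beta s_{s(\beta)}^{q}\rceil$; the trailing loop power is a unit (since $s^2=\pm e$), so the ideals $\langle Z\rangle$ and $\langle Z'\rangle$ match, and the signs from $s_v^2=\pm e_v$ at $\xr$- versus $\xh$-vertices never enter because no two inserted loops at the same vertex are ever multiplied. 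Your condition (a) is likewise automatic for any choice of $m_i$, as a one-line computation with $\overline{(?)}^{2p}=\mathrm{Id}$ shows. In short: replace the two-sided insertion by the one-sided one and part (2) is complete.
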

	
	\begin{proof}
		(1). From $\xc_{\sigma'}Q^{\mathrm{s}}/\langle S\rangle$ we construct $T((Q^{\mathrm{s}})^{\mathrm{b}},\mathcal{M}')$ as in Section 3.1. Immediately, we have $((Q^{\mathrm{s}})^{\mathrm{b}},\mathcal{M}')=(Q,\mathcal{M})$. By Theorem~\ref{CQS and TQM},  $\Phi:\epsilon(\xc_{\sigma'}Q^{\mathrm{s}}/\langle S\rangle)\epsilon\rightarrow T(Q,\mathcal{M})$ induces $$\epsilon(\xc_{\sigma'}Q^{\mathrm{s}}/\langle S\cup Z'\rangle)\epsilon\simeq T(Q,\mathcal{M})/\Phi(\langle \lceil p\rceil\,|\,p\in Z'\rangle).$$ 
		By the construction of $Z'$ and Remark \ref{asb to I1} (2), we have $\Phi(\langle \lceil p\rceil\,|\,p\in Z'\rangle)=I$. Hence $\xc_{\sigma'}Q^{\mathrm{s}}/\langle S\cup Z'\rangle$ is Morita equivalent to $T(Q,\mathcal{M})/I$.
		
		(2). Consider two $\xc$-linear maps $f_1:\xc_{\sigma}Q^{\mathrm{s}}\rightarrow\xc_{\sigma'}Q^{\mathrm{s}}$ and $f_2:\xc_{\sigma'}Q^{\mathrm{s}}\rightarrow\xc_{\sigma}Q^{\mathrm{s}}$, both induced by 
		$$f(x)=\begin{cases}
			x &\mbox{ if }x\in \{e_i\,|\,i\in Q^{\mathrm{s}}_0\}\cup\mathbb{S}\cup\{x\in Q^{\mathrm{s}}_1\,|\, s(x)\in Q^{\mathrm{s}}_{\xc}\}\\
			x s_{s(x)}^{p} &\mbox{ if }x\in Q^{\mathrm{s}}_1\setminus\mathbb{S}, s(x)\notin Q^{\mathrm{s}}_{\xc}\mbox{ with }\sigma_{x}=\overline{(?)}^p, p\in\{0,1\}
		\end{cases}.$$
		They are well-defined because $\sigma$ and $\sigma'$ are different only on those arrows $\alpha\in Q^{\mathrm{s}}_1\setminus\mathbb{S}$ with $\sigma_{\alpha}=\overline{(?)}$. Further, $f_1$ and $f_2$ induce two isomorphisms of $\xc$-semilinear algebras $g_1:\xc_{\sigma}Q^{\mathrm{s}}/\langle S\rangle\rightarrow\xc_{\sigma'}Q^{\mathrm{s}}/\langle S\rangle$ and $g_2:\xc_{\sigma'}Q^{\mathrm{s}}/\langle S\rangle\rightarrow\xc_{\sigma}Q^{\mathrm{s}}/\langle S\rangle$, which are inverse to each other.
		
		For each $\alpha\beta$ in $Z$, we have $g_1(\lceil\alpha\beta\rceil)=\lceil\alpha s_{s(\alpha)}^p\beta s_{s(\beta)}^q\rceil$, where 
		$$p=\begin{cases}
			1 &\mbox{ if } s(\alpha)\notin Q^{\mathrm{s}}_{\xc}\mbox{ with }\sigma_{\alpha}=\overline{(?)}\\
			0 &\mbox{ otherwise }		
		\end{cases}.$$
		Then $g_1(\langle Z\rangle)=\langle Z'\rangle$. Therefore, $g_1$ induces an isomorphism $$\xc_{\sigma}Q^{\mathrm{s}}/\langle S\cup Z\rangle\simeq\xc_{\sigma'}Q^{\mathrm{s}}/\langle S\cup Z'\rangle.$$
		
		(3). By the assumption on $T(Q,\mathcal{M})/I$ and the construction of $\xc_{\sigma}Q^{\mathrm{s}}/\langle S\cup Z\rangle$, it is routine to check that $\xc_{\sigma}Q^{\mathrm{s}}/\langle S\cup Z\rangle$ is a $\xc$-semilinear clannish algebra of gentle type.
	\end{proof}
	
	By Proposition \ref{semicgenissprgen} and Proposition \ref{Qsissemicgentle}, we have 
	\begin{thm}\label{main 2}
		An $\xr$-algebra is locally complexified-gentle of special type if and only if it is Morita equivalent to a $\xc$-semilinear clannish algebra of gentle type. Therefore, locally complexified-gentle algebras of uniform or special type are semilinear clannish algebras up to Morita equivalence.
	\end{thm}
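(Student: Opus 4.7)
The plan is to assemble Theorem~\ref{main 2} directly from the three propositions established earlier in this subsection, so the proof reduces to a bookkeeping argument rather than any new computation. First I would dispatch the ``if'' direction of the equivalence: if $A$ is Morita equivalent to some $\xc$-semilinear clannish algebra of gentle type $\xc_\sigma Q/\langle S\cup Z\rangle$, then Proposition~\ref{semicgenissprgen} shows the latter is locally complexified-gentle of special type, and since (locally) complexified-gentle algebras are closed under Morita equivalence by Proposition~\ref{prop of cmp-gen alg}(1), so is $A$. For this step I would just invoke Proposition~\ref{semicgenissprgen} and the Morita-closure of Definition~\ref{two types of cmp-gentle}; no extra work is needed.

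Next I would handle the ``only if'' direction. Suppose $A$ is locally complexified-gentle of special type; by Definition~\ref{two types of cmp-gentle} it is Morita equivalent to some $T(Q,\mathcal{M})/I$ satisfying condition (3) of Theorem~\ref{main}. Applying Proposition~\ref{Qsissemicgentle}(1) yields that $T(Q,\mathcal{M})/I$ is Morita equivalent to $\xc_{\sigma'}Q^{\mathrm{s}}/\langle S\cup Z'\rangle$, and combining (2) and (3) of the same proposition identifies this with the $\xc$-semilinear clannish algebra of gentle type $\xc_{\sigma}Q^{\mathrm{s}}/\langle S\cup Z\rangle$. Chaining the Morita equivalences $A\sim T(Q,\mathcal{M})/I\sim \xc_\sigma Q^{\mathrm{s}}/\langle S\cup Z\rangle$ finishes the equivalence.

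For the concluding ``therefore'' sentence, I would separately treat the uniform case: by Proposition~\ref{cpm-gen of uni}, a locally complexified-gentle algebra of uniform type is, up to Morita equivalence, a locally gentle algebra $DQ/\langle R\rangle$ with $D\in\{\xr,\xh\}$, and Proposition~\ref{oga is slca} identifies such algebras as semilinear clannish (over $\xr$ or $\xh$). Combining this with the special-type case already proved gives the final statement that locally complexified-gentle algebras of uniform or special type are all semilinear clannish up to Morita equivalence.

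The proof is essentially a concatenation of prior results, so I do not anticipate a genuine obstacle here; the only care needed is to make explicit that Morita equivalence is preserved under both complexification and the constructions $T(Q,\mathcal{M})/I\leftrightarrow \xc_\sigma Q^\mathrm{s}/\langle S\cup Z\rangle$, which was already arranged in Sections~4.2--4.3. The main conceptual content was absorbed into Proposition~\ref{Qsissemicgentle}, whose construction of the twist $\sigma$ from the bimodule decomposition $I_v=I_v^0$ or $I_v^1$ is the real heart of the result; at the level of Theorem~\ref{main 2} itself nothing beyond citation remains.
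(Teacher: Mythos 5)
Your proposal is correct and matches the paper's proof, which simply cites Proposition~\ref{semicgenissprgen} for the ``if'' direction and Proposition~\ref{Qsissemicgentle} for the ``only if'' direction, with Proposition~\ref{oga is slca} covering the uniform-type part of the concluding sentence. Your additional explicit appeal to Morita-closure is a reasonable piece of bookkeeping the paper leaves implicit.
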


    \begin{exm}
        We take $u=v=w$ in Example \ref{unimform diff from special}. In this case, we identify $\alpha=\beta$ and $\alpha^\im=\beta^\im$. Let $I'=\langle \alpha^2+(\alpha^\im)^2,\alpha\alpha^\im-\alpha^\im\alpha\rangle$. Then $$T(Q,\mathcal{M})/\langle 1_\alpha\otimes1_\alpha+1_{\alpha^\im}\otimes1_{\alpha^\im},1_\alpha\otimes1_{\alpha^\im}-1_{\alpha^\im}\otimes1_\alpha\rangle\simeq\xr Q/I'$$ is a locally complexified-gentle algebra of special type. By the constructions in the proof of Proposition \ref{Qsissemicgentle}, it is Morita equivalent to the algebra $\xc_{\sigma'}Q^{\mathrm{s}}/\langle S\cup Z'\rangle$ given by $$Q^\mathrm{s}=\makecell{\xymatrix{ v\ar@(dl,ul)^\alpha\ar@(dr,ur)_{s_v}}}, S=\{s_v^2-e_v\}, \sigma'_{\alpha}=\mathrm{Id} \mbox{ and } Z'=\{\alpha s_v\alpha\}.$$ And $\xc_{\sigma'}Q^{\mathrm{s}}/\langle S\cup Z'\rangle$ is isomorphic to a $\xc$-semilinear clannish algebra $\xc_{\sigma}Q^{\mathrm{s}}/\langle S\cup Z\rangle$ given by $$Q^\mathrm{s}=\makecell{\xymatrix{ v\ar@(dl,ul)^\alpha\ar@(dr,ur)_{s_v}}}, S=\{s_v^2-e_v\}, \sigma_{\alpha}=\overline{(?)} \mbox{ and } Z=\{\alpha^2\}.$$

        Notice that $\xr Q/I'\simeq\xr[x,y]/\langle x^2+y^2\rangle$, which was mentioned as an example in the penultimate paragraph of \cite[Introduction]{BC2024}. We point out that the condition ``\,$t^2=-1$'' there should be replaced by ``\,$t^2=1$'', according to the above discussion. The condition $t^2=-1$ gives a $\xc$-semilinear clannish algebra $\xc_{\sigma}Q^{\mathrm{s}}/\langle S'\cup Z\rangle$ provided by $$Q^\mathrm{s}=\makecell{\xymatrix{ v\ar@(dl,ul)^\alpha\ar@(dr,ur)_{s_v}}}, S'=\{s_v^2+e_v\}, \sigma_{\alpha}=\overline{(?)} \mbox{ and } Z=\{\alpha^2\}.$$ It is Morita equivalent (even isomorphic) to $\xh Q/\langle \km\alpha^2+\km(\alpha^\im)^2-\lm\alpha\alpha^\im+\lm\alpha^\im\alpha\rangle$; see $r_1$ for case $\xh\xh\xh$ in Table \ref{spgenv} and Example \ref{unimform diff from special}. This is an algebra equals $\xh Q/\langle \alpha^2+(\alpha^\im)^2, \alpha\alpha^\im-\alpha^\im\alpha\rangle$, which is isomorphic to $\xh[x,y]/\langle x^2+y^2\rangle$.
    \end{exm}

	\begin{rem}
		One can analogously define $\xc$-semilinear clannish algebras of other types, say string type. But the situation becomes different. Example \ref{r-string alg} gives an $\xr$-algebra which becomes a string algebra after complexification. But it is not a modulated quiver presentation of any $\xc$-semilinear clannish algebra, otherwise there should be an extra arrow $\gamma^\im$ in $Q$ by the construction in Section 4.2.
	\end{rem}
	
	\medskip
	
	\subsection{Representations of locally complexified-gentle algebras of uniform and special types}
	
	According to Proposition \ref{oga is slca} and Proposition \ref{Qsissemicgentle}, locally complexified-gentle algebras of uniform or special type are semilinear clannish algebras up to Morita equivalence. Moreover, they are normally bounded, non-singular, and of semisimple type, whose finite-dimensional modules are classified in \cite{BC2024}. Hence, we can classify modules over locally complexified-gentle algebras of these two types.
	
	If $T(Q,\mathcal{M})/I$ is locally complexified-gentle of uniform type with $\xr$ or $\xh$, it is a semilinear clannish (even gentle) algebra via a simple isomorphism. So we can directly apply the main theorem in \cite{BC2024} to classify finite-dimensional modules via ``strings and bands''.  
	
	If $T(Q,\mathcal{M})/I$ is locally complexified-gentle of special type, we have equivalences of module categories
	$$T(Q,\mathcal{M})/I\mbox{-}\mathrm{mod}\simeq\xc_{\sigma'}Q^{\mathrm{s}}/\langle S\cup Z'\rangle\mbox{-}\mathrm{mod}\simeq\xc_{\sigma}Q^{\mathrm{s}}/\langle S\cup Z\rangle\mbox{-}\mathrm{mod},$$
	where $\xc_{\sigma'}Q^{\mathrm{s}}/\langle S\cup Z'\rangle$ and $\xc_{\sigma}Q^{\mathrm{s}}/\langle S\cup Z\rangle$ are constructed as in Proposition \ref{Qsissemicgentle}. We apply the main theorem in \cite{BC2024} to give all finite-dimensional $\xc_{\sigma}Q^{\mathrm{s}}/\langle S\cup Z\rangle$-modules. Through the isomorphism $\xc_{\sigma'}Q^{\mathrm{s}}/\langle S\cup Z'\rangle\simeq\xc_{\sigma}Q^{\mathrm{s}}/\langle S\cup Z\rangle$ in Proposition~\ref{Qsissemicgentle} and the isomorphism $\epsilon\xc_{\sigma}Q/\langle S\cup Z \rangle\epsilon\simeq T(Q^\mathrm{b},\mathcal{M})/I$ in Theorem~\ref{basic form of cq/sz}, the isomorphisms between the module categories can be described concretely. Therefore, it is possible to classify all finite-dimensional $T(Q,\mathcal{M})/I$-modules. However, it may be cumbersome, as shown in \cite{DR1978}. 
	
	\subsection*{Acknowledgements}
	The authors thank Professor Xiao-Wu Chen for his encouragement and advice. This work is supported by the National Natural Science Foundation of China (Grant Nos. 12171297, 12201166, 12461006), the Guizhou Provincial Basic Research Program (Grant No. ZD[2025]085), the Fundamental Research Funds for the Central Universities and the Guizhou Provincial Key Laboratory of Applied Mathematics and Computing Power \& Algorithms  (No. QianKeHePingTai ZSYS[2025]039).

	\bibliographystyle{plain}
	\bibliography{ref}

\end{document}